\newcommand{\eps}{\varepsilon}
\tikzstyle{dot}=[circle,fill,black,inner sep=1pt]
\tikzset{
  on each segment/.style={
    decorate,
    decoration={
      show path construction,
      moveto code={},
      lineto code={
        \path [#1]
        (\tikzinputsegmentfirst) -- (\tikzinputsegmentlast);
      },
      curveto code={
        \path [#1] (\tikzinputsegmentfirst)
        .. controls
        (\tikzinputsegmentsupporta) and (\tikzinputsegmentsupportb)
        ..
        (\tikzinputsegmentlast);
      },
      closepath code={
        \path [#1]
        (\tikzinputsegmentfirst) -- (\tikzinputsegmentlast);
      },
    },
  },
  mid arrow/.style={postaction={decorate,decoration={
        markings,
        mark=at position .5 with {\arrow[#1]{stealth}}
      }}},
  early arrow/.style={postaction={decorate,decoration={
        markings,
        mark=at position .2 with {\arrow[#1]{stealth}}
      }}},
}
\tikzstyle{int}=[draw, fill=blue!15, minimum size=2em]
\tikzstyle{init} = [pin edge={to-,thin,black}]
\def\alternatecolorred{%
    \pgfkeysalso{red}%
    \global\let\alternatecolor\alternatecolorblue 
}
\def\alternatecolorblue{%
    \pgfkeysalso{blue}%
    \global\let\alternatecolor\alternatecolorred 
}
\newcommand{\altred}{\let\alternatecolor\alternatecolorred 
\tikzset{every edge/.append code = {%
    \global\let\currenttarget\tikztotarget 
    \pgfkeysalso{append after command={(\currenttarget)}}
			\alternatecolor
}}
}
\newcommand{\altblue}{\let\alternatecolor\alternatecolorblue 
\tikzset{every edge/.append code = {%
    \global\let\currenttarget\tikztotarget 
    \pgfkeysalso{append after command={(\currenttarget)}}
			\alternatecolor
}}
}
\tikzstyle{vertexdot}=[circle, draw, fill=black, minimum size=3,inner sep=0pt]
\newtheorem{theorem}{Theorem}
\newtheorem{lemma}{Lemma}
\newtheorem{proposition}{Proposition}
\newcommand{\stepa}[1]{\overset{\rm (a)}{#1}}
\newcommand{\iiddistr}{{\stackrel{\text{\iid}}{\sim}}}
\newcommand{\ones}{\mathbf 1}
\newcommand{\reals}{{\mathbb{R}}}
\newcommand{\integers}{{\mathbb{Z}}}
\newcommand{\bfF}{\mathbf F}
\newcommand{\diff}{{\rm d}}
\newcommand{\Expect}{\mathbb{E}}
\newcommand{\expect}{\mathbb{E}}
\newcommand{\Prob}{\mathbb{P}}
\newcommand{\prob}[1]{ \mathbb{P}\left\{ #1 \right\} }
\newcommand{\var}{\mathsf{var}}
\newcommand{\Cov}{\text{Cov}}
\newcommand{\Bern}{{\rm Bern}}
\newcommand{\ie}{i.e.\xspace}
\newcommand{\iid}{i.i.d.\xspace}
\newcommand{\pth}[1]{\left( #1 \right)}
\newcommand{\qth}[1]{\left[ #1 \right]}
\newcommand{\sth}[1]{\left\{ #1 \right\}}
\newcommand{\norm}[1]{\left\|{#1} \right\|}
\newcommand{\lnorm}[2]{\left\|{#1} \right\|_{{#2}}}
\newcommand{\Fnorm}[1]{\lnorm{#1}{\rm F}}
\newcommand{\fnorm}[1]{\|#1\|_{\rm F}}
\newcommand{\opnorm}[1]{\left\| #1 \right\|_{\rm op}}
\newcommand{\Opnorm}[1]{\| #1 \|_{\rm op}}
\newcommand{\iprod}[2]{\left \langle #1, #2 \right\rangle}
\newcommand{\Iprod}[2]{\langle #1, #2 \rangle}
\newcommand{\indc}[1]{{\mathbf{1}_{\left\{{#1}\right\}}}}
\newcommand{\diag}{\mathsf{diag}}
\newcommand{\tA}{{\widetilde{A}}}
\newcommand{\tB}{{\widetilde{B}}}
\newcommand{\tH}{{\widetilde{H}}}
\newcommand{\tJ}{{\widetilde{J}}}
\newcommand{\tT}{{\widetilde{T}}}
\newcommand{\tX}{{\widetilde{X}}}
\newcommand{\tY}{{\widetilde{Y}}}
\newcommand{\tZ}{{\widetilde{Z}}}
\newcommand{\calE}{{\mathcal{E}}}
\newcommand{\calN}{{\mathcal{N}}}
\newcommand{\calO}{{\mathcal{O}}}
\newcommand{\calP}{{\mathcal{P}}}
\newcommand{\calQ}{{\mathcal{Q}}}
\newcommand{\frakc}{{\mathfrak{c}}}
\DeclareMathAlphabet{\varmathbb}{U}{bbold}{m}{n}
\newcommand{\argmax}{\mathrm{argmax}}
\renewcommand{\hat}{\widehat}
\renewcommand{\tilde}{\widetilde}
\newcommand{\R}{\mathbb{R}}
\newcommand{\C}{\mathbb{C}}
\DeclareMathOperator{\Tr}{Tr}
\newcommand{\vecc}{\mathsf{vec}}
\newcommand{\overlap}{\mathsf{overlap}}
\newcommand{\ii}{\mathrm{i}}
\newcommand{\fS}{\mathfrak{S}} 
\newcommand{\ER}{Erd\H{o}s-R\'{e}nyi\xspace}
\begin{document}

\pgfdeclarelayer{background}
\pgfdeclarelayer{foreground}
\pgfsetlayers{background,main,foreground}

\title{Random Graph Matching in Geometric Models: the Case of Complete Graphs}

\author{Haoyu Wang, Yihong Wu, Jiaming Xu, and Israel Yolou\thanks{
H.\ Wang is with the Department of Mathematics, Yale University, New Haven, USA, \texttt{haoyu.wang@yale.edu}.
Y.\ Wu is with the Department of Statistics and Data Science, Yale University, New Haven, USA, \texttt{yihong.wu@yale.edu}.
J.\ Xu is with The Fuqua School of Business, Duke University, Durham NC, USA, \texttt{jx77@duke.edu}.
I.\ Yolou is with the Departments of Mathematics and Computer Science, Yale University, New Haven, USA, \texttt{israel.yolou@yale.edu}.
}}

\maketitle

\begin{abstract}

This paper studies the problem of matching two complete graphs with edge weights correlated through latent geometries, extending a recent line of research on random graph matching with independent edge weights to geometric models. 
Specifically, given a random permutation $\pi^*$ on $[n]$ and $n$ iid pairs of correlated Gaussian vectors $\{X_{\pi^*(i)}, Y_i\}$ in $\reals^d$ with noise parameter $\sigma$, the edge weights are given by $A_{ij}=\kappa(X_i,X_j)$ and $B_{ij}=\kappa(Y_i,Y_j)$
for some link function $\kappa$. The goal is to recover the hidden vertex correspondence $\pi^*$ based on the observation of $A$ and $B$.
We focus on the dot-product model with $\kappa(x,y)=\langle x, y \rangle$ and Euclidean distance model with $\kappa(x,y)=\|x-y\|^2$, in the low-dimensional regime of $d=o(\log n)$ wherein the underlying geometric structures are most evident. We derive an approximate maximum likelihood estimator, which provably achieves, with high probability, perfect recovery of $\pi^*$  when $\sigma=o(n^{-2/d})$ and almost perfect recovery with a vanishing fraction of errors when $\sigma=o(n^{-1/d})$. 
Furthermore, these conditions are shown to be information-theoretically optimal even when the latent coordinates $\{X_i\}$ and $\{Y_i\}$ are observed, complementing the recent results of \cite{dai2019database} and \cite{kunisky2022strong} in geometric models of the planted bipartite matching problem. 
As a side discovery, we show that the celebrated  spectral algorithm of \cite{umeyama1988eigendecomposition} emerges as a further approximation to the maximum likelihood  in the geometric model.

\end{abstract}

\section{Introduction}
	\label{sec:intro}
	

	Graph matching (or network alignment) refers to finding the best vertex correspondence between two graphs that maximizes the total number of common edges.
	While this problem, as an instance of quadratic assignment problem, is computationally intractable in the worst case, significant headways, both information-theoretic and algorithmic, have been achieved in the average-case analysis under meaningful statistical models \cite{cullina2016improved,cullina2017exact,ding2018efficient,barak2019nearly,FMWX19a,FMWX19b,Hall2020partial,wu2021settling,ganassali2020tree,ganassali2021correlation,mao2021random,mao2021exact}.
One of the most popular models is the \emph{correlated \ER graph} model~\cite{pedarsani2011privacy}, where both observed graphs are \ER graphs with edges correlated 
	through a latent vertex matching; more generally, in the \emph{correlated Wigner} model, the observations are two weighted graph with correlated edge weights (e.g.~Gaussians \cite{ding2018efficient,dai2019database,FMWX19a,ganassali2020sharp}).
	Despite their simplicity, these models inspired a number of new algorithms that achieve strong performance both theoretically and practically \cite{ding2018efficient,FMWX19a,FMWX19b,ganassali2020tree,ganassali2021correlation,mao2021random,mao2021exact}. Nevertheless, one of the major limitations of models with independent edges is that they fail to capture graphs with spatial structure~\cite{armiti2014geometric}, such as those arising in computer vision datasets (e.g.~mesh graphs obtained by triangulating 3D shapes \cite{lahner2016shrec}).
	In contrast to \ER-style model with iid edges, \emph{geometric graph models}, such as random dot-product graphs and random geometric graphs, take into account the latent geometry by embedding each node in a Euclidean space and determines edge connection between two nodes by the proximity of their geographical location. 
	While the coordinates are typically assumed to be independent (e.g.~Gaussians or uniform over spheres or hypercubes), the edges or edge weights are now dependent.
	The main objective for the present paper is to study graph matching in correlated geometric graph models, where the network correlation is due to that of the latent coordinates.
	
	\subsection{Model}
	\label{sec:model}
	
	Given two point clouds $\{X_1,\ldots,X_n\}$ and $\{Y_1,\ldots,Y_n\}$ in $\reals^d$, we construct 
	two weighted graphs on the vertex set $[n]$ with weighted adjacency matrices $A$ and $B$ as follows.
	For each $i,j$, let $A_{ij} \overset{\rm ind}{\sim} W(\cdot|X_i,X_j)$ and $B_{ij} \overset{\rm ind}{\sim} W(\cdot|Y_i,Y_j)$, 
	for some probability transition kernel $W$.
	The coordinates are correlated through a latent matching as follows:
Consider a Gaussian model
\[
Y_i= X_{\pi^*(i)} + \sigma Z_i, \quad i=1,\ldots,n,
\]
where $X_i, Z_i$'s are iid $\calN(0, I_d)$ vectors and $ \pi^* $ is uniform on $ S_n $, the set of all permutations on $[n]$.
In matrix form, we have
\begin{equation}
Y=\Pi^* X + \sigma Z,
\label{eq:model-LAP}
\end{equation}
where $ X,Y,Z \in \R^{n \times d} $ are matrices whose rows are $ X_i $'s, $ Y_i $'s and $ Z_i $'s respectively, $ \Pi^* \in \fS_n$ denotes the permutation matrix corresponding to $ \pi^* $, and $\fS_n$ is the collection of all permutation matrices.
Given the observation $A$ and $B$, the goal is to recover the latent correspondence $\pi^*$.
	
	Of particular interest are the following special cases:
	\begin{itemize}
	
		\item \emph{Dot-product model}: The observations are complete graphs with pairwise inner products as edge weights, namely, $A_{ij} = \Iprod{X_i}{X_j}$ and $B_{ij} = \Iprod{Y_i}{Y_j}$. As such, the weighted adjacency matrices are $ A=XX^\top $ and $ B=YY^\top $, both Wishart matrices. 
		It is clear that from $A$ and $B$ one can reconstruct $X$ and $Y$ respectively, each up to a global orthogonal transformation on the rows.
		In this light, the model is also equivalent to the so-called \emph{Procrustes Matching} problem \cite{maron2016point,dym2017exact,grave2019unsupervised}, where $Y$ in \prettyref{eq:model-LAP} undergoes a further random orthogonal transformation --
		see \prettyref{app:related} for a detailed discussion.

		\item \emph{Distance model}: The edge weights are pairwise squared distances $A_{ij} = \|X_i-X_j\|^2$ and $B_{ij} = \|X_i-X_j\|^2$. 
		This setting corresponds to the classical problem of multi-dimensional scaling (MDS), where the goal is to reconstruct the coordinates (up to global shift and orthogonal transformation) from the distance data (cf.~\cite{borg2005modern}).

		\item \emph{Random Dot Product Graph (RDPG)}: 
		In this model, the observed data are two graphs with adjacency matrices $A$ and $B$, 
		where $A_{ij} \overset{\rm ind}{\sim} \Bern\left(\kappa(\iprod{X_i}{X_j})\right)$ and $B_{ij} \overset{\rm ind}{\sim} \Bern\left(\kappa(\iprod{X_i}{X_j})\right)$ conditioned on $X$ and $Y$, and 
		$\kappa:\reals\to [0,1]$ is some link function, e.g.~$\kappa(t)=e^{-t^2/2}$. 
		In this way, we observe two instances of RDPG that are correlated through the underlying points and the latent matching.
See \cite{athreya2017statistical} for a recent survey on RDPG.

		%
		
		\item \emph{Random Geometric Graph (RGG)}: 
		Similar to RDPG,  $A_{ij} \overset{\rm ind}{\sim} \Bern(\kappa(\|X_i-X_j\|))$ 
		conditioned on $X_1,\ldots,X_n$ for some link function $\kappa:\reals_+\to [0,1]$ applied to the pairwise distances.
		The second RGG instance $B$ is constructed in the same way using $Y_1,\ldots,Y_n$.
		A simple example is $\kappa(t) = \indc{t \leq r} $ for some threshold $r>0$, where each pair of points within distance $r$ is connected \cite{gilbert1961random}; 
		see the monograph \cite{penrose2003random} for a comprehensive discussion on RGG.
	\end{itemize}
	
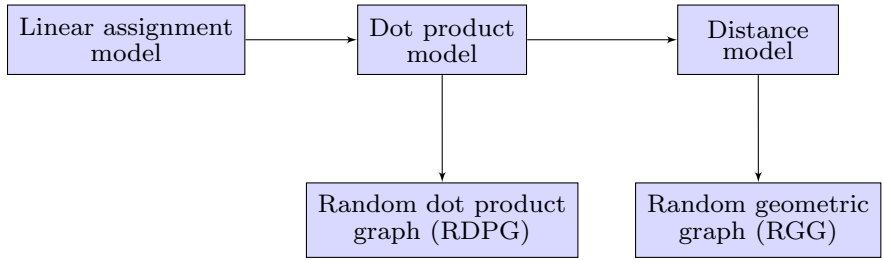
\begin{figure}[ht]
	\centering
\begin{tikzpicture}[scale=1.2,transform shape,node distance=2.5cm,auto,>=latex']
    \node [int] (a) {$\substack{\textrm{Linear assignment} \\ \textrm{model}}$};   
    \node [int] (c) [right of=a,node distance=3.5cm] {$\substack{\textrm{Dot product} \\ \textrm{model}}$}; 
		\node [int] (d) [right of=c,node distance=3.5cm] {$\substack{\textrm{~~Distance~~} \\ \textrm{model}}$}; 
		\node [int] (e) [below of=c,node distance=2cm] {$\substack{\textrm{Random dot product} \\ \textrm{graph (RDPG)}}$}; 
		\node [int] (f) [below of=d,node distance=2cm] {$\substack{\textrm{Random geometric} \\ \textrm{graph (RGG)}}$}; 
    \path[->] (a) edge node {}  (c);
		\path[->] (c) edge node {}  (d);
		\path[->] (c) edge node {}  (e);
		\path[->] (d) edge node {}  (f);
\end{tikzpicture}
\label{fig:model}
\caption{Geometric matching models. Here arrows denote statistical ordering.}
\end{figure}


Let us mention that the model where the two point clouds are directly observed has been recently studied by~\cite{dai2019database,dai2020achievability} in the context of feature matching and independently by~\cite{kunisky2022strong} as a geometric model for the planted matching problem, extending the previous work in \cite{Chertkov2010,moharrami2021planted,DWXY21} with iid weights to a geometric (low-rank) setting.
	In this model, $X$ and $Y$ in \prettyref{eq:LAP} are observed and the maximum likelihood estimator (MLE) of $\pi^*$ amounts to solving 
	\begin{equation}
	\max_{\Pi \in \fS_n} \Iprod{Y}{\Pi X}.
	\label{eq:LAP}
	\end{equation}
	which is a linear assignment (max-weight matching) problem on the weighted complete bipartite graph with weight matrix $YX^\top$.
	In the sequel we shall refer to this setting as the linear assignment model, 
	which we also study in this paper for the sake of proving impossibility results for the more difficult graph matching problem, as the coordinates are latent and only \emph{pairwise information} are available.



	\prettyref{fig:model} elucidates the logical connections between the aforementioned models. Among these, linear assignment model is the most informative, followed by the dot product model and the distance model, whose further stochastically degraded versions are RDPG and RGG, respectively. 
	As a first step towards understanding graph matching on geometric models, in this paper we study the case of weighted complete graphs in the dot product and distance models.


\subsection{Main results}
\label{sec:main}

By analyzing the MLE \prettyref{eq:LAP} in the stronger linear assignment model \prettyref{eq:model-LAP}, \cite{kunisky2022strong} identified a critical scaling of dimension $d$ at $\log n$: 
\begin{itemize}
	\item In the low-dimensional regime of $d\ll \log n$, accurate reconstruction requires the noise level $\sigma$ to be vanishingly small. More precisely, 
	with high probability, the MLE \prettyref{eq:LAP} recovers the latent $\pi^*$ perfectly
	 (resp.~with a vanishing fraction of errors) provided that $\sigma = o(n^{-2/d})$ (resp.~$\sigma = o(n^{-1/d})$).

	\item In the high-dimensional regime of $d \gg \log n$, it is possible for  $\sigma^2$ to be as large as $\frac{d}{(4+o(1))\log n}$. Since the dependency between the edges weakens as the latent dimension increases,\footnote{For the Wishart matrix, it is known \cite{jiang2015approximation,bubeck2018entropic} that the total variation between the joint law of the off-diagonals and their iid Gaussian counterpart converges to zero provided that $d=\omega(n^3)$. Analogous results have also been obtained in \cite{bubeck2016testing} showing that high-dimensional RGG is approximately \ER.}
this is consistent with the known results in the correlated \ER and Wigner model.
For example, to match two GOE matrices with correlation coefficient $\rho$, the sharp reconstruction threshold is at 
$\rho^2 = \frac{(4+o(1))\log n}{n}$ \cite{ganassali2021sharp,wu2021settling}. 
\end{itemize}
In this paper we mostly focus on the low-dimensional setting as this is the regime where geometric graph ensembles are structurally distinct from \ER graphs. Our main findings are two-fold: 
\begin{enumerate}
	\item The same reconstruction thresholds 
	remain achievable even when the coordinates are latent and only inner-product or distance data are accessible.
	
	\item Furthermore, these thresholds cannot be improved even when the coordinates are observed.
\end{enumerate}

To make these results precise, we start with the dot-product model with $A=XX^\top$ and $B=YY^\top$, and $Y=\Pi^*X+\sigma Z$ according to \prettyref{eq:model-LAP}. In this case the MLE turns out to be much more complicated than \prettyref{eq:LAP} for the linear assignment model. 
As shown in \prettyref{app:MLE}, the MLE takes the form
\begin{equation}
\hat{\Pi}_{\mathrm{ML}} = 
\arg\max_{\Pi\in\fS_n}  \int_{O(d)} \diff Q \exp\pth{\frac{\Iprod{B^{1/2}}{\Pi A^{1/2} Q}}{\sigma^2}},
\label{eq:MLE-dotprod}
\end{equation}
where the integral is with respect to the Haar measure on the orthogonal group $O(d)$, $A^{1/2} \triangleq U \Lambda^{1/2}\in\reals^{n\times d}$ based on the SVD
$A = U \Lambda U^\top$, and similarly for $B^{1/2}$.
It is unclear whether the above Haar integral has a closed-form solution,\footnote{The integral in \prettyref{eq:MLE-dotprod} can be reduced to computing 
$\int \diff Q \exp(\Iprod{\Lambda}{Q})$ for a diagonal $\Lambda$, which, in 
 principle, can be evaluated by Taylor expansion and applying formulas for the joint moments of $Q$ in \cite[Theorem 2.2]{matsumoto2013weingarten}.} let alone how to optimize it over all permutations. Next, we turn to its approximation.

As we will show later, in the low-dimensional case of $d=o(\log n)$, meaningful reconstruction of the latent matching is information-theoretically impossible unless $\sigma$ vanishes with $n$ at a certain speed. 
In the regime of small $\sigma$, Laplace's method suggests that the predominant contribution to the integral in \prettyref{eq:MLE-dotprod} 
comes from the maximum $\Iprod{B^{1/2}}{\Pi A^{1/2} Q}$ over $Q \in O(d)$. 
Using the dual form of the nuclear norm $\|X\|_* = \max_{Q \in O(d)} \Iprod{X}{Q}$, where $\|X\|_*$ denotes the sum of all singular values of $X$, we arrive at the following approximate MLE:
\begin{equation}
\hat{\Pi}_{\mathrm{AML}} = \arg\max_{\Pi \in \fS_n} \|(A^{1/2})^\top \Pi^\top B^{1/2}\|_*.
\label{eq:MLEapprox}
\end{equation}
We stress that the above approximation to the MLE \prettyref{eq:MLE-dotprod} is justified for the low-dimensional regime where $\sigma$ is small. In the high-dimensional (high-noise) case, the approximate MLE actually takes on the form of a quadratic assignment problem (QAP), which is the MLE for the well-studied iid model \cite{cullina2016improved}; in the special case of the dot-product model, it amounts to replacing the nuclear norm in \prettyref{eq:MLEapprox} by the Frobenius norm. We postpone this discussion to \prettyref{sec:discuss}.

To measure the accuracy of a given estimator $\hat \pi$, we define
$$ \overlap(\hat{\pi},\pi) \triangleq \frac{1}{n} |\sth{ i \in [n]:\hat{\pi}(i) = \pi(i) }| $$
as the fraction of nodes whose matching is correctly recovered.
The following result identifies the threshold at which the approximate MLE achieves perfect or almost perfect recovery.

\begin{theorem}[Recovery guarantee of AML in the dot-product model]
\label{thm:main}
Assume the dot-product model with $d=o(\log n)$. Let $ \hat{\pi}_{\mathrm{AML}} $ be the approximate MLE defined in \prettyref{eq:MLEapprox}.
\begin{enumerate}
\item[(i)] If $ \sigma \ll n^{-2/d} $, the estimator $ \hat{\pi}_{\mathrm{AML}} $ achieves perfect recovery with high probability:
\begin{equation}
\prob{ \overlap(\hat{\pi}_{\mathrm{AML}},\pi^*)=1  } = 1-o(1).
\label{eq:main1}
\end{equation}
\item[(ii)] 
If $ \sigma \ll n^{-1/d} $, the estimator $ \hat{\pi}_{\mathrm{AML}} $ achieves almost perfect recovery with high probability:
\begin{equation}
\prob{ \overlap(\hat{\pi}_{\mathrm{AML}},\pi^*) \geq 1-o(1)  } = 1-o(1).
\label{eq:main2}
\end{equation}
\end{enumerate}
\end{theorem}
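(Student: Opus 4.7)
The plan is to exploit the duality $\|M\|_* = \max_{Q \in O(d)} \langle M, Q\rangle$ to recast the AML as a joint Procrustes problem. The standard identity
\[
\max_{Q \in O(d)} \mathrm{tr}(A^\top B Q) = \tfrac{1}{2}\bigl(\|A\|_F^2 + \|B\|_F^2 - \min_Q \|A - BQ\|_F^2\bigr)
\]
applied with $A = Y$, $B = \Pi X$ yields
\[
F(\Pi) := \|X^\top \Pi^\top Y\|_* = \tfrac{1}{2}\bigl(\|X\|_F^2 + \|Y\|_F^2 - \min_{Q \in O(d)} \|\Pi X - YQ\|_F^2\bigr),
\]
so $\hat\Pi_{\mathrm{AML}} = \arg\min_\Pi \min_Q \|\Pi X - YQ\|_F^2$. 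Substituting $Y = \Pi^* X + \sigma Z$ and writing $\Sigma := (\Pi^*)^\top \Pi$, $\tilde Z := (\Pi^*)^\top Z \eqdistr Z$, the objective relative to the planted pair $(\Pi^*, I)$ becomes
\[
J(\Sigma, Q) := \|W\|_F^2 - 2\sigma\,\langle W, \tilde Z Q\rangle, \qquad W = W(\Sigma, Q) := \Sigma X - X Q.
\]
A sufficient condition for exact recovery is $\min_Q J(\Sigma, Q) > 0$ for every $\Sigma \neq I$; this is stronger than necessary by $O(\sigma^2 d^2)$ (the depth below zero of $\min_Q J(I, Q)$), which does not affect the claimed thresholds in the regime $d = o(\log n)$.

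Conditional on $X$ and for fixed $(\Sigma, Q)$, the noise $\langle W, \tilde Z Q\rangle$ is a centered Gaussian of variance $\|W\|_F^2$, so a sub-Gaussian tail bound gives $J(\Sigma, Q) \geq \|W\|_F^2 - C\sigma\|W\|_F \sqrt{\log n}$ with probability super-polynomially close to one. Recovery thus reduces to the geometric statement $\min_Q \|W(\Sigma, Q)\|_F \gtrsim \sigma\sqrt{\log n}$ uniformly over $\Sigma \neq I$.

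The binding case is a single transposition $\Sigma = (a,b)$: using $X^\top \Sigma X = X^\top X - (X_a - X_b)(X_a - X_b)^\top$, the $n - 2$ rows unaffected by the swap create a quadratic penalty $\asymp n\|A\|_F^2$ against perturbations $Q = I + A$ with $A$ skew-symmetric, the transposition contributes $2\|X_a - X_b\|^2$ at $A = 0$, and the first-order cross term $v^\top A v$ (with $v = X_a - X_b$) vanishes by skew-symmetry. Hence the optimal $Q$ is $O(\sigma d/\sqrt n)$-close to $I$ and $\min_Q \|W\|_F^2 = (1 + o(1))\cdot 2\|X_a - X_b\|^2$. Combining with the Gaussian bound, perfect recovery reduces to $\min_{a < b}\|X_a - X_b\| > C\sigma\sqrt{\log n}$. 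Since $\Prob(\|X_a - X_b\| \leq t) = \Theta(t^d)$ near $t = 0$, a union bound over $\binom n 2$ pairs gives $\min_{a<b}\|X_a - X_b\| \asymp n^{-2/d}$ w.h.p., so $\sigma \ll n^{-2/d}$ suffices (the $\sqrt{\log n}$ factor being absorbed into $\ll$), establishing \prettyref{eq:main1}. For $\Sigma$ of larger support, the signal only grows: an involution of $m$ disjoint transpositions gives $\min_Q \|W\|_F^2 \gtrsim \sum_k \|X_{a_k} - X_{b_k}\|^2$, a $k$-cycle with $k \geq 3$ gives signal $\gtrsim \|(I - \Sigma)X\|_F^2$, and derangement-like $\Sigma$ gives signal $\asymp nd$. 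A union bound grouped by cycle structure absorbs these easier regimes.

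For part (ii) we no longer need to beat \emph{every} swap, only enough that the fraction of misplaced vertices is $o(1)$. A volume count gives $\#\{(a,b) : \|X_a - X_b\| \leq t\} = O(n^2 t^d)$ w.h.p., so the relaxed condition $t \asymp \sigma\sqrt{\log n}$ leaves $o(n)$ bad pairs precisely when $\sigma \ll n^{-1/d}$, each contributing $O(1)$ errors and yielding overlap $1 - o(1)$. I anticipate the main technical obstacle to be the asymmetric case, $\Sigma$ containing a cycle of length $\geq 3$: here $X^\top \Sigma X$ is not symmetric, the subdifferential of $\|\cdot\|_*$ is no longer pinned to the identity, and the clean first-order expansion available for involutions must be replaced by either a direct Procrustes-style lower bound on $\min_Q \|\Sigma X - X Q\|_F$ or by an $\varepsilon$-net argument on $O(d)$; fortunately, the metric entropy $\tfrac{d(d-1)}{2}\log(1/\varepsilon) = o(\log n)$ under $d = o(\log n)$ makes any such net argument affordable.
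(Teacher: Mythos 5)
Your reformulation of the AML as a joint Procrustes minimization and your identification of transpositions and the minimal gap $\min_{a<b}\|X_a-X_b\|\asymp n^{-2/d}$ as the bottleneck match the paper's starting point (\prettyref{eq:MLEapprox1} and the reduction \prettyref{eq:error0}). The first genuine gap is quantitative but fatal to the stated thresholds: by conditioning on $X$, asking the Gaussian term to stay below $C\sigma\Fnorm{W}\sqrt{\log n}$ uniformly, and then requiring $\min_{a<b}\|X_a-X_b\|\gtrsim\sigma\sqrt{\log n}$, you prove recovery only when $\sigma\sqrt{\log n}\ll n^{-2/d}$ (and $\sigma\sqrt{\log n}\ll n^{-1/d}$ in part (ii)). The parenthetical claim that the $\sqrt{\log n}$ factor is ``absorbed into $\ll$'' is false: $\sigma=n^{-2/d}/\log\log n$ satisfies $\sigma\ll n^{-2/d}$ but not $\sigma\sqrt{\log n}\ll n^{-2/d}$. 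The paper avoids this loss by not decoupling gap and noise: for each fixed $(\Pi,Q)$ it bounds the failure probability by the annealed MGF $\Expect\exp\{-c_0\Fnorm{X-\Pi XQ}^2/\sigma^2\}$ as in \prettyref{eq:error}, which for a transposition is $\asymp(C\sigma)^d$, so the union bound over the $\binom{n}{2}$ transpositions needs only $n^2\sigma^d=o(1)$, i.e.\ exactly $\sigma\ll n^{-2/d}$; the trade-off between atypically close pairs of $X_i$'s and the noise is handled inside a single expectation rather than through a worst-case gap event.

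Second, the uniformity over $Q\in O(d)$, which you defer to ``an affordable $\eps$-net,'' is where the real difficulty sits, and it interacts badly with the fix for the first gap. Once you average over $X$ (as you must to avoid the $\sqrt{\log n}$ loss), the per-event probability for a transposition with $Q$ near $I_d$ is only $\asymp(C\sigma)^d\approx n^{-2}$, not super-polynomially small, while any net fine enough to control the discretization error (the paper takes resolution $\delta=\sigma/\sqrt n$) has $\log|N|\asymp d^{2}\log(\sqrt n/\sigma)$, which is at least of order $\log n$ even for constant $d$ and of order $d\log n\gg\log n$ when $d\to\infty$; against $n^{2}$ transpositions and a per-event bound of $n^{-2}$, a blunt union bound over such a net diverges. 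The paper closes this with three ingredients absent from your plan: the refined bound \prettyref{eq:error1}, showing the probability improves as the eigenvalue phases $|\theta_\ell|$ of $Q$ grow through the factor $\prod_\ell\bigl(C_0\sigma/(\sigma+|\theta_\ell|)\bigr)^{n_1}$; a \emph{localized} net of $O(d)$ whose cardinality at phase radius $\max_\ell|\theta_\ell|$ is $(1+2\max_\ell|\theta_\ell|/\delta)^{2d^2}$ (Lemma~\ref{lem:Net_Size}), so that local entropy and local probability balance; and the improved $(1-\delta^2/2)$ nuclear-norm approximation on a net (Lemma~\ref{lem:Net_Error}), which is what permits the coarse resolution $\delta=\sigma/\sqrt n$ in the first place. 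Your first-order perturbation argument does pin the optimal $Q$ near $I_d$ for a single transposition, but the noise $\iprod{W(\Sigma,Q)}{\tZ Q}$ also varies with $Q$, so uniform-in-$Q$ control is still unavoidable; likewise the union over the $\approx n^{k}$ permutations at Hamming distance $k$ and over general cycle types (handled in the paper via the generating function of cycle counts of random derangements, \prettyref{eq:MGF-randomcycle}) is left at sketch level. In short, the skeleton is right and parallels the paper, but as written the argument reaches only $\sigma\ll n^{-2/d}/\sqrt{\log n}$, resp.\ $\sigma\ll n^{-1/d}/\sqrt{\log n}$, and the $Q$-uniformity, the technical core of the paper's proof, is missing.
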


A few remarks are in order:

\begin{itemize}
    \item In fact we will show the following nonasymptotic estimate that implies \prettyref{eq:main2}: For all sufficiently small $ \eps$, if $ \sigma^{-d} > 16n 2^{2/\eps} $, then
$\overlap(\hat{\pi}_{\mathrm{AML}},\pi^*) \geq 1-\eps $ with probability tending to one.

\item The estimator \prettyref{eq:MLEapprox}
has previously appeared in the literature of Procrustes matching \cite{grave2019unsupervised}, albeit not as an approximation to the MLE in a generative model. See \prettyref{app:related} for a detailed discussion.
    
    \item Unlike linear assignment, it is unclear how to solve the optimization in \prettyref{eq:MLEapprox} over permutations efficiently.
    Nevertheless, for constant $d$ we show that it is possible to find an approximate solution in time that is polynomial in $n$ that achieves the same statistical guarantee as in \prettyref{thm:main}.
Indeed, note that \prettyref{eq:MLE-dotprod} is equivalent to the double maximization
\begin{equation}
\hat{\Pi}_{\mathrm{AML}} = \arg\max_{\Pi \in \fS_n} \max_{Q\in O(d)} \Iprod{B^{1/2}}{\Pi A^{1/2} Q}.
\label{eq:MLEapprox1}
\end{equation}
Approximating the inner maximum over a suitable discretization of $O(d)$, each maximization over $\Pi$ for fixed $Q$ is a linear assignment problem, which can be solved in $O(n^3)$ time. In \prettyref{sec:exp}, we provide a heuristic that shows \prettyref{eq:MLEapprox1} can be further approximated by the classical spectral algorithm of Umeyama \cite{umeyama1988eigendecomposition} which is much faster in practice and achieves good empirical performance.
For $d$ that grows with $n$, it is an open question to find a polynomial-time algorithm that attains the (optimal, as we show next) threshold in \prettyref{thm:main}.
\end{itemize}

Next, we proceed to the more difficult distance model, where $A_{ij} = \|X_i-X_j\|^2$ and $B_{ij} = \|Y_i-Y_j\|^2$.
Deriving the exact MLE in this model appears to be challenging; instead, 
we apply the estimator \prettyref{eq:MLEapprox} to an appropriately centered version of the data matrices.
Let $\ones\in\reals^n$ denotes the all-one vector and define 
$\bfF=\frac{1}{n}\ones\ones^\top$.
Then $A = -2 XX^\top + a\ones^\top+\ones a^\top$ and 
$B = -2 YY^\top + b\ones^\top+\ones b^\top$, where $a=(\|X_i\|^2)$ and $b=(\|Y_i\|^2)$.
Strictly speaking, the vectors $a$ and $b$ are correlated with the ground truth $\pi^*$, since $b$ can be viewed as a noisy version of $\Pi^*a$; however, we expect them to inform very little about $\pi^*$ because such scalar-valued observations are highly sensitive to noise (analogous to degree matching in correlated \ER graphs~\cite[Section 1.3]{ding2018efficient}).
As such, we ignore $a$ and $b$ by projecting $A$ and $B$ to the orthogonal complement of the vector $\ones$. 
Specifically, we compute, as commonly done in the MDS literature (see e.g.~\cite{shang2003localization,oh2010sensor}), 
\begin{equation}
\tilde A = - \frac{1}{2} (I-\bfF)A(I-\bfF), \quad 
\tilde B = - \frac{1}{2} (I-\bfF)B(I-\bfF).
\end{equation}
 It is easy to verify that $\tilde A = \tilde X \tilde X^\top$ and $\tilde B = \tilde Y \tilde Y^\top$, where $\tilde X=(I-\bfF)X$ and $\tilde Y=(I-\bfF)Y$ consist of centered coordinates $\tilde X_i = X_i-\bar X$ and $\tilde Y_i = Y_i-\bar Y$ respectively, with $\bar X = \frac{1}{n} \sum_{i=1}^n X_i$ and $\bar Y = \frac{1}{n} \sum_{i=1}^n Y_i$. Overall, we have reduced the distance model to a dot product model where the latent coordinates are now centered.

One can show that the MLE of $\Pi^*$ given the reduced data $(\tilde A,\tilde B)$ is of the same Haar-integral form \prettyref{eq:MLE-dotprod}. Using again the small-$\sigma$ approximation, 
we arrive at the following estimator by applying \prettyref{eq:MLEapprox} to the centered data $\tilde A$ and $\tilde B$:
\begin{equation}
\tilde{\Pi}_{\mathrm{AML}} = \arg\max_{\Pi \in \fS_n} \|(\tilde A^{1/2})^\top \Pi^\top \tilde B^{1/2}\|_*.
\label{eq:MLEapprox-distance}
\end{equation}

\begin{theorem}[Recovery guarantee in the distance model]
\label{thm:distance}
Assuming the distance model, \prettyref{thm:main} holds under the same condition on $d$ and $\sigma$, with 
the estimator $\tilde{\Pi}_{\mathrm{AML}}$ in \prettyref{eq:MLEapprox-distance} replacing 
$\hat{\Pi}_{\mathrm{AML}}$ in \prettyref{eq:MLEapprox}.
\end{theorem}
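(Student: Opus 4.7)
The plan is to reduce the distance model to the dot-product model: after centering, the data become a genuine dot-product observation of the centered coordinates, and centering perturbs the Gaussian law so weakly that the proof of \prettyref{thm:main} carries through.

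First, I would perform the algebraic reduction. Expanding $A_{ij}=\|X_i\|^2 - 2\Iprod{X_i}{X_j} + \|X_j\|^2$ gives $A = -2XX^\top + a\ones^\top + \ones a^\top$ and similarly for $B$. Since $(I-\bfF)\ones = 0$, left and right multiplication by $I-\bfF$ annihilates the rank-two correction, so $\tilde A = \tilde X \tilde X^\top$ and $\tilde B = \tilde Y \tilde Y^\top$. Because $\Pi^*\ones = \ones$ and $\ones^\top\Pi^* = \ones^\top$, we have $\bfF \Pi^* = \Pi^*\bfF = \bfF$, and therefore from $Y = \Pi^* X + \sigma Z$,
\[
\tilde Y = \Pi^*\tilde X + \sigma \tilde Z,\qquad \tilde X=(I-\bfF)X,\ \tilde Z=(I-\bfF)Z.
\]
Thus $(\tilde A,\tilde B)$ is a dot-product observation of the centered coordinates, and $\tilde\Pi_{\mathrm{AML}}$ in \prettyref{eq:MLEapprox-distance} is exactly the AML estimator \prettyref{eq:MLEapprox} applied to this centered problem.

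Next, I would quantify the distributional perturbation induced by centering. Picking any $U\in\reals^{n\times(n-1)}$ whose columns form an orthonormal basis of $\ones^\perp$ gives $\tilde X = UU^\top X$ with $U^\top X$ having i.i.d.\ $\calN(0,I_d)$ entries (and similarly for $\tilde Z$). Equivalently, $\tilde X_i = X_i-\bar X$ is jointly Gaussian with $\Cov(\tilde X_i,\tilde X_j)=(\delta_{ij}-1/n)I_d$. On the high-probability event $\{\|\bar X\|^2\le Cd/n,\ \|\bar Z\|^2\le Cd/n\}$, every pairwise inner product $\Iprod{\tilde X_i}{\tilde X_j}$ differs from $\Iprod{X_i}{X_j}$ by $O_P(1/\sqrt n)$, and every pairwise covariance of the coordinates shifts by $O(1/n)$, both negligible against the $\Theta(1)$ signal sizes driving the critical thresholds.

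Finally, I would transfer the analysis of \prettyref{thm:main}. That proof is organized as a union bound over permutations $\pi\neq\pi^*$ whose individual terms are controlled by tail and moment estimates for a fixed family of quadratic forms in the Gaussian entries; each such estimate depends continuously on the covariance, so the $O(1/n)$ perturbation changes only the prefactors by $(1+o(1))$ factors and preserves all exponential rates. Substituted into the union bound, this yields perfect and almost perfect recovery under the same conditions $\sigma\ll n^{-2/d}$ and $\sigma\ll n^{-1/d}$. The main obstacle lies in the sharpest step of the perfect-recovery analysis, where the constants in the exponents must be tracked exactly rather than being absorbed into $(1+o(1))$ factors; the strategy there is to condition on the event above and re-derive the required tail bounds directly for the centered coordinates via the coupling $\tilde X=UU^\top X$, so that every i.i.d.\ inequality used in the dot-product case lifts to the centered case with $n$ effectively replaced by $n-1=n(1-o(1))$.
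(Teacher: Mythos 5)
Your reduction is the same as the paper's first step: centering kills the rank-two correction, $\tilde A=\tilde X\tilde X^\top$, $\tilde B=\tilde Y\tilde Y^\top$, and $\tilde Y=\Pi^*\tilde X+\sigma\tilde Z$ because $\bfF$ commutes with $\Pi^*$; so far this is correct. The gap is in the transfer step, where you assert that centering is a ``negligible'' perturbation (covariance shifts of $O(1/n)$, inner-product shifts of $O_P(1/\sqrt n)$, ``$(1+o(1))$ prefactors, all exponential rates preserved, $n$ effectively replaced by $n-1$''). This is not what happens, and the heuristic justification is wrong about the relevant scales: the critical quantities in the proof of \prettyref{thm:main} live at scale $\sigma\ll n^{-2/d}$ (far below $1/\sqrt n$ for small constant $d$), and the perturbation enters through the quadratic form $\frac{1}{\sigma^2}\Fnorm{\cdot}^2$, so entrywise-$O(1/n)$ covariance changes are amplified by $\sigma^{-2}$. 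Concretely, in your own coupling $\tilde X=UU^\top X$ one gets $\Fnorm{\tilde X-\Pi\tilde X Q}^2=\Fnorm{W-(U^\top\Pi U)WQ}^2$ with $W$ iid Gaussian of size $(n-1)\times d$; but $U^\top\Pi U$ is \emph{not} a permutation of $n-1$ elements, it is the compression of $\Pi$ to $\ones^\perp$, whose spectrum is that of $\Pi$ with one eigenvalue $1$ removed. The effect is that the per-$(\Pi,Q)$ moment generating function $\tilde p(\Pi,Q)=\expect\exp\{-\frac{1}{32\sigma^2}\Fnorm{\tilde X-\Pi\tilde X Q}^2\}$ is \emph{larger} than its uncentered counterpart $p(\Pi,Q)$ by the factor $\prod_{\ell=1}^d\bigl(1+\theta_\ell^2/(16\sigma^2)\bigr)^{1/2}$ (this is the paper's Lemma~\ref{lem:MGF_Compare}), which can be as large as roughly $\sigma^{-d}$, i.e.\ polynomially large in $n$ near the threshold --- nothing like a $(1+o(1))$ factor. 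The proof only survives because this loss, when fed back into the net/union-bound sums of Lemma~\ref{lem:Estimate_MGF}, amounts to one extra power $(1+|m|/2)$ in each summand (exponent $2d^2+1$ instead of $2d^2$), and one must re-verify that all the sums still vanish; that re-verification (Lemma~\ref{lem:Estimate_MGF_Distance}) is exactly the content your proposal skips.

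Two smaller points you would also need to handle explicitly. First, the noise $\tilde Z$ has dependent entries; the paper reduces the Gaussian linear term back to the iid $Z$ via $\Iprod{\tilde Z}{M}=\Iprod{Z}{M}$ whenever $(I-\bfF)M=M$ (using that $I-\bfF$ commutes with $\Pi$), which your coupling would give if carried out in the $W$-coordinates, but you never do. Second, the concentration of $\Fnorm{\tilde X}^2$ (the event $\calE_1$) needs a short argument (the paper uses Hanson--Wright), not just conditioning on $\|\bar X\|$ being small. In summary: right reduction and a workable coupling, but the central quantitative step --- the exact spectral comparison of the centered and uncentered MGFs and the re-run of the union bound with the resulting polynomially large factor --- is missing, and the stated reason for skipping it is incorrect.
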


Finally, we state an impossibility result for the linear assignment model, proving that the perfect
and almost perfect recovery threshold
of $\sigma =o(n^{-2/d})$ and 
$\sigma =o(n^{-1/d})$ 
obtained by analyzing the MLE in \cite{kunisky2022strong} are in fact information-theoretically necessary.
Complementing \prettyref{thm:main} and \prettyref{thm:distance}, this result also establishes the optimality of the estimator \prettyref{eq:MLEapprox} and \prettyref{eq:MLEapprox-distance} for their respective model.


\begin{theorem}[Impossibility result in the linear assignment model]
\label{thm:opt}
Consider the linear assignment model with $d=o(\log n)$.
\begin{enumerate}[(i)]
 \item  If there exists an estimator that achieves perfect recovery with high probability, then $\sigma \le n^{-2/d}$. \label{opt1}
 
 \item If there exists an estimator that achieves almost perfect recovery with high probability, then $\sigma \le n^{-\left(1-o(1)\right) /d}$. \label{opt2}
\end{enumerate}
Furthermore, in the special case of $d=\Theta(1)$, necessary conditions in $(i)$ and $(ii)$
     can be improved to $\sigma \le o(n^{-2/d})$ and
     $\sigma \le o(n^{-1/d})$, respectively.
\end{theorem}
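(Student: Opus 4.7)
\textbf{Proof plan for Theorem~\ref{thm:opt}.}
The plan is to exploit that the MLE (minimizing the $0$--$1$ risk) and the Hamming-Bayes estimator (minimizing expected error count) are optimal among all estimators under the uniform prior on $\pi^*$, so that showing these fail implies impossibility for arbitrary estimators. Assume $\pi^*=\mathrm{id}$ by exchangeability. The MLE for \prettyref{eq:LAP} is $\argmax_{\Pi\in\fS_n}\iprod{Y}{\Pi X}=\sum_i \iprod{Y_i}{X_{\pi(i)}}$, and the likelihood gap between $\pi^*$ and its transposition $\pi^*\circ(ij)$ equals $D_{ij}:=\iprod{Y_i-Y_j}{X_i-X_j}$. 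Call $(i,j)$ a \emph{bad pair} if $D_{ij}\le 0$, i.e., swapping is at least as likely. Substituting $Y_i=X_i+\sigma Z_i$ with $U:=X_i-X_j\sim\calN(0,2I_d)$ independent of $Z_i-Z_j\sim\calN(0,2I_d)$ yields $D_{ij}\mid U\sim\calN(\|U\|^2,2\sigma^2\|U\|^2)$, so
\[
p^*:=\Prob[D_{ij}\le 0]=\expect\!\left[\Phi\!\left(-\tfrac{\|U\|}{\sqrt{2}\,\sigma}\right)\right].
\]
A substitution using the $\chi_d$ density of $\|U\|/\sqrt{2}$ gives $p^*=c_d\,\sigma^d(1+o(1))$ with $c_d=\Gamma((d+1)/2)/(d\sqrt{\pi}\,\Gamma(d/2))=\Theta(1/\sqrt{d})$. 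For $d=o(\log n)$, $c_d$ contributes only sub-polynomial $n$-dependence, so $p^*=\sigma^d\cdot n^{-o(1)}$.

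For part~(i), let $N_{\mathrm{bad}}$ count bad pairs, so $\expect[N_{\mathrm{bad}}]=\binom{n}{2}p^*\asymp n^2\sigma^d n^{-o(1)}$. The indicators $\1\{D_{ij}\le 0\}$ for disjoint pairs are independent, and for pairs sharing a vertex $k$ they are conditionally independent given $(X_k,Y_k)$; this yields $\mathrm{Var}(N_{\mathrm{bad}})=o(\expect[N_{\mathrm{bad}}]^2)$ whenever $\expect[N_{\mathrm{bad}}]\to\infty$. When $\sigma>n^{-(2-o(1))/d}$, Chebyshev gives $\Prob[N_{\mathrm{bad}}\ge 1]\to 1$; in the presence of a bad pair, $\pi^*$ is not the unique maximum of the likelihood, so no estimator achieves perfect recovery whp. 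For the sharpening to $\sigma\le o(n^{-2/d})$ in the $d=\Theta(1)$ case, the constant $c_d>0$ does not vanish, so $\sigma\ge c\,n^{-2/d}$ already makes $\expect[N_{\mathrm{bad}}]$ bounded below by a positive constant; a Poisson approximation (justified by the near-independence of disjoint pair indicators via the method of moments) produces $\Prob[N_{\mathrm{bad}}\ge 1]\ge 1-e^{-c'}>0$, which rules out perfect recovery whp.

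For part~(ii), we lower bound the Bayes risk through a data-dependent oracle. Let $M$ be a maximum matching in the bad-pairs graph $G_{\mathrm{bad}}$ on $[n]$ (a deterministic function of $(X,Y)$), and grant an oracle the values of $\pi^*$ on $[n]\setminus V(M)$ together with the partition of $V(M)$ into the pairs of $M$. Since supplying extra information can only decrease Bayes risk, $\expect[N(\hat\pi,\pi^*)]\ge\expect[N(\hat\pi^{\mathrm{B}}_{\mathrm{oracle}},\pi^*)]$ for every estimator $\hat\pi$. Conditional on the oracle the residual problem decouples into $|M|$ independent binary tests, one per pair $(i_\ell,j_\ell)\in M$, with log-likelihood ratio $D_{i_\ell j_\ell}/\sigma^2\le 0$ because the pair is bad; the Bayes-optimal decision flips both labels and thus yields exactly two errors per revealed pair. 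Hence
\[
\expect[N(\hat\pi,\pi^*)]\ge 2\,\expect[\nu(G_{\mathrm{bad}})] \qquad \text{for every estimator } \hat\pi.
\]
When $\sigma>n^{-(1-\eps)/d}$ for some $\eps>0$, $\expect[|E(G_{\mathrm{bad}})|]=\Theta(n^{1+\eps-o(1)})$, and the conditional independence $\{D_{ij}\le 0\}_{j\ne i}\mid (X_i,Y_i)$ gives by Chernoff a maximum degree $\Delta(G_{\mathrm{bad}})=O(n^\eps\log n)$ whp. A Karp--Sipser-type argument (removing the $o(n)$ vertices of exceptional degree and greedily matching the remainder) then produces $\nu(G_{\mathrm{bad}})=\Omega(n)$ whp, so $\expect[N]=\Omega(n)$ for every estimator, and Markov's inequality precludes almost perfect recovery whp. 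The $d=\Theta(1)$ sharpening to $\sigma\le o(n^{-1/d})$ follows from running the same chain with exact constants instead of the $n^{-o(1)}$ slack.

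The principal obstacle will be the matching lower bound $\nu(G_{\mathrm{bad}})=\Omega(n)$: the edges of $G_{\mathrm{bad}}$ incident to a common vertex are correlated through the shared $(X_i,Y_i)$, so Erd\H{o}s--R\'enyi matching theorems do not apply off-the-shelf. The conditional-independence factorization must be used to get Chernoff-type degree concentration and then to transfer the concentrated edge count into a linear-sized matching. A secondary technical point is to verify that the prefactor $c_d=\Theta(1/\sqrt d)$ remains $n^{-o(1)}$ over the full range $d=o(\log n)$, which is needed so that the thresholds match those stated without extra logarithmic losses.
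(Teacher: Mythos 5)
Your part~(i) is essentially the paper's argument: the paper also rules out perfect recovery by a second-moment computation on the count of "bad pairs" (augmenting $2$-orbits) together with the optimality of the MLE under the uniform prior (see Theorem~\ref{thm:exact_nec}, which uses Paley--Zygmund where you invoke Chebyshev/Poisson approximation); the covariance term $\expect{I_{ij}I_{ik}}$ does need an explicit estimate rather than the assertion that conditional independence given $(X_i,Y_i)$ "yields" $\var=o(\expect^2)$, but that is a routine computation and the route is the same.

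Part~(ii), however, has a genuine gap in its central inequality $\expect[\diff(\hat\pi,\pi^*)]\ge 2\,\expect[\nu(G_{\mathrm{bad}})]$. Your genie reveals $\pi^*$ outside $V(M)$ and the pairing of $V(M)$, where $M$ is a maximum matching of the bad-pair graph; but badness of a pair is defined \emph{relative to the ground truth} (it is the sign of $\iprod{Y_i-Y_j}{X_{\pi^*(i)}-X_{\pi^*(j)}}$), so the genie's output is a function of $\pi^*$ itself, not only of the data. Consequently, conditioning on the genie does not leave $|M|$ uniform-prior independent binary tests: a candidate permutation obtained by swapping a revealed pair would make that pair \emph{good} under the swapped truth, hence would typically be inconsistent with the genie's revealed matching and receive zero posterior weight. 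In other words, this genie can reveal so much truth-dependent selection information that the genie-aided Bayes estimator may recover $\pi^*$ exactly, and the claimed "two errors per revealed bad pair" lower bound does not follow; the data-processing step ("extra information only helps") is true but useless once the decoupling fails. The paper avoids this by working with the posterior directly: from $\Omega(n)$ vertex-disjoint augmenting $2$-orbits (Lemma~\ref{lmm:aug_cycle}, proved by a second moment plus Tur\'an -- this is the part your matching/degree-concentration argument parallels) it builds \emph{exponentially many} permutations at Hamming distance $\Omega(n)$ with likelihood at least $L(\pi^*)$, lower-bounding the posterior mass of far permutations (Lemma~\ref{lmm:Pibad}); and, crucially, it upper-bounds the posterior mass of the near permutations by a truncated first-moment computation using the MGF and cycle-counting estimates from the achievability analysis (Lemma~\ref{lmm:Pigood}). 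Your proposal has no counterpart to this second ingredient, and without it (or a corrected reduction) the existence of many disjoint bad pairs does not by itself force every estimator to make $\Omega(n)$ errors. Separately, the general-$d$ statement $\sigma\le n^{-(1-o(1))/d}$ in part~(ii) is obtained in the paper by a short mutual-information argument (Proposition~\ref{prop:impossiblity}), which your outline would also need once the oracle step is removed.
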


\prettyref{thm:opt}\eqref{opt1} slightly improves the necessary condition for perfect recovery in \cite{kunisky2022strong} from $\sigma=O(n^{-2/d})$  to $\sigma=o(n^{-2/d})$. 
For almost perfect recovery, the negative result in \cite{kunisky2022strong} is limited to MLE, while \prettyref{thm:opt} holds for all algorithms. Moreover, the necessary condition in  \prettyref{thm:opt}\eqref{opt2} was conjectured in \cite[Conjecture 1.4, item 1]{kunisky2022strong}, which we now resolve in the positive. Finally, while our focus is in the low-dimensional case of $d=o(\log n)$, we also provide necessary conditions that hold for general $d$. (See~\prettyref{app:lower_bounds} for details). 

In view of \prettyref{fig:model}, since the negative results in \prettyref{thm:opt} are proved for the strongest model and 
the positive results in \prettyref{thm:distance} are for the weakest model, we conclude that for all three models, namely, linear assignment, dot-product, and distance model, the thresholds for exact and almost perfect reconstruction is given by $n^{-2/d}$ and $n^{-1/d}$, respectively.

\section{Outline of proofs}

\subsection{Positive results}
\label{sec:positve-sketch}

 The positive results of \prettyref{thm:main} and \prettyref{thm:distance} are proved in Appendix \ref{sec:positve} and \prettyref{app:distance}.
Here we briefly describe the proof strategy in the dot product model.
Suppose we want to bound the probability that the approximate MLE $\hat{\Pi}_{\mathrm{AML}}$ in \prettyref{eq:MLEapprox} makes more than  $t$ number of errors. Denote by $ \diff(\pi_1,\pi_2) \triangleq \sum_{i=1}^n \indc{\pi_1(i) \neq \pi_2(i)} $ the Hamming distance between two permutations $ \pi_1,\pi_2 \in S_n $.
Without loss of generality, we will assume that $ \pi^*=\mathrm{Id} $.
By the orthogonal invariance of $\|\cdot\|_*$, we can assume, \emph{for the sake of analysis}, that $A^{1/2}=X$ and $B^{1/2} = Y$. 
Applying \prettyref{eq:MLEapprox1}, 
\begin{align}
      \prob{ \diff(\hat{\Pi}_{\mathrm{AML}},\mathrm{Id}) > t }  
\leq & \prob{ \max_{\pi: \diff(\pi,\mathrm{Id}) > t} \|X^\top \Pi^\top Y\|_* \geq \|X^\top Y\|_* } \nonumber \\
\leq & \prob{ \max_{\pi: \diff(\pi,\mathrm{Id}) > t} \max_{Q \in O(d)} \Iprod{X^\top \Pi^\top Y}{Q} \geq \Iprod{X^\top Y}{I_d} }.
\label{eq:error0}
\end{align}
For each fixed $\Pi$ and $Q$, averaging over the noise yields, for some absolute constant $c_0$,
\begin{equation}
 \prob{\Iprod{X^\top \Pi^\top Y}{Q} \geq \Iprod{X^\top Y}{I_d} } 
 \leq   \expect \exp \left\{ -\frac{c_0}{ \sigma^2} \Fnorm{X-\Pi X Q}^2 \right\}.
 \label{eq:error}
\end{equation}

In the remaining argument, there are three places where the structure of the orthogonal group $O(d)$ plays a crucial role:
\begin{enumerate}

\item The quantity in \prettyref{eq:error} turns out to depend on $\Pi$ through its cycle type 
and on $Q$ through its eigenvalues. 
Crucially, the eigenvalues of an orthogonal matrix $Q$ lie on the unit circle, denoted by $(e^{\ii \theta_1},\ldots,e^{\ii \theta_d})$, with $|\theta_\ell| \leq \pi$. We then show that the error probability in \prettyref{eq:error} can be further bounded by, for some absolute constant $C_0$,
\begin{equation}
\label{eq:error1}
(C_0 \sigma)^{d(n-\frakc)} 
\pth{\prod_{\ell=1}^d \frac{C_0 \sigma}{\sigma+ |\theta_\ell|}}^{n_1},
\end{equation}
where $n_1$ is the number of fixed points in $\pi$ and $\frakc$ is the total number of cycles.

\item 
In order to bound \prettyref{eq:error0}, we take a union bound over $\pi$ and another union bound over an appropriate discretization of $O(d)$. This turns out to be much subtler than the usual $\delta$-net-based argument, as one needs to implement a localized covering and take into account the local geometry of the orthogonal group. Specifically, note that the error probability in \prettyref{eq:error} becomes larger when $\pi$ is near $\mathrm{Id}$ and when $Q$ is near $I_d$ (i.e.~the phases $|\theta_\ell|$'s are small); fortunately, the entropy (namely, the number of such $\pi$ and such $Q$ within a certain resolution) also becomes smaller, balancing out the deterioration in the probability bound. 
 This is the second place where the structure of $O(d)$ is used crucially, as the local metric entropy of $O(d)$ in the vicinity of $I_d$ is much lower than that elsewhere.

\item
Controlling the approximation error of the nuclear norm is another key step. Note that for any matrix norm of the dual form $\|A\| = \sup_{\|Q\|' \leq 1} \Iprod{A}{Q}$, where $\|\cdot\|'$ is the dual norm of $\|\cdot\|$,  the standard $ \delta $-net argument (cf.~\cite[Lemma 4.4.1]{Vershynin-HDP}) yields a multiplicative approximation $ \max_{Q \in N} \iprod{A}{Q} \geq (1-\delta) \|A\|$,
where $N$ is any $\delta$-net of the dual norm ball.
In general, this result cannot be improved (e.g.~for Frobenius norm); nevertheless, for the special case of nuclear norm, this approximation ratio can be improved from $1-\delta$ to $1-\delta^2$,
as the following result of independent interest shows. This improvement turns out to be crucial for obtaining the sharp threshold.
\begin{lemma}\label{lem:Net_Error}
Let $ N \subset O(d) $ be a $ \delta $-net in operator norm of the orthogonal group $ O(d) $. For any $ A \in \R^{d \times d} $,
\begin{equation}
\max_{Q \in N} \iprod{A}{Q} \geq \left( 1-\frac{\delta^2}{2} \right) \|A\|_*.
\end{equation}
\end{lemma}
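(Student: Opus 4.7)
The plan is to reduce the problem to bounding a quadratic form in the orthogonal perturbation, which naturally produces the $\delta^2$ improvement over the generic linear bound $\Iprod{A}{Q-Q^*}\le \delta \|A\|_*$.

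First I would invoke the dual characterization $\|A\|_*=\max_{Q\in O(d)}\Iprod{A}{Q}$, attained at the polar factor $Q^*=UV^\top$ of the SVD $A=U\Sigma V^\top$. A short computation gives $Q^{*\top}A=V\Sigma V^\top$, which is symmetric positive semidefinite with eigenvalues equal to the singular values of $A$. Since $\Iprod{A}{Q}=\Iprod{Q^{*\top}A}{Q^{*\top}Q}$, and $Q^{*\top}Q$ ranges over $O(d)$ as $Q$ does, the problem is invariant under the substitution $A\leftarrow Q^{*\top}A$, $Q\leftarrow Q^{*\top}Q$. This reduces the lemma to the case where $A$ is symmetric positive semidefinite and the maximum $\|A\|_*=\Tr(A)$ is attained at $Q^*=I_d$. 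Note that this substitution also preserves the $\delta$-net property: $\|Q^{*\top}Q_1-Q^{*\top}Q_2\|_{\mathrm{op}}=\|Q_1-Q_2\|_{\mathrm{op}}$, so the translated net is still a $\delta$-net of $O(d)$.

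Next, pick $Q\in N$ with $\|Q-I_d\|_{\mathrm{op}}\le \delta$, which exists by the net property. The crucial identity, valid for any real unit vector $v$ and any $Q\in O(d)$, is
\[
\|Qv-v\|_2^2 = 2-2\,v^\top Q v,
\]
which follows by expanding the left side and using $\|Qv\|_2=\|v\|_2=1$ together with $v^\top Q^\top v=v^\top Q v$ (both sides are the same scalar). Hence $v^\top Q v = 1-\tfrac{1}{2}\|(Q-I_d)v\|_2^2 \ge 1-\tfrac{1}{2}\|Q-I_d\|_{\mathrm{op}}^2 \ge 1-\tfrac{\delta^2}{2}$.

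Finally, I would apply this pointwise bound to the spectral decomposition $A=\sum_{i=1}^d \sigma_i v_i v_i^\top$ (with $\sigma_i\ge 0$ the eigenvalues, which equal the singular values after the reduction):
\[
\Iprod{A}{Q}=\Tr(AQ)=\sum_{i=1}^d \sigma_i\, v_i^\top Q v_i \ge \Big(1-\tfrac{\delta^2}{2}\Big)\sum_{i=1}^d \sigma_i = \Big(1-\tfrac{\delta^2}{2}\Big)\|A\|_*,
\]
giving the claim. There is no real obstacle once one commits to reducing via polar decomposition to the symmetric PSD case; the key insight is simply that the ``right'' quantity to control is not $\Iprod{A}{Q-I_d}$ (which gives only a linear-in-$\delta$ bound via Cauchy--Schwarz / duality) but rather the quadratic form $v_i^\top(I_d-Q)v_i$, which is nonnegative and second-order in the displacement because $Q$ preserves norms.
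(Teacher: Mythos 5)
Your proof is correct, and at heart it runs on the same second-order mechanism as the paper's proof, just packaged differently. The paper writes $Q=Q_*+\Delta$ for the maximizer $Q_*=UV^\top$ and uses the orthogonality constraint $\Delta Q_*^\top+Q_*\Delta^\top+\Delta\Delta^\top=0$ to obtain $\iprod{A}{\Delta}=-\tfrac{1}{2}\iprod{UDU^\top}{\Delta\Delta^\top}$, which it then bounds by $\tfrac{1}{2}\|A\|_*\,\delta^2$ via nuclear--operator norm duality. You instead first reduce to the case where $A$ is symmetric PSD and the maximizer is $I_d$ via the substitution $A\leftarrow Q_*^\top A$, $Q\leftarrow Q_*^\top Q$ (legitimate, since left multiplication by $Q_*^\top$ is an operator-norm isometry of $O(d)$, so the translated net is still a $\delta$-net), and then apply the scalar identity $v^\top Qv=1-\tfrac{1}{2}\|(Q-I_d)v\|^2$ to each eigenvector. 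Unwinding your computation, $\sum_i\sigma_i\,v_i^\top(I_d-Q)v_i=\tfrac{1}{2}\iprod{A}{(Q-I_d)^\top(Q-I_d)}$, which is essentially the paper's key identity specialized to $Q_*=I_d$; so the two arguments differ only in organization, not substance. What your version buys is transparency: after the polar reduction, the vanishing of the first-order term is a one-line consequence of norm preservation applied eigenvector by eigenvector, rather than of the matrix constraint equation, at the cost of one extra (routine) reduction step.
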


\end{enumerate}
The proof of \prettyref{thm:main} is completed by combining \prettyref{eq:error1} 
with a union bound over a specific discretization of $O(d)$, 
whose cardinality satisfies the desired eigenvalue-based local entropy estimate, followed by a union bound over $\pi$ which can be controlled using moment generating function of the number of cycles in a random derangement.


\subsection{Negative results}
\label{sec:negative-sketch}

The information-theoretic lower bounds in  \prettyref{thm:opt} for the linear assignment model  are proved in~\prettyref{app:lower_bounds}. Here we sketch the main ideas. We first derive a necessary condition for almost perfect recovery that holds for any $d$ via a simple mutual information argument~\cite{HajekWuXu_one_info_lim15}:
On one hand, the mutual information $I(\pi^*;X,Y)$ can be upper bounded by the Gaussian channel capacity as $\frac{nd}{2} \log (1+\sigma^{-2})$. On the other hand, to achieve almost perfect recovery, $I(\pi^*;X,Y)$ needs be asymptotically equal to the full entropy $H(\pi^*)$ which is $(1-o(1)) \log n$. These two assertions together immediately imply that $\frac{nd}{2} \log (1+\sigma^{-2}) \ge ((1-o(1)) \log n$, which further simplifies to $\sigma=n^{-(1-o(1))/d}$ when $d=o(\log n)$. However, for constant $d$, this necessary condition turns out to be loose and the main bulk of our proof is to improve it to the optimal condition $\sigma=o(n^{-1/d})$. To this end, we follow the program recently developed in~\cite{DWXY21} in the context of the planted matching model by analyzing the posterior measure of the latent $\pi^*$ given the data $(X,Y)$. 

To start, a simple yet crucial observation in~\cite{DWXY21} is that to prove the impossibility of almost perfect recovery, it suffices to show a random permutation sampled from the posterior distribution is at Hamming distance $\Omega(n)$ away from the ground truth with constant probability. As such, it suffices to show there is more posterior mass over the bad permutations (those
far away from the ground truth) than that over the good permutations (those near the ground truth) in the posterior distribution. To proceed, 
we first bound from above the total posterior mass of good permutations by a truncated first moment calculation applying the large deviation analysis developed in the  proof of the positive results. To bound from below the posterior mass of bad permutations, 
we aim to construct exponentially many bad permutations $\pi$ whose log likelihood $L(\pi)$ is no smaller than $L(\pi^*)$. A key observation is that $L(\pi)-L(\pi^*)$ can be decomposed according to the orbit decomposition of $(\pi^*)^{-1}\circ \pi$:
\begin{align}
    L(\pi)- L(\pi^*)= \frac{1}{\sigma^2} \iprod{\Pi X- \Pi^*X}{Y} =\frac{1}{\sigma^2} \sum_{O \in \calO} \Delta (O), \label{eq:L_diff_decomp}
\end{align}
where $\calO$ denotes the set of orbits in $(\pi^*)^{-1}\circ \pi$ and for any orbit $O=(i_1, i_2, \ldots, i_t)$, 
\begin{align}
\Delta(O) \triangleq 
\sum_{k=1}^t  \iprod{X_{\pi^*(i_{k+1})} -X_{\pi^*(i_k)} }{Y_{i_k}}. \label{eq:Delta_O_def}
\end{align}
Thus, the goal is to find  
a collection of vertex-disjoint orbits $O$ whose total lengths add up to $\Omega(n)$ and each of which is \emph{augmenting} in the sense that $\Delta(O) \ge 0$. Here, a key difference to \cite{DWXY21} is that in the planted matching model with independent edge weights studied there, short augmenting orbits are insufficient to meet the $\Omega(n)$ total length requirement; instead, \cite{DWXY21} resorts to a sophisticated two-stage process that first finds many augmenting paths then connects then into long cycles. Fortunately,
for the linear assignment model in low dimensions of $d=\Theta(1)$, as also observed in~\cite{kunisky2022strong} in their analysis of the MLE, it suffices to look for augmenting $2$-orbits and take their disjoint unions. More precisely, we show that there are $\Omega(n)$ many vertex-disjoint augmenting $2$-orbits. 
This has already been done in~\cite{kunisky2022strong} using a  second-moment method enhanced by an additional concentration inequality. It turns out that the correlation among the augmenting $2$-orbits is mild enough so that a much simpler argument via a basic second-moment calculation followed by an application of Tur\'an's theorem suffices to extract a large vertex-disjoint subcollection. Finally, these vertex-disjoint augmenting $2$-orbits give rise to exponentially many permutations that differ from the ground truth by $\Omega(n)$. 

Finally, we briefly remark on perfect recovery, for which it suffices to focus on the MLE \prettyref{eq:LAP} which minimizes the error probability for uniform $\pi^*$.
In view of the likelihood decomposition given in~\prettyref{eq:L_diff_decomp}, it further suffices to prove the existence of \emph{an} augmenting $2$-orbit. This can be easily done using the second-moment method. A similar strategy was adopted in~\cite{dai2019database}, but our first-moment and second-moment estimates are tighter and hence yield nearly optimal conditions.

\section{Experiments}
\label{sec:exp}
In this section we present preliminary numerical results on synthetic data from the dot product model. 
As observed in \cite{grave2019unsupervised}, the form of the approximate MLE $\hat{\Pi}_{\mathrm{AML}}$ in \prettyref{eq:MLEapprox1} as a double maximization over $\Pi \in \fS_n$ and $Q\in O(d)$ naturally suggests an alternating maximization strategy by iterating between the two steps:
(a) For a fixed $Q$, the $\Pi$-maximization is a linear assignment; 
(b) For a fixed $\Pi$, the $Q$-maximization is the so-called orthogonal Procrustes problem and easily solved via SVD \cite{schonemann1966generalized}. 
However, with random initialization this method performs rather poorly falling short of the optimal threshold predicted by \prettyref{thm:main}. While more  informative initialization (such as starting from a $\Pi$ obtained by the doubly-stochastic relaxation of QAP \cite{grave2019unsupervised}) can potentially help, in this section we focus on methods that are closer to the original approximate MLE.

As the proof of \prettyref{thm:main} shows, as far as achieving the optimal threshold is concerned it suffices to consider a finely discretized $O(d)$. 
This can be easily implemented in $d=2$, 
since any $2\times2$ orthogonal matrix is either a rotation or reflection of the form:
$\big(\begin{smallmatrix}
\cos(\theta) & -\sin(\theta)\\
\sin(\theta) & \cos(\theta)
\end{smallmatrix}\big)$
or 
$\big(\begin{smallmatrix}
\cos(\theta) & \sin(\theta)\\
\sin(\theta) & -\cos(\theta)
\end{smallmatrix}\big)$. 
We then solve
\prettyref{eq:MLEapprox1} on a grid of $\theta$ values, by 
solving the $\Pi$-maximization for each such $Q$ and reporting the solution with the highest objective value. As shown in \prettyref{fig:match_2D} for $n=200$, the performance of the approximate MLE in the dot-product model (green) follows closely that of the MLE in the linear assignment model (blue). Using the greedy matching algorithm (red) in place of the linear assignment solver greatly speeds up the computation at the price of some  performance degradation.

\begin{figure}[ht]
	\centering
	\subfigure[$n=200$ and $d=2$. The green and red curves correspond to  \prettyref{eq:MLEapprox1} on 
    $T_0=100$ discretized angles, with exact linear assignment or greedy matching.]%
	{\label{fig:match_2D} \includegraphics[width=0.48\columnwidth]{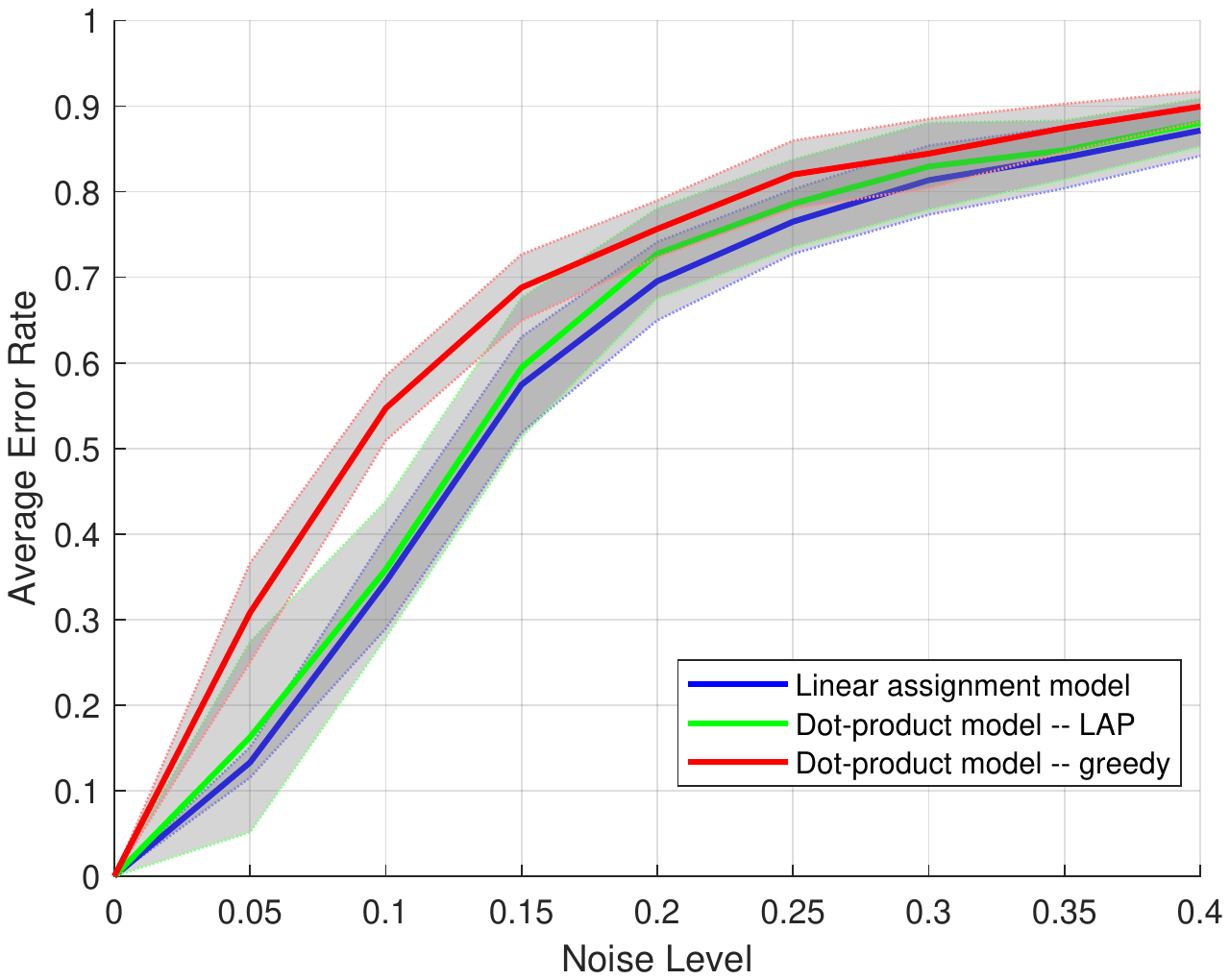}}
	\subfigure[$n=200$ and $d=4$. The green and red curves are based on \prettyref{eq:MLEapprox-Z2} with exact linear assignment or greedy matching, and the yellow curve corresponds to the Umeyama algorithm \prettyref{eq:umeyama}.]%
	{\label{fig:match_4D} \includegraphics[width=0.48\columnwidth]{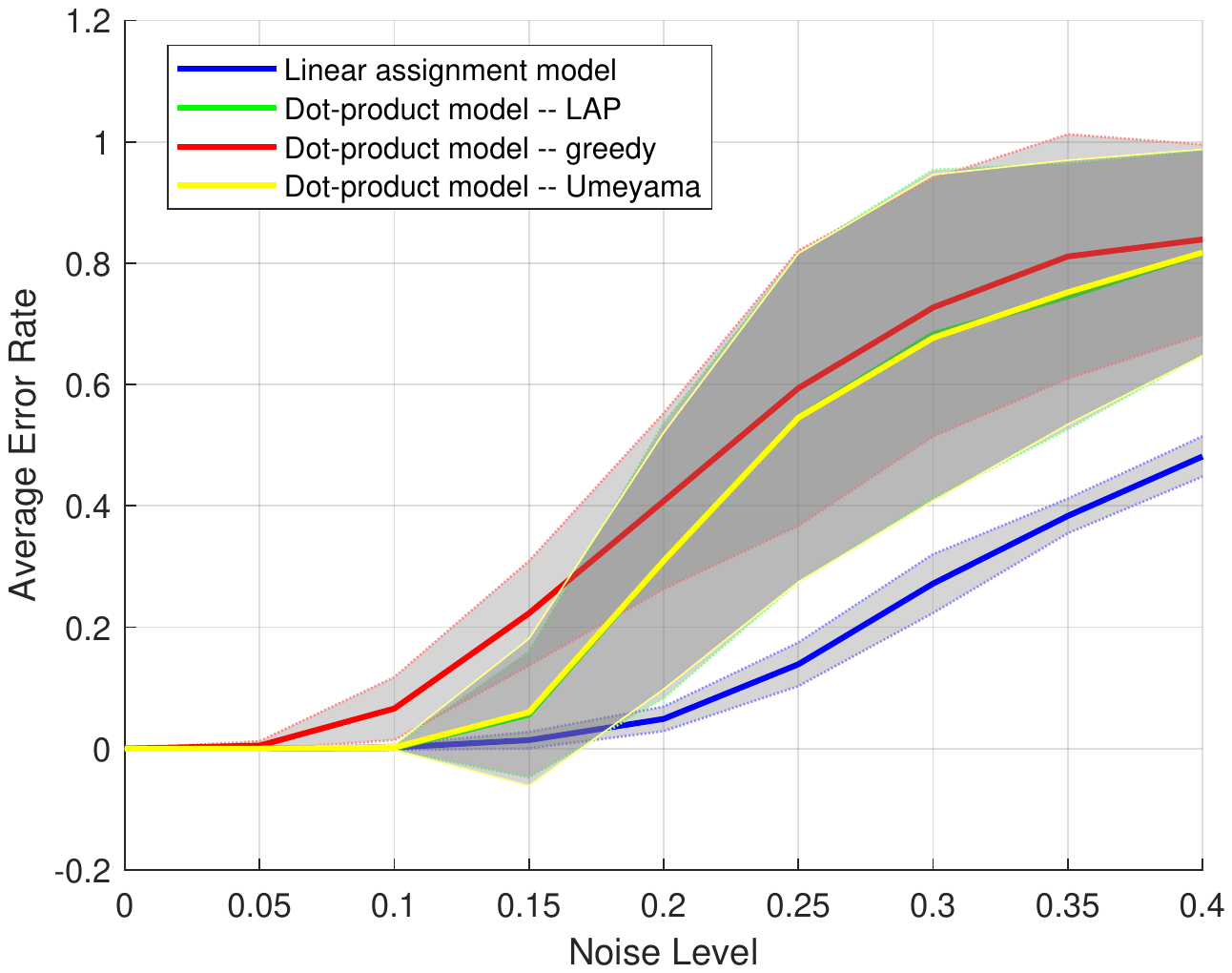}}
	\caption{Comparison of dot product model and linear assignment model (averaged over 10 random instances). In the latter, the blue curve corresponds to the MLE \prettyref{eq:LAP}.}
	\label{fig:match}
\end{figure}

As the dimension increases, it becomes more difficult and computationally more expensive to discretize $O(d)$. 
Instead, we take a different approach. Note that in the noiseless case ($Y=\Pi^* X$), as long as all singular values have multiplicity one, we have $B^{1/2} = \Pi^* A^{1/2} Q$ for some $Q$ in 
\begin{equation}
\integers_2^{\otimes d} = \{\diag(q_i): q_i \in \{\pm 1\}\}.    
\label{eq:Z2d}
\end{equation}
 As such, in the noiseless case it suffices to restrict the inner maximization of \prettyref{eq:MLEapprox1} to the subgroup $\integers_2^{\otimes d}$ corresponding to coordinate reflections. Since the noise is weak in the low-dimensional setting, we continue to apply this heuristic by computing
\begin{equation}
\hat{\Pi}_{\mathrm{AML}, \integers_2^{\otimes d}} = \arg\max_{\Pi \in \fS_n} \max_{Q\in \integers_2^{\otimes d}} \Iprod{B^{1/2}}{\Pi A^{1/2} Q},
\label{eq:MLEapprox-Z2}
\end{equation}
which turns out to work very well in practice. 
Taking this method one step further, notice that in the low-dimensional regime, all non-zero singular values of $X$ and $Y$ are tightly concentrated on the same value $\sqrt{n}$. If we ignore the singular values and simply replace $A^{1/2}$ and $B^{1/2}$ by their left singular vectors $U=[u_1,\ldots,u_d]$ and $V=[v_1,\ldots,v_d]$, \prettyref{eq:MLEapprox-Z2} can be written more explicitly as
\begin{equation}
\hat{\Pi}_{\mathrm{Umeyama}} = \arg\max_{\Pi \in \fS_n} \max_{q \in \{\pm1\}^d}
\iprod{\Pi}{\sum_{i=1}^d q_i  v_{i} u_i^\top},
\label{eq:umeyama}
\end{equation}
which, somewhat unexpectedly, coincides with the celebrated Umeyama algorithm~\cite{umeyama1988eigendecomposition}, a specific type of spectral method that is widely used in practice for graph matching.
In \prettyref{fig:match_4D} we compare 
for $n=200$ and $d=4$. 
Consistent with \prettyref{thm:main}, the error rates in the dot-product model and the linear assignment model are both near zero until $\sigma$ exceeds a certain threshold, after which the former departs from the latter. 
Finally, comparing \prettyref{fig:match_2D} and  \prettyref{fig:match_4D} confirms that 
the reconstruction threshold improves as the latent dimension increases as predicted by \prettyref{thm:main}.


\section{Discussion}
\label{sec:discuss}

In this paper we studied the problem of graph matching 
in the special case of correlated complete weighted graphs in the dot product and distance model, as  a first step towards the more challenging case of random dot-product graphs and random geometric graphs.
Within the confines of the present paper, there still remain a number of interesting directions and open problems which we discuss below.

\paragraph{Non-isotropic distribution}
The present paper assumes the latent coordinates $X_i$'s and $Y_i$'s are  isotropic Gaussians.
For the linear assignment model, \cite{dai2019database,dai2020achievability} has considered a more general setup where 
$X_i \iiddistr  N(0,\Sigma)$ for some covariance matrix $\Sigma$. 
As explained in~\cite[Appendix A]{dai2019database}, it is not hard to see, based on a simple reduction argument (by scaling both $X_i$'s and $Y_i$'s with $\Sigma^{1/2}$ and add noise if needed), that as long as the singular values of $\Sigma$ are bounded from above and below, the information-theoretic limits in terms of $\sigma$ remain unchanged.
For the dot product or distance model, this is also true but less obvious -- see \prettyref{app:nonisotropic} for a proof.

While the statistical limits in the nonisotropic case remain the same, potentially it allows more computationally tractable algorithms to succeed.
For example, the spectral method recently proposed in \cite{FMWX19a,FMWX19b} finds a matching by rounding  the so-called GRAMPA similarity matrix 
\begin{equation}
X = \sum_{i,j=1}^n \frac{\Iprod{u_i}{\ones}\Iprod{v_j}{\ones}}{(\lambda_i-\mu_j)^2+\eta^2} u_i v_j^\top.
\label{eq:GRAMPA}    
\end{equation}
Here $A=\sum \lambda_i u_iu_i^\top$ and 
$B=\sum \mu_j v_jv_j^\top$ are 
the SVD of the observed weighted adjacency matrices, and $\eta$ is a small regularization parameter.
In the isotropic case, applying this algorithm to the dot-product model is unlikely to achieve the optimal threshold in \prettyref{thm:main}. The reason is that in the low-dimensional regime of small $d$, both $A$ and $B$ and rank-$d$ and all  singular values $\lambda_i$'s and  $\mu_j $'s
are largely concentrated on the same value of $\sqrt{n}$. As such, the similarity matrix \prettyref{eq:GRAMPA} degenerates into 
$X \approx \frac{1}{n\eta^2} \sum_{i,j=1}^n
\lambda_i \mu_j \Iprod{u_i}{\ones}\Iprod{v_j}{\ones}  u_i v_j^\top \propto a b^\top$, where 
$a = A\ones$ and $b = B\ones$ are the row-sum vectors.
Rounding $ab^\top$ to a permutation matrix is equivalent to ``degree-matching'', that is, finding the permutation by sorting $a$ and $b$, which can only tolerate $\sigma = n^{-c}$ type of noise level, for constant $c$ \emph{independent of} the dimension $d$, due to the small spacing in the order statistics \cite{ding2018efficient}.
However, in the nonisotropic case where $\Sigma$ has distinct singular values, we expect $A$ and $B$ to have descent spectral gaps and the spectral method \prettyref{eq:GRAMPA} may succeed at the dimension-dependent thresholds of \prettyref{thm:main}. A theoretical justification of this heuristic is outside the scope of this paper.

\paragraph{High-dimensional regime}

Recall the exact MLE \prettyref{eq:MLE-dotprod}, wherein the objective function is an average over the Haar measure on $O(d)$, 
can be approximated by  \prettyref{eq:MLEapprox} for small $\sigma$.
Next, we derive its large-$\sigma$ approximation.
Rewriting the objective function in \prettyref{eq:MLE-dotprod}
as 
$\Expect[\exp(\frac{1}{\sigma^2} \Iprod{B^{1/2}}{\Pi A^{1/2} \mathbf{Q}})]$ 
for a random uniform $\mathbf{Q} \in O(d)$
and taking its second-order Taylor expansion for large $\sigma$, we get 
\[
\Expect\qth{\exp\pth{\frac{1}{\sigma^2} \Iprod{B^{1/2}}{\Pi A^{1/2} \mathbf{Q}}}}
= 1 + 
\frac{1}{2d\sigma^4} \Iprod{B}{\Pi A \Pi^\top}
+ o(\sigma^{-4}),
\]
where we applied 
$\Expect[\Iprod{\mathbf{Q}}{X}]=0$,  $\Expect[\Iprod{\mathbf{Q}}{X}^2]=\Fnorm{X}^2/d$, and $\fnorm{(A^{1/2})^\top \Pi^\top B^{1/2}}^2=\Iprod{B}{\Pi A \Pi^\top}$.
This expansion suggests that for large $\sigma$ (which can be afforded in the high-dimensional regime of $d \gg  \log n$), the MLE is approximated by the solution to the following QAP:
\begin{equation}
\hat{\Pi}_{\mathrm{QAP}} = \arg\max_{\Pi \in \fS_n} \Iprod{B}{\Pi A \Pi^\top}.
\label{eq:QAP}
\end{equation}
This observation aligns with the better studied correlated \ER models or correlated Gaussian Wigner models, where the MLE is exactly given by the QAP \prettyref{eq:QAP}.

To further compare with the estimator \prettyref{eq:MLEapprox} that has been shown optimal in low dimensions, 
let us rewrite \prettyref{eq:QAP} in a form that parallels \prettyref{eq:MLEapprox1}:
\begin{equation}
\hat{\Pi}_{\mathrm{QAP}} = 
\arg\max_{\Pi \in \fS_n} \fnorm{(A^{1/2})^\top \Pi^\top B^{1/2}}
=
\arg\max_{\Pi \in \fS_n} \max_{\fnorm{Q}\leq 1} \Iprod{B^{1/2}}{\Pi A^{1/2} Q}.
\label{eq:MLEapprox3}
\end{equation}
In contrast, the dual variable $Q$ in  \prettyref{eq:MLEapprox1} is constrained to be an orthogonal matrix, which, as discussed in the proof sketch in \prettyref{sec:positve-sketch}, is crucial for the proof of \prettyref{thm:main}.
Overall, the above evidence points to the potential suboptimality of QAP in low and moderate-dimensional regime of $d \lesssim \log n$ and its potential optimality in the high-dimensional regime of $d \gg \log n$.

\paragraph{Practical algorithms}

As demonstrated by extensive numerical experiments in~\cite[Sec.~4.2]{FMWX19a}, for correlated random graph models with iid pairs of edge weights, the Umeyama algorithm \prettyref{eq:umeyama} significantly improves over 
classical ``low-rank'' spectral methods involving only the top few eigenvectors, but still lags behind the more recent spectral methods such as the GRAMPA algorithm~\prettyref{eq:GRAMPA} that uses all pairs of eigenvalues and eigenvectors. Surprisingly, in the low-dimensional dot product model with $d=o(\log n)$, while the GRAMPA algorithm is expected to perform poorly, empirical result in \prettyref{sec:exp} indicates that the Umeyama method actually works very well in this setting. 
In fact, it is not hard to show
that the  Umeyama algorithm returns the true permutation
with high probability  in the noiseless case of $\sigma=0$;
however, understanding its theoretical performance in the noisy setting remains open. 

\appendix
\section{Further related work}
	\label{app:related}
The present paper bridges several streams of literature such as planted matching,  feature matching, Procrustes matching, and graph matching, which we describe below.

 \paragraph*{Planted matching and feature matching} The planted matching problem aims to recover a perfect matching hidden in a weighted complete $n\times n$ bipartite graph, where the edge weights are independently drawn from either  $\calP$ or $\calQ$ depending on whether edges are on the hidden matching or not. Originally proposed by~\cite{Chertkov2010} to model the application of object tracking, a sharp phase transition from almost perfect recovery to partial recovery is conjectured to exist for the special case where $\calP$ is a folded Gaussian and $\calQ$ is a uniform distribution over $[0,n]$.
A recent line of work initiated by~\cite{moharrami2021planted} and followed by~\cite{semerjian2020recovery,DWXY21} has successfully resolved the conjecture and characterized the sharp threshold for general distributions. 

Despite these fascinating advances, they crucially rely on the independent weight assumption which does not account for the latent geometry in the object tracking applications. As a remedy, the linear assignment model~\prettyref{eq:model-LAP} was proposed and studied by~\cite{kunisky2022strong} as a geometric model for planted matching, where the edge weights are pairwise inner products and no longer independent. 
In the low-dimensional setting of $d=o(\log n)$, the MLE is shown to achieve perfect recovery when $\sigma=o(n^{-2/d})$ and almost perfect recovery when $\sigma=o(n^{-1/d})$. Further bounds on the number of errors made by MLE and recovery guarantees in the high-dimensional setting are provided. 
However, the necessary conditions derived in \cite{kunisky2022strong} only pertain to the MLE, leaving open the possibility that almost perfect recovery might be attained by other algorithms at lower threshold.
This is resolved in the negative by the information-theoretic converse in \prettyref{thm:opt}, showing that $\sigma=o(n^{-1/d})$ is necessary for any algorithm to achieve almost perfect recovery. 
Along the way, we also slightly improve the necessary condition for perfect recovery from $\sigma=O(n^{-2/d})$ to $\sigma=o(n^{-2/d})$.


The  linear assignment model~\prettyref{eq:model-LAP} was in fact studied earlier in \cite{dai2019database,dai2020achievability} in a different context of feature matching, where $X_i$'s and $Y_i$'s are viewed as two correlated Gaussian feature vectors in $\reals^d$ and the goal is to find their best alignment.
It is shown in~\cite{dai2019database} that perfect recovery is possible when $\frac{d}{4} \log \left(1+\sigma^{-2} \right) - \log n \to +\infty$, and impossible 
when $\frac{d}{4} \log \left(1+\sigma^{-2} \right) \le (1-\Omega(1))\log n$ and $1 \ll d =O(\log n)$.\footnote{While the impossibility result in \cite[Theorem 2]{dai2019database}  only states the assumption that $d \gg 1$, its proof,
specifically the proof of \cite[Lemma 4.5]{dai2019database}, implicitly assumes $\sigma=O(1)$ which further implies $d=O(\log n)$.} In comparison, the necessary condition in~\prettyref{thm:exact_nec} is tighter and 
holds for any $d$, agreeing with their sufficient condition within an additive $\log d$ factor. 
It is also shown in~\cite{dai2020achievability} that almost perfect recovery is possible when $\frac{d}{2} \log \left(1+\sigma^{-2} \right) \ge (1+\epsilon) \log n$ in the high-dimensional regime
$d=\omega(\log n)$ for a small constant $\epsilon>0$. This matches our necessary condition in~\prettyref{prop:impossiblity} with a sharp constant.



Related problems on feature matching were also studied in the statistics literature. For example, \cite{collier2016minimax} studies the model of observing $Y=X+\sigma Z$ and $Y'=\Pi^* X+ \sigma Z'$, where 
$Z,Z'$ are two independent random Gaussian matrices
and $X$ is deterministic.  The minimum separation (in Euclidean distance) of rows of $X$ needed for perfect recovery, denoted by $\kappa$, is shown to be on the order of  $\sigma \max\{ (\log n)^{1/2}, (d\log n)^{1/4} \}$. Note that in the low-dimensional regime $d=o(\log n)$, this condition is comparable to our threshold for perfect recovery $\sigma=o(n^{-2/d})$, as the typical value of $\kappa$ scales as $n^{-2/d}$ when $X$ is Gaussian. However, the average-case setup is more challenging as $\kappa$ can be atypically small due to the stochastic variation of $X$.

\paragraph*{Procrustes matching}	
Our dot-product model is also closely related to the problem of Procrustes matching, which finds numerous applications in natural language processing
and computer vision~\cite{rangarajan1997softassign,maron2016point,dym2017exact,grave2019unsupervised}. Given two point clouds stacked as rows of $X$ and $Y$, Procrustes matching aims to find an orthogonal matrix $Q \in O(d)$
and a permutation $\Pi \in \fS_n$ that minimizes the Euclidean distance between the point
clouds, \ie, $\min_{\Pi \in \fS_n} \min_{Q \in O(d)} \fnorm{Y Q-\Pi X }^2$.
As observed in~\cite{grave2019unsupervised}, this is equivalent to $\max_{\Pi \in \fS_n} \max_{Q \in O(d)} \iprod{Y Q}{\Pi X}$,
which further reduces to $\max_{\Pi \in \fS_n} \|X^\top \Pi^\top Y\|_\ast$. Thus our approximate MLE 
\prettyref{eq:MLEapprox}
under the dot-product model is equivalent to Procrustes matching on $A^{1/2}$ and $B^{1/2}$. 
A semi-definite programming relaxation is proposed in~\cite{maron2016point} and further shown to return the optimal solution 
in the noiseless case 
when $X$ is generic and asymmetric~\cite{maron2016point,dym2017exact}. In contrast, the more recent work \cite{grave2019unsupervised} proposes an iterative algorithm  based on the alternating maximization over $\Pi$ and $Q$ with an initialization provided by solving a doubly-stochastic relaxation of the QAP $\max_{\Pi \in \fS_n} \fnorm{X^\top \Pi^\top Y}^2$. Its performance is empirically evaluated on real datasets, but no theoretical performance guarantee is provided. Since the dot-product model is equivalent to the statistical model for Procrustes matching, where $Y=\Pi^* X Q+ \sigma Z$ for a random permutation $\Pi^*$ and orthogonal matrix $Q$, our results in \prettyref{thm:main} and \prettyref{thm:opt} thus characterize the statistical limits of Procrustes matching.


 \paragraph*{Graph matching}
 There has been a recent surge of interest in understanding the information-theoretic and algorithmic limits of random graph matching~\cite{cullina2016improved,cullina2017exact,Hall2020partial,wu2021settling,ding2018efficient,barak2019nearly,FMWX19a,FMWX19b,ganassali2020tree,ganassali2021correlation,mao2021random,mao2021exact}, which 
is an average-case model for the QAP and a noisy version of random graph isomorphism \cite{babai1980random}. Most of the existing work is restricted to the correlated \ER-type models in which $\left( A_{\pi^*(i)\pi^*(j)}, B_{ij}\right)$ are iid pairs of two correlated Bernoulli or Gaussian random variables. In this case, the maximum likelihood estimator reduces to solving the QAP \prettyref{eq:QAP}.
Sharp information-theoretic limits are derived by analyzing this QAP~\cite{cullina2016improved,cullina2017exact,ganassali2021sharp,wu2021settling}
 and various efficient algorithms are developed based on its spectral or convex relaxations~\cite{umeyama1988eigendecomposition,zaslavskiy2008path,aflalo2015convex,vogelstein2015fast,lyzinski2016graph,dym2017ds++,FMWX19a,FMWX19b}.
 However, as discussed in \prettyref{sec:discuss}, for geometric models such as the dot-product model, the QAP 
 is the high-noise approximation of the MLE \prettyref{eq:MLE-dotprod}, which differs from the low-noise approximation~\prettyref{eq:MLE-dotprod} that is shown to be optimal in the low-dimensional regime of $d=o(\log n)$. 
 This observation suggests that for geometric models one may need to  
 rethink the algorithm design and move beyond the QAP-inspired methods.


\section{Maximal likelihood estimator in the dot-product model}
\label{app:MLE}
To compute the ``likelihood'' of the observation $(A,B)$ given the ground truth $\Pi^*$, it is useful to keep in mind of the graphical model
\[
\begin{tikzcd}
\Pi^* \arrow[r] & Y \arrow[r]  & B \\
& X \arrow[u] \arrow[r] & A
\end{tikzcd}
\]
where $X,Y,\Pi^*$ is related via \prettyref{eq:model-LAP}, $A=XX^\top$, and $B=YY^\top$.




Note that $A$ are $B$ are rank-deficient. To compute the density of $(A,B)$ conditioned on $\Pi^*$ meaningfully, one needs to choose an appropriate reference measure $\mu$ and evaluate the relative density $\frac{\diff P_{A,B|\Pi^*}}{\diff \mu}$.
Let us choose $\mu$ to be the product of the marginal distributions of $A$ and $B$, which does not depend on $\Pi^*$. 
For any rank-$d$ positive semidefinite matrices $A_0$ and $B_0$,
define $A_0^{1/2} \triangleq U_0 \Lambda^{1/2}$ and $B_0^{1/2} \triangleq V_0 D^{1/2}$ based on the SVD
$A_0 = U_0 \Lambda_0^{1/2} Q_0^\top$ and $B_0 = V_0 D_0 O_0^\top$, 
where $Q_0,O_0\in O(d)$ and $U_0,V_0 \in V_{n,d} \triangleq \{U\in\reals^{n\times d}: U^\top U = I_d\}$ (the Stiefel manifold). We aim to show 
\begin{equation}
\frac{\diff P_{A,B|\Pi^*}(A_0,B_0|\Pi)}{\diff \mu(A_0,B_0)} = h(A_0,B_0) \int_{O(d)} \diff Q \exp\pth{\frac{\Iprod{B_0^{1/2}}{\Pi A_0^{1/2} Q}}{\sigma^2}}
\label{eq:likelihood-dotprod}
\end{equation}
for some fixed function $h$, where the integral is with respect to the Haar measure on $O(d)$. 
This justifies the MLE in \prettyref{eq:MLE-dotprod} for the dot-product model.

To show \prettyref{eq:likelihood-dotprod}, denote by $N_\delta(U_0) = \{U \in V_{n,d}: \fnorm{U-U_0} \leq \delta\}$  and $N_\delta(\Lambda_0) = \{\Lambda \text{ diagonal}: \|\Lambda-\Lambda_0\|_{\ell_\infty}\leq \delta\}$  neighborhoods of $U_0$ and $\Lambda_0$ respectively. (Their specific definitions are not crucial.)
Consider a $\delta$-neighborhood of $A_0$ of the following form:
\[
N_\delta(A_0) \triangleq \{U \Lambda U^\top: U \in N_\delta(U_0), \Lambda \in N_\delta(\Lambda_0)\}
\]
and similarly define $N_\delta(B_0)$.
Write the SVD for $X$ as $X= U R Q^\top$, where $U\in V_{n,d}, Q\in O(d)$ and the diagonal matrix $R$ are mutually independent; in particular, $Q$ is uniformly distributed over $O(d)$.
Then
for constant $C=C(n,d,\sigma)$,
\begin{align*}
&~\Prob[A \in N_\delta(A_0),B \in N_\delta(B_0) | \Pi^*=\Pi] \\
= & ~ \Expect[\indc{XX^\top \in N_\delta(A_0)} \indc{YY^\top \in N_\delta(B_0)}  | \Pi^*=\Pi] \\
= & ~ \Expect[\indc{U \in N_\delta(U_0)} \indc{R \in N_\delta(D_0^{1/2})}  \indc{YY^\top \in N_\delta(B_0)}  | \Pi^*=\Pi] \\
= & ~ C\cdot \Expect\qth{\indc{U \in N_\delta(U_0)} \indc{R \in N_\delta(D_0^{1/2})}  \int_{\reals^{n\times d}} \diff y \indc{yy^\top \in N_\delta(B_0)} \exp\pth{-\frac{\fnorm{y-\Pi U RQ^\top}^2}{2\sigma^2}} } \\
= & ~ C \cdot \Expect\qth{\indc{U \in N_\delta(U_0)} \indc{R \in N_\delta(D_0^{1/2})} \int_{\reals^{n\times d}} \diff y \indc{yy^\top \in N_\delta(B_0)} 
\exp\pth{-\frac{\fnorm{y}^2+ \fnorm{R}^2}{2\sigma^2}} F(y,\Pi U R)},
\end{align*}
where $F:\reals^{n\times d}\times \reals^{n\times d}\to\reals_+$ is defined by
\[
F(y,x)\triangleq \Expect_Q\qth{\exp\pth{\frac{\Iprod{y}{x Q^\top}}{\sigma^2}}}  = \int_{O(d)} \diff Q \exp\pth{\frac{\Iprod{y}{xQ^\top}}{\sigma^2}}.
\]
Note that this function is continuous, strictly positive, and right-invariant, in the sense that $F(YO,XO')=F(Y,X)$ for any $O,O'\in O(d)$. Thus, as $\delta \to 0$, we have
for some constant $C'=C'(n,d,\sigma)$,
\begin{align*}
&~\Prob[A \in N_\delta(A_0),B \in N_\delta(B_0) | \Pi^*=\Pi] \\
= & ~ (1+o(1)) \underbrace{C'  \exp\pth{\frac{\Tr(A_0)}{2\sigma^2} - \frac{\Tr(B_0)}{2\sigma^2(\sigma^2+1)}  }}_{\triangleq h(A_0,B_0)} F(B_0^{1/2},\Pi A_0^{1/2}) \\
& \cdot \underbrace{\Expect\qth{\indc{U \in N_\delta(U_0)} \indc{R \in N_\delta(D_0^{1/2})} } \cdot (2\pi (1+\sigma^2))^{-nd/2}
 \int_{\reals^{n\times d}} \diff y \indc{yy^\top \in N_\delta(B_0)}  \exp\pth{-\frac{\fnorm{y}^2}{2(1+\sigma^2)}}}_{\mu[A \in N_\delta(A_0),B \in N_\delta(B_0)]},
\end{align*}
proving \prettyref{eq:likelihood-dotprod}.

\section{Analysis of approximate maximum likelihood}
\label{sec:positve}

In this section we prove 
Theorem \ref{thm:main} for the dot product model. The proof of \prettyref{thm:distance} for the distance model follows the same program and is postponed to \prettyref{app:distance}.

\subsection{Discretization of orthogonal group}


We first prove \prettyref{lem:Net_Error} on 
the approximation of nuclear norm on a discretization of $O(d)$.

\begin{proof}[Proof of Lemma \ref{lem:Net_Error}]
Consider the singular value decomposition $ A=UDV^\top $, where $U,V\in O(d)$ and $D$ is diagonal. Then the nuclear norm $ \|A\|_*=\max_{Q \in O(d)} \iprod{A}{Q} = \Tr(D)$ is attained at $ Q_*=UV^\top $. Pick an element $ Q \in N $ with $ Q=Q_*+\Delta $, where $ \|\Delta\| \leq \delta $. By orthogonality of $Q$ and $Q_*$, we have
\begin{equation}
\Delta Q_*^\top + Q_* \Delta^\top + \Delta \Delta^\top=0.
\label{eq:Q*_Delta}
\end{equation}
Note that
\begin{equation}
A Q_*^\top = Q_* A^\top = U D U^\top =:B.
\label{eq:A_Q*}
\end{equation}
Also, we have
$$ \iprod{A}{\Delta} = \iprod{A Q_*^\top}{\Delta Q_*^\top},\ \ \ \iprod{A}{\Delta}=\iprod{A^\top}{\Delta^\top}=\iprod{Q_* A^\top}{Q_* \Delta^\top}. $$
Adding the above equations and applying \prettyref{eq:Q*_Delta}-\prettyref{eq:A_Q*} yield
$$ \iprod{A}{\Delta}=\frac{1}{2} \iprod{B}{\Delta Q_*^\top + Q_* \Delta^\top} = -\frac{1}{2} \iprod{B}{\Delta \Delta^\top}. $$
This implies
$$ \left| \iprod{A}{\Delta} \right| \leq \frac{1}{2} \|B\|_* \|\Delta\|^2 = \frac{1}{2} \|A\|_* \|\Delta\|^2,  $$
which completes the proof.
\end{proof}

Next we give a specific construction of a $ \delta $-net for $O(d)$ that is suitable for the purpose of proving \prettyref{thm:main}. 
Since orthogomal matrices are normal, by the spectral decomposition theorem, each orthogonal matrix $ Q \in O(d) $ can be written as $ Q=U^* \Lambda U $, where $ \Lambda=\diag(e^{\ii \theta_1},\dots,e^{\ii \theta_d}) $ with $ \theta_j \in [-\pi,\pi] $ for all $ j=1,\dots,d $ and $ U \in U(d) $ is an unitary matrix.
To construct a net for $O(d)$, we first discretize the eigenvalues uniformly and then discretize the eigenvectors according to the optimal local entropy of orthogonal matrices with prescribed eigenvalues.

For any fixed $ \delta>0 $, let $ \Theta \triangleq \{\theta_k=\tfrac{k \delta}{4}:k=\lfloor -\tfrac{4 \pi}{\delta} \rfloor, \lfloor -\tfrac{4 \pi}{\delta} \rfloor +1 ,\dots,\lceil \tfrac{4 \pi}{\delta} \rceil\} $. Then the set 
$$ \mathbf{\Lambda} \triangleq \left\{ (\lambda_1,\dots,\lambda_d) \in \C^d: \lambda_j=e^{\ii \theta_j}, \theta_j \in \Theta, j=1,\dots,d \right\} $$ 
is a $ \tfrac{\delta}{4} $-net in $ \ell_\infty $ norm for the set of all possible spectrum $ \{(\lambda_1,\dots,\lambda_d) \in \C^d:|\lambda_j|=1\} $. For each $ (\lambda_1,\dots,\lambda_d) \in \C^d $, let $ O(\lambda_1,\dots,\lambda_d) $ denote the set of orthogonal matrices with a prescribed spectrum $ \{\lambda_j\}_{j=1}^d $, i.e.
$$ O(\lambda_1,\dots,\lambda_d) \triangleq \left\{ O \in O(d) : \lambda_i(O)=\lambda_i,i=1,\dots,d \right\}, $$
where $ \lambda_i(O) $'s are the eigenvalues of $ O $ sorted in the counterclockwise way from $ -\pi $ to $ \pi $.
Similarly, define $ U(\lambda_1,\dots,\lambda_d) $ to be the set of unitary matrices with a given spectrum
$$ U(\lambda_1,\dots,\lambda_d) \triangleq \{U^* \diag(\lambda_1,\ldots,\lambda_d) U: U\in U(d)\}. $$
Then $ O(\lambda_1,\dots,\lambda_d) \subset U(\lambda_1,\dots,\lambda_d) \subset U(d) $. Let $ N'(\lambda_1,\dots,\lambda_d) $ be the optimal $ \tfrac{\delta}{4} $-net in operator norm for $ U(\lambda_1,\dots,\lambda_d) $, and let $ N(\lambda_1,\dots,\lambda_d) $ be the projection (with respect to $\Opnorm{\cdot}$) of $ N'(\lambda_1,\dots,\lambda_d) $
to $O(d)$. Define
\begin{equation}
N \triangleq \bigcup_{(\lambda_1,\dots,\lambda_d) \in \mathbf{\Lambda}} N(\lambda_1,\dots,\lambda_d). 
\label{eq:net}
\end{equation}
We claim that $ N $ is a $ \delta $-net in operator norm for the orthogonal group.

\begin{lemma}
The set $ N \subset O(d) $ defined in \prettyref{eq:net} is a $ \delta $-net in operator norm for $ O(d) $.
\end{lemma}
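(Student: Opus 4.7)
The plan is to show that every $Q \in O(d)$ lies within operator-norm distance $\delta$ of some element of $N$ by passing through three controlled approximations: first, round the spectrum of $Q$ to an element of $\mathbf{\Lambda}$; second, use the unitary net $N'$ at that rounded spectrum to approximate the eigenbasis; third, project the resulting unitary matrix onto $O(d)$. I budget the three errors as $\delta/4$, $\delta/4$, and $\delta/2$, summing to $\delta$.

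Concretely, fix $Q \in O(d)$ and use its spectral decomposition $Q = U^* \Lambda U$ with $U \in U(d)$ and $\Lambda = \diag(e^{\ii \theta_1},\ldots,e^{\ii \theta_d})$. Choose $(\lambda'_1,\ldots,\lambda'_d) \in \mathbf{\Lambda}$ with $\max_j |e^{\ii \theta_j} - \lambda'_j| \le \delta/4$, possible because $\mathbf{\Lambda}$ is a $\delta/4$-net in $\ell_\infty$. Set $\Lambda' = \diag(\lambda'_1,\ldots,\lambda'_d)$ and $Q' := U^* \Lambda' U$, so that $Q' \in U(\lambda'_1,\ldots,\lambda'_d)$. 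Unitary invariance of the operator norm gives $\opnorm{Q - Q'} = \opnorm{\Lambda - \Lambda'} \le \delta/4$. Since $N'(\lambda'_1,\ldots,\lambda'_d)$ is a $\delta/4$-net (in operator norm) for $U(\lambda'_1,\ldots,\lambda'_d)$, I then pick $\tilde Q' \in N'(\lambda'_1,\ldots,\lambda'_d)$ with $\opnorm{Q' - \tilde Q'} \le \delta/4$. Finally, let $\hat Q$ be a nearest element of $O(d)$ to $\tilde Q'$ in operator norm; by definition of $N(\lambda'_1,\ldots,\lambda'_d)$ in \eqref{eq:net} as the operator-norm projection of $N'(\lambda'_1,\ldots,\lambda'_d)$ onto $O(d)$, we have $\hat Q \in N$.

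The triangle inequality then yields $\opnorm{Q - \hat Q} \le \opnorm{Q - Q'} + \opnorm{Q' - \tilde Q'} + \opnorm{\tilde Q' - \hat Q}$. The first two terms are each at most $\delta/4$ by construction. For the third, $Q \in O(d)$ is a competitor in the projection defining $\hat Q$, so $\opnorm{\tilde Q' - \hat Q} \le \opnorm{\tilde Q' - Q} \le \opnorm{\tilde Q' - Q'} + \opnorm{Q' - Q} \le \delta/2$. Summing the three contributions gives $\opnorm{Q - \hat Q} \le \delta$, proving the lemma.

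The only real subtlety is that after discretizing the spectrum, the matrix $Q'$ lives in $U(d)$ but generally not in $O(d)$, since the rounded eigenvalues $(\lambda'_1,\ldots,\lambda'_d)$ need not come in complex-conjugate pairs. This is precisely why the construction in \eqref{eq:net} covers the full unitary orbit $U(\lambda_1,\ldots,\lambda_d)$ first and then projects to $O(d)$, rather than attempting to cover the orthogonal orbit directly. Beyond bookkeeping these three error contributions I do not anticipate any further difficulty.
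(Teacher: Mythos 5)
Your proof is correct and takes essentially the same route as the paper's: round the spectrum to an element of $\mathbf{\Lambda}$, approximate within the unitary orbit using the net $N'$, and bound the projection step onto $O(d)$ by using $Q$ itself as a competitor in the projection. The only difference is bookkeeping---your $\delta/4+\delta/4+\delta/2$ split versus the paper's factor-of-two bound $\|Q-\tilde Q\|\le 2\|Q-\tilde U^*\tilde\Lambda\tilde U\|$---which gives the same total error $\delta$.
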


\begin{proof}
Given $Q\in O(d)$, let its eigenvalue decomposition be $Q=U^* \Lambda U$. 
where $\Lambda=\diag(\lambda_1,\ldots,\lambda_d)$. Then there exists 
$\tilde \Lambda=\diag(\tilde \lambda_1,\ldots,\tilde \lambda_d)$ where $(\tilde \lambda_1,\ldots,\tilde \lambda_d) \in \mathbf{\Lambda}$, such that
$\|\Lambda-\tilde \Lambda\|\leq \tfrac{\delta}{4}$.
By definition, there exists $ \tilde U \in U(d)$ such that 
$\tilde U^*\tilde \Lambda \tilde U\in N'(\tilde \lambda_1,\ldots,\tilde \lambda_d)$ and 
$\|\tilde U^*\tilde \Lambda \tilde U- U^*\tilde \Lambda  U\| \leq \tfrac{\delta}{4}$. 
Let $\tilde Q \in N$ denote the projection of $\tilde U^*\tilde \Lambda \tilde U$.
Then
\begin{align*}
\|Q-\tilde Q\| \leq & ~ \| Q -\tilde U^*\tilde \Lambda \tilde U\|+ \| \tilde U^*\tilde \Lambda \tilde U -\tilde Q\|\\
\leq & ~ 2\|\tilde U^*\tilde \Lambda \tilde U- Q\| \\
= & ~ 2\|\tilde U^*\tilde \Lambda \tilde U- U^*\Lambda U\| \\
\leq & ~ 2(\|\tilde U^*\tilde \Lambda \tilde U- U^*\tilde \Lambda U\| + \|U^*(\tilde \Lambda -\Lambda)  U\|) \leq \delta,
\end{align*}
where the second inequality follows from projection.
\end{proof}

The size of this $ \delta $-net is estimated in the following lemma.

\begin{lemma}[Local entropy of $ O(d) $]\label{lem:Net_Size}
For each $ (\lambda_1,\dots,\lambda_d) $ where $ \lambda_\ell=e^{\ii \theta_\ell} $, we have
\begin{equation}
|N(\lambda_1,\dots,\lambda_d)| \leq \pth{1+\frac{2 \max |\theta_\ell|}{\delta}}^{2d^2}
\end{equation}
\end{lemma}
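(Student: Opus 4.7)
The plan is to realize $U(\lambda_1,\dots,\lambda_d)$ as the image of the unitary group $U(d)$ under the map $\phi(U)=U^*\Lambda U$, where $\Lambda=\mathsf{diag}(e^{\ii\theta_1},\ldots,e^{\ii\theta_d})$, and then push forward an external covering of $U(d)$ through $\phi$. Since the nearest-point projection onto $O(d)$ is $1$-Lipschitz in operator norm, it never increases cardinality, so $|N(\lambda_1,\ldots,\lambda_d)|\le|N'(\lambda_1,\ldots,\lambda_d)|$, and it suffices to upper-bound the $\tfrac{\delta}{4}$-covering number of $U(\lambda_1,\ldots,\lambda_d)$ in operator norm.

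The key step is a Lipschitz estimate on $\phi$ whose constant scales with $\max_\ell|\theta_\ell|$, not an absolute constant. Writing $\phi(U)=I+U^*(\Lambda-I)U$ and expanding, I would obtain
\[
\phi(U_1)-\phi(U_2)=(U_1-U_2)^*(\Lambda-I)U_1+U_2^*(\Lambda-I)(U_1-U_2),
\]
and then use $\|\Lambda-I\|_{\mathrm{op}}=\max_\ell|e^{\ii\theta_\ell}-1|\le\max_\ell|\theta_\ell|$ together with $\|U_i\|_{\mathrm{op}}=1$ to conclude $\|\phi(U_1)-\phi(U_2)\|_{\mathrm{op}}\le 2\max_\ell|\theta_\ell|\cdot\|U_1-U_2\|_{\mathrm{op}}$. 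The cancellation of the identity part of $\Lambda$ in this calculation is what pulls out the $\max_\ell|\theta_\ell|$ factor; without it we would only see the trivial $2$.

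Next, since $U(d)$ lies in the unit operator-norm ball of $\mathbb{C}^{d\times d}$, a convex body in a real vector space of dimension $2d^2$, the standard volumetric packing bound gives
\[
\mathcal{N}(U(d),\|\cdot\|_{\mathrm{op}},\eta)\le(1+2/\eta)^{2d^2}.
\]
Choosing $\eta$ so that $2\eta\max_\ell|\theta_\ell|=\delta/4$ and pushing an $\eta$-net of $U(d)$ forward through $\phi$ yields a $\delta/4$-net of $U(\lambda_1,\ldots,\lambda_d)$ of cardinality at most $(1+C\max_\ell|\theta_\ell|/\delta)^{2d^2}$ for a small constant $C$. The residual numerical gap from the stated $C=2$ can be closed by working with the geodesic metric on $U(d)$ (on which $\phi$ is Lipschitz with a slightly smaller constant) and/or by restricting $\phi$ to a fundamental domain for the stabilizer $\mathrm{Stab}(\Lambda)$, exploiting that the true orbit dimension is at most $d^2-d<2d^2$.

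The main obstacle is not any single step in isolation but the bookkeeping of constants: one must get the exact scaling $\max_\ell|\theta_\ell|$ rather than an absolute Lipschitz constant, because this shrinkage is precisely what makes the local entropy vanish as the eigenvalue phases shrink, which is what drives the local-to-global entropy trade-off in the union bound over $\pi$ and $Q$ in the proof of \prettyref{thm:main}.
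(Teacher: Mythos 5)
Your argument is correct in substance but takes a different route from the paper's. The paper simply observes that every element of $U(\lambda_1,\dots,\lambda_d)$ can be written as $I+U^*(\Lambda-I)U$, so the \emph{whole set} is contained in the operator-norm ball of radius $\max_\ell|\theta_\ell|$ around $I$, and then applies the volumetric entropy bound in real dimension $2d^2$ directly to that ball; there is no Lipschitz pushforward at all. You instead parametrize the set as the image of $U(d)$ under the orbit map $\phi(U)=U^*\Lambda U$, prove the Lipschitz estimate $\|\phi(U_1)-\phi(U_2)\|_{\mathrm{op}}\le 2\max_\ell|\theta_\ell|\,\|U_1-U_2\|_{\mathrm{op}}$ (which is correct, and rests on the same cancellation $U^*\Lambda U-I=U^*(\Lambda-I)U$ that the paper exploits), and push forward a volumetric net of $U(d)$. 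Both arguments capture the essential point—the local entropy scales like $\left(\max_\ell|\theta_\ell|/\delta\right)^{O(d^2)}$—but your composition of the net scale with the Lipschitz constant loses a constant factor (you get roughly $16$ in place of the stated $2$), whereas the containment-in-a-ball observation feeds the radius straight into the volume bound with no such loss; your proposed repairs (geodesic metric, fundamental domain for the stabilizer) are not needed and would not cleanly produce the exact constant anyway. The good news is that this constant is immaterial downstream: in the entropy sums of Lemma~\ref{lem:Estimate_MGF} any absolute constant in front of $|\theta_\ell|/\delta$ only contributes factors of the form $C^{d^2}$, which are absorbed since $d=o(\log n)$, so your bound serves the same purpose as the lemma as stated.
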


\begin{proof}
Note that
\begin{equation*}
U(\lambda_1,\dots,\lambda_d) =I+\sth{ U^* \diag \pth{\lambda_1-1,\dots,\lambda_d-1}U:U \in U(d) }
=: I+\tilde{U}(\lambda_1,\dots,\lambda_d).
\end{equation*}
For any matrix $ Q \in \tilde{U}(\lambda_1,\dots,\lambda_d) $, we have
$$ \opnorm{Q}^2 = \max \left| e^{\ii \theta_\ell} -1 \right|^2 = \max |2-2\cos \theta_\ell| \leq \max |\theta_\ell|^2. $$
where $ \Opnorm{\cdot} $ is the the operator norm with respect to $ \C^d \to \C^d $.
This implies
$$ U(\lambda_1,\dots,\lambda_d) \subset \mathrm{B}(I,\max |\theta_\ell|), $$
where $  \mathrm{B}(I,r) $ is the operator norm ball centered at $ I_d $ with radius $ r $.  As a normed vector space over $ \R $, the space of $ d \times d $ complex matrices has dimension $ 2d^2 $ since $ \C^{d \times d} \simeq \R^{2d^2} $. Then the desired result follows from a standard volume bound
(c.f.~e.g.~\cite[Lemma 4.10]{Pisier-book}) 
for the metric entropy
$$ \left| N(\lambda_1,\dots,\lambda_d) \right| \leq \left| N'(\lambda_1,\dots,\lambda_d) \right| \leq \pth{1+\frac{2 \max |\theta_\ell|}{\delta}}^{2d^2}. $$
\end{proof}

\subsection{Moment generating functions and cycle decomposition}

Based on the reduction \prettyref{eq:Prob_Reduction}, it suffices to estimate
$$ \sum_{\Pi \neq I_n} \sum_{(\lambda_1,\dots,\lambda_d) \in \mathbf{\Lambda}} \sum_{Q \in N(\lambda_1,\dots,\lambda_d)} p(\Pi,Q), $$
where
\begin{equation}
p(\Pi,Q) \triangleq \expect \exp \left\{ -\frac{1}{32 \sigma^2} \Fnorm{X-\Pi X Q}^2 \right\}.
\label{eq:MGF}
\end{equation}
This moment generating function (MGF) is estimated in the following lemma.

\begin{lemma}\label{lem:MGF}
For any fixed $ \Pi \in \fS_n $, let $ \calO $ denote the set of orbits of the permutation and $ n_k $ be the number of orbits with length $ k $. 
Let $Q \in O(d)$ and denote by $ e^{\ii \theta_1},\dots,e^{\ii \theta_d} $ the eigenvalues of $ Q $, where $ \theta_1,\dots,\theta_d \in [-\pi,\pi] $.
Then
\begin{equation}
p(\Pi,Q) = \prod_{O \in \calO,|O| \geq 1} a_{|O|}(Q) = \prod_{k=1}^n a_k(Q)^{n_k}, \label{eq:MGF_Compute}
\end{equation}
where
\begin{equation}
a_k(Q) \triangleq (4\sigma)^{kd} \prod_{\ell=1}^d \qth{ (\sqrt{1+4\sigma^2}+2\sigma)^{2k}+(\sqrt{1+4\sigma^2}-2\sigma)^{2k} - 2 \cos(k \theta_\ell)  }^{-1/2}, \label{eq:ak}
\end{equation}
satisfying, for all $ 1 \leq k \leq n $,
\begin{equation}
a_k(Q) \leq a_k(I) \leq (4 \sigma)^{(k-1)d}. \label{eq:ak_Estimate}
\end{equation}
Furthermore,
\begin{equation}
a_1(Q) \leq (C\sigma)^d \prod_{\ell=1}^d \frac{1}{\sigma+|\theta_\ell|}, \label{eq:a1_Estimate}
\end{equation}
where $ C>0 $ is a universal constant independent of $ d,n,\sigma $.
\end{lemma}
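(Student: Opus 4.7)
The plan is to first reduce the MGF over the full matrix $X$ to a product over the cycles of $\Pi$, then compute the per-cycle contribution by a Gaussian integral and a trigonometric product identity, and finally verify the three estimates by elementary manipulations.

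\textbf{Step 1 (cycle decomposition).} Writing $\Pi$ as a disjoint union of orbits $\calO$ and using that the rows of $X$ are iid $\calN(0,I_d)$, the Frobenius norm $\fnorm{X-\Pi X Q}^2$ decouples as a sum over orbits, with each orbit $O=(i_1,\dots,i_k)$ involving only the rows $X_{i_1},\dots,X_{i_k}$. By independence across orbits, $p(\Pi,Q)$ factors as $\prod_{O\in\calO} a_{|O|}(Q)$, and by symmetry this depends only on the cycle lengths, giving \prettyref{eq:MGF_Compute}. It remains to evaluate, for a single cycle of length $k$,
\[
a_k(Q)=\expect\exp\bigl\{-\tfrac{1}{32\sigma^2}\textstyle\sum_{j=1}^{k}\|X_j-X_{j-1}Q\|^2\bigr\},
\]
where indices are taken mod $k$ and $X_1,\dots,X_k\iiddistr\calN(0,I_d)$.

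\textbf{Step 2 (Gaussian integral for a cycle).} Let $C_k$ be the $k\times k$ cyclic shift matrix and let $x=\vecc(X)\in\reals^{kd}$ where $X$ has rows $X_1,\dots,X_k$. Expanding and using orthogonality of $C_k$ and $Q$, the exponent in the above integrand equals $-\tfrac{1}{16\sigma^2}\,x^\top\bigl(I_{kd}-\tfrac12(Q\otimes C_k+Q^\top\otimes C_k^\top)\bigr)x$. Applying the standard formula $\expect e^{-\frac12 x^\top Ax}=\det(I+A)^{-1/2}$ and the fact that $Q\otimes C_k$ and its transpose commute and are simultaneously diagonalizable with eigenvalues $e^{\ii(\theta_\ell+2\pi j/k)}$ for $\ell\in[d]$, $j\in\{0,\dots,k-1\}$, yields
\[
a_k(Q)=\prod_{\ell=1}^{d}\prod_{j=0}^{k-1}\Bigl(1+\tfrac{1-\cos(\theta_\ell+2\pi j/k)}{8\sigma^2}\Bigr)^{-1/2}.
\]

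\textbf{Step 3 (trigonometric product identity).} To match \prettyref{eq:ak}, I would use the identity $\prod_{j=0}^{k-1}(t^2-2t\cos(\beta+2\pi j/k)+1)=t^{2k}-2t^k\cos(k\beta)+1$, obtained from $\prod_{j}(t-e^{\ii(\beta+2\pi j/k)})=t^k-e^{\ii k\beta}$. Dividing by $2t$ inside each factor and solving $\tfrac{t^2+1}{2t}=8\sigma^2+1$ gives $t=a^2$ with $a=\sqrt{1+4\sigma^2}+2\sigma$. Since $ab=1$ with $b=\sqrt{1+4\sigma^2}-2\sigma$, one gets $a^{-2k}=b^{2k}$, and the product over $j$ collapses to $2^{-k}(a^{2k}+b^{2k}-2\cos(k\theta_\ell))$. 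Tracking the remaining factors of $8\sigma^2$ and $2$ recovers exactly $(4\sigma)^{kd}\prod_\ell(a^{2k}+b^{2k}-2\cos(k\theta_\ell))^{-1/2}$, which is \prettyref{eq:ak}. I expect the bookkeeping of powers of $2$ and $\sigma$ here to be the main technical nuisance, but the identity drives the whole computation.

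\textbf{Step 4 (the three estimates).} All three bounds follow from the key observation that $a^{2k}+b^{2k}-2=(a^k-b^k)^2$ (since $(ab)^k=1$). For $a_k(Q)\le a_k(I)$, simply use $\cos(k\theta_\ell)\le 1$ in each factor. For $a_k(I)\le(4\sigma)^{(k-1)d}$, note $a\ge 1\ge b>0$, so $a^k-b^k=(a-b)(a^{k-1}+\cdots+b^{k-1})\ge a-b=4\sigma$, giving $a_k(I)=(4\sigma)^{kd}(a^k-b^k)^{-d}\le(4\sigma)^{(k-1)d}$. For the sharper bound on $a_1(Q)$, write $a^2+b^2-2\cos\theta_\ell=16\sigma^2+2(1-\cos\theta_\ell)$; the elementary inequality $1-\cos\theta\ge 2\theta^2/\pi^2$ on $[-\pi,\pi]$ and $(\alpha+\beta)^2\le 2(\alpha^2+\beta^2)$ then give $a^2+b^2-2\cos\theta_\ell\ge c(\sigma+|\theta_\ell|)^2$ for an absolute constant $c>0$, yielding \prettyref{eq:a1_Estimate}.
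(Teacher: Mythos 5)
Your proposal is correct and follows essentially the same route as the paper's proof: a Gaussian MGF evaluated as $\det(I+A)^{-1/2}$, eigenvalues of the relevant operator expressed as $1-\cos(\theta_\ell+2\pi j/k)$ via roots of unity, the product over $j$ collapsed by the identity $\prod_j\bigl(t-e^{\ii(\beta+2\pi j/k)}\bigr)=t^k-e^{\ii k\beta}$ with the same $a=\sqrt{1+4\sigma^2}+2\sigma$, $b=a^{-1}$, and the same elementary bounds for \prettyref{eq:ak_Estimate} and \prettyref{eq:a1_Estimate}. The only cosmetic difference is that you factor the expectation over orbits first (using independence of the rows) and diagonalize $Q\otimes C_k$ per cycle, whereas the paper diagonalizes $I_{nd}-Q^\top\otimes\Pi$ globally and then reads off the cycle structure from the spectrum of $\Pi$; your bound $a^k-b^k\ge a-b=4\sigma$ is a slightly cleaner way to get \prettyref{eq:ak_Estimate} than the paper's argument.
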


\begin{proof}
For simplicity, denote $ t=\tfrac{1}{32 \sigma^2} $. Let $ x= \vecc(X) \in \R^{nd} $ be the vectorization of $ X $, and note that $ x \sim \calN(0,I_{nd}) $. Through the vectorization, we have
$$ \Fnorm{X-\Pi X Q}^2 = \norm{(I_{nd} - Q^\top \otimes \Pi)x}^2. $$
Let $ H \triangleq I_{nd} - Q^\top \otimes \Pi $, then
\begin{equation}
p(\Pi,Q) = \expect \exp \pth{ -t x^\top H^\top H x } = \qth{ \det \pth{I+2tH^\top H} }^{-\frac{1}{2}}.
\label{eq:MGF_Determinant}
\end{equation}
Note that the eigenvalues of $ H $ are
$$ \lambda_{ij}(H)=1-\lambda_i(Q^\top)\lambda_j(\Pi),\ \ i=1,\dots,d,\ j=1,\dots,n. $$
This leads to
\begin{equation}
p(\Pi,Q) = \prod_{i=1}^d \prod_{j=1}^n \pth{1+2t \left|1-\lambda_i(Q^\top)\lambda_j(\Pi) \right|^2}^{-\frac{1}{2}}. \label{eq:MGF_Expansion}
\end{equation}

Through a cycle decomposition, the spectrum of $ \Pi $ is the same as a block diagonal matrix $ \tilde{\Pi} $ of the following form
$$ \tilde{\Pi} = \diag \pth{P_1^{(1)},\dots,P_{n_1}^{(1)},\dots,P_{1}^{(k)},\dots,P_{n_k}^{(k)},\dots,P_{1}^{(n)},\dots,P_{n_n}^{(n)}}, $$
where $ n_k $ is the number of $ k $-cycles in $ \pi $, and $ P_1^{(k)}=\cdots=P_{n_k}^{(k)} = P^{(k)} $ is a $ k \times k $ circulant matrix given by
\begin{equation*}
P^{(k)}=
\begin{bmatrix}
0 & 1 & \cdots & & 0\\
0 & 0 & 1 & & &\\
\vdots & 0 &0& \ddots & \vdots &\\
& & \ddots & \ddots & 1\\
1 & & \cdots &0 & 0
\end{bmatrix}.
\end{equation*}
It is well known that the eigenvalues of $ P^{(k)} $ are the $ k $-th roots of unity $ \{e^{\ii \frac{2\pi}{k}j}\}_{j=0}^{k-1} $. 
Therefore, the spectrum of $ \Pi $ is the following multiset
\begin{equation}
\mathsf{Spec}(\Pi)=\{e^{\ii \frac{2\pi}{k}j_k} \mbox{ with multiplicity } n_k : 1 \leq k \leq n,j_k=0,\dots,k-1\}. \label{eq:Spectrum_Pi}
\end{equation}
Recall that $ e^{\ii\theta_1},\dots,e^{\ii\theta_d} $ are the eigenvalues of $ Q $. Note that the eigenvalues of $ Q^\top $ are the complex conjugate of the eigenvalues of $ Q $. Combined with \prettyref{eq:MGF_Expansion} and \prettyref{eq:Spectrum_Pi}, we have
\begin{align}
p(\Pi,Q) &= \qth{ \prod_{\ell=1}^d \prod_{k=1}^n \prod_{j=0}^{k-1} \pth{1+2t \left| 1-e^{-\ii \theta_\ell}e^{\ii \frac{2\pi}{k}j}  \right|^2}^{n_k} }^{-1/2} \nonumber \\
&= \prod_{k=1}^n \qth{ \prod_{\ell=1}^d \prod_{j=0}^{k-1} \pth{ 1+4t-4t \cos(-\theta_\ell + \tfrac{2\pi}{k}j) }^{-1/2} }^{n_k} \triangleq \prod_{k=1}^n a_k(Q)^{n_k}. \label{eq:MGF_ak}
\end{align}
Define
$$ f(\theta) \triangleq \prod_{j=0}^{k-1} \pth{ 1+4t-4t \cos(\theta + \tfrac{2\pi}{k}j)}, $$
To simplify $ f(\theta) $, let $ p=\tfrac{\sqrt{1+8t}+1}{2} $ and $ q=\tfrac{\sqrt{1+8t}-1}{2} $ so that $ p^2+q^2=1+4t $ and $ pq=2t $. Thus,
$$ f(\theta) = \prod_{j=0}^{k-1} \pth{ p^2+q^2 -2pq \cos \pth{\tfrac{2\pi}{k}j + \theta} }. $$
Note that
$$ p^k - q^k e^{\ii k\theta}= \prod_{j=0}^{k-1} \pth{p-q e^{\ii \frac{2\pi}{k}j + \ii \theta}}, \ \ p^k- q^k e^{-\ii k\theta}= \prod_{j=0}^{k-1} \pth{p-q e^{\ii \frac{2\pi}{k}j - \ii \theta}}. $$
Multiplying the above two equations gives us
$$ p^{2k}+q^{2k}-2p^k q^k \cos k \theta = \prod_{j=0}^{k-1} \pth{ p^2+q^2 -2pq \cos \pth{\tfrac{2\pi}{k}j + \theta} } = f(\theta). $$
which implies
\begin{align*}
f(\theta) &= \pth{\frac{\sqrt{1+8t}+1}{2}}^{2k} + \pth{\frac{\sqrt{1+8t}-1}{2}}^{2k} - 2 (2t)^k \cos(k\theta)\\
&=\pth{\frac{1}{4\sigma}}^{2k} \qth{ \pth{\sqrt{1+4\sigma^2} + 2\sigma}^{2k} + \pth{\sqrt{1+4\sigma^2} - 2\sigma}^{2k} - 2\cos k\theta }.
\end{align*}
Note that $ a_k(Q) = \prod_{\ell=1}^d f(-\theta_\ell)^{-1/2} $, and therefore we have shown \prettyref{eq:ak}. In particular,
\begin{equation}
a_1(Q) = (4\sigma)^d \prod_{\ell=1}^d (2-2\cos \theta_\ell + 16 \sigma^2)^{-\frac{1}{2}}. \label{eq:a1}
\end{equation}
Since $ \sin^2 \theta \geq \tfrac{\theta^2}{4}  $ for $ \theta \in [-\tfrac{\pi}{2},\tfrac{\pi}{2}] $, we have
\begin{multline*}
\sqrt{2-2\cos \theta_\ell + 16 \sigma^2} = \sqrt{4\sin^2(\theta_\ell/2) + 16 \sigma^2}
\ge \sqrt{2\sin^2(\theta_\ell/2) }+ \sqrt{8\sigma^2}\\
\ge\sqrt{ 2 (\theta_\ell/4)^2} + \sqrt{8 \sigma^2} = \sqrt{2} |\theta_\ell|/4 + 2 \sqrt{2} \sigma
\end{multline*}
Consequently, this gives us \prettyref{eq:a1_Estimate}. 
In general, note that
$$
(\sqrt{1+4\sigma^2} + 2\sigma)^{2k} + (\sqrt{1+4\sigma^2} - 2\sigma)^{2k}-2 \geq (4k\sigma)^2,
$$
which completes the proof for \prettyref{eq:ak_Estimate}. To see this, define $g(x)=x^k-x^{-k}$ which is increasing in $x$. Then 
$$ 
(\sqrt{1+4\sigma^2} + 2\sigma)^{2k} + (\sqrt{1+4\sigma^2} - 2\sigma)^{2k}-2= g\left( \sqrt{1+4\sigma^2}  + 2\sigma \right)^2
\ge g\left( 1 + 2\sigma \right)^2 \ge (4k\sigma)^2,
$$
where the last inequality holds because $(1+a)^k - (1-a)^k \ge 2ak$ for $a\ge 0$. 
Finally, \prettyref{eq:MGF_Compute} follows from \prettyref{eq:MGF_ak}.
\end{proof}


Based on the above representation via cycle decomposition, we have the following estimate for the moment generating function. This estimate is a key result in this paper as it is the basis of both Theorem \ref{thm:main}
and Lemma \ref{lem:High_Prob_Event}.

\begin{lemma}\label{lem:Estimate_MGF}
Suppose $ d=o(\log n) $. For some $ \sigma_0>0 $, let $ \delta = \sigma_0/\sqrt{n} $ and $ N \subset O(d) $ be the $ \delta $-net defined in \prettyref{eq:net}.
\begin{enumerate}
\item[(i)] If $ \sigma_0 = o(n^{-2/d}) $, then
\begin{equation}
\sum_{\Pi \neq I_n} \sum_{Q \in N} \expect \exp \left\{ -\frac{1}{32 \sigma_0^2} \Fnorm{X-\Pi X Q}^2 \right\} = o(1).
\end{equation}
\item[(ii)] For any $ \eps=\eps(n)>0 $, if $ \sigma_0^{-d} > 16 n 2^{2/\eps}  $, then the following is true
\begin{equation}
\sum_{\diff(\pi,\mathrm{Id}) \geq \eps n} \sum_{Q \in N} \expect \exp \left\{ -\frac{1}{32 \sigma_0^2} \Fnorm{X-\Pi X Q}^2 \right\} = o(1).
\end{equation}
\end{enumerate}
\end{lemma}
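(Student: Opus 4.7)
The plan is to bound the double sum $\sum_{\Pi\neq I_n}\sum_{Q\in N}p(\Pi,Q)$ via a two-level decomposition---along the cycle type of $\Pi$ and the spectral class $\lambda\in\mathbf{\Lambda}$ of $Q$---and then balance the combinatorial entropy against the decay of $a_1(Q)$.

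First, using the product formula $p(\Pi,Q)=a_1(Q)^{n_1}\prod_{k\ge 2}a_k(Q)^{n_k(\Pi)}$ from \prettyref{lem:MGF}, I would group permutations by their number $m=n-n_1$ of non-fixed points. Applying the bound $a_k\le(4\sigma_0)^{(k-1)d}$ from \prettyref{eq:ak_Estimate} and the cycle-index identity for the set of derangements of $[m]$, namely $\sum_{\pi\text{ derangement of }[m]}\prod_{k\ge 2}a_k^{n_k(\pi)}=m!\,[z^m]\exp\bigl(\sum_{k\ge 2}a_k z^k/k\bigr)$, a saddle-point estimate at $z\sim\sqrt{m/\eta}$ with $\eta=(4\sigma_0)^d$ gives $\sqrt m\,(m\eta/e)^{m/2}$ for the inner sum. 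Combining with $\binom nm\le(ne/m)^m$ yields the key estimate
\begin{equation*}
\sum_{\Pi:\,n_1=n-m}p(\Pi,Q)\;\le\;\sqrt m\,\bigl(n^2\eta e/m\bigr)^{m/2}\,a_1(Q)^{n-m}.
\end{equation*}
The prefactor $(n^2\eta/m)^{m/2}$ reveals why $\sigma_0=o(n^{-2/d})$ is the natural threshold for part (i): it makes $n^2\eta=o(1)$, so the $m$-series is dominated by $m=2$ with contribution $O(n^2\eta)=o(1)$. For part (ii), when $m\ge\eps n$ the same factor becomes $(n\eta e/\eps)^{\eps n/2}$, and the hypothesis $\sigma_0^{-d}>16n\cdot 2^{2/\eps}$ is tuned precisely so that $n\eta<2^{-2/\eps}$, forcing exponential decay in $n$.

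Next, I would evaluate $\sum_{Q\in N}a_1(Q)^{n-m}$ by decomposing $N=\bigcup_{\lambda\in\mathbf{\Lambda}}N(\lambda)$ and using that $a_1$ depends only on $\lambda$. Combining \prettyref{lem:Net_Size} and \prettyref{eq:a1_Estimate} reduces this to a $d$-dimensional lattice sum (spacing $\delta/4$) over phases $(\theta_1,\dots,\theta_d)\in\Theta^d$ of
$\bigl(1+2\max_\ell|\theta_\ell|/\delta\bigr)^{2d^2}\prod_{\ell=1}^d\min\{1,\,C\sigma_0/(\sigma_0+|\theta_\ell|)\}^{n-m}$,
where the truncation at $1$ uses the trivial bound $a_1\le 1$ near $\lambda=I_d$ while the polynomial-decay bound handles $\lambda$ far from $I_d$. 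The sum is controlled by a dyadic shell decomposition in $r=\max_\ell|\theta_\ell|$: in each annulus $r\in[2^j\delta,2^{j+1}\delta]$, first condition on which coordinate $\ell^\ast$ achieves the maximum (costing a factor $d$), then bound the $\theta_{\ell^\ast}$-sum absorbing the full entropy cost $(r/\delta)^{2d^2}$, and bound the remaining coordinates by a weaker per-coordinate estimate. The choice $\delta=\sigma_0/\sqrt n$ is tuned so that the growing entropy $(r/\delta)^{O(d^2)}$ and the decay $(\sigma_0/r)^{O(n-m)}$ are commensurate across shells, ensuring no single scale dominates.

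Combining both estimates for part (i) gives a bound of order $n^2\eta$ times a factor sub-polynomial in $n$ for $d=o(\log n)$, which is $o(1)$ under $\sigma_0=o(n^{-2/d})$; for part (ii) the exponential-in-$n$ decay from the cycle-type estimate dominates all entropy factors under the stated hypothesis. The main obstacle is the non-separability of the local entropy bound $(1+2\max_\ell|\theta_\ell|/\delta)^{2d^2}$: the maximum operator couples all coordinates simultaneously and prevents any clean coordinate-wise factorization of the $Q$-sum. The shell-plus-conditioning argument above is needed to exploit the observation from the proof sketch of \prettyref{sec:positve-sketch}---that both the local entropy of $O(d)$ near $I_d$ and the decay of $a_1$ on that same neighborhood are driven by $\max_\ell|\theta_\ell|$---and tracking these joint contributions scale-by-scale, while verifying that the resulting dimension-dependent factors remain sub-polynomial in $n$, is precisely where the condition $d=o(\log n)$ becomes essential.
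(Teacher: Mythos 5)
Your treatment of the permutation sum is fine: the exponential-formula/saddle-point bound $\sum_{\text{derangements of }[m]}\prod_{k\ge 2}a_k^{n_k}\lesssim\sqrt m\,(m\eta/e)^{m/2}$ is essentially equivalent to the paper's derangement-truncated cycle-count MGF estimate \prettyref{eq:MGF_Cycle}, and the way the hypotheses of (i) and (ii) enter through $(n^2\eta/m)^{m/2}$ is the right mechanism. But the $Q$-sum is where your proposal breaks. The "main obstacle" you identify (non-separability of the entropy bound $(1+2\max_\ell|\theta_\ell|/\delta)^{2d^2}$) is not an obstacle at all: since every factor is at least one, the paper simply bounds it by $\prod_\ell(1+|m_\ell|/2)^{2d^2}$ (\prettyref{eq:Net_Size_Product}), after which the sum over $N$ factorizes exactly into a $d$-th power of a one-dimensional sum $T_{n_1}$; no shell decomposition or conditioning on the maximizing coordinate is needed.

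The genuine gap is your truncation $a_1(Q)^{n-m}\le\prod_\ell\min\{1,\,C\sigma_0/(\sigma_0+|\theta_\ell|)\}^{n-m}$, which throws away all decay in the near-identity region $|\theta_\ell|\lesssim\sigma_0$. That region contains about $\sqrt n$ lattice points per coordinate (since $\delta=\sigma_0/\sqrt n$) and carries local entropy as large as $(1+2r/\delta)^{2d^2}\approx n^{\Theta(d^2)}$ at $r\approx\sigma_0$, with nothing in your bound to offset it. For the dominant small-$m$ terms (e.g.\ $m=2$, prefactor $\approx n^2\eta$), your estimates therefore give at best $n^2(4\sigma_0)^d\cdot n^{\Theta(d^2)}$, which is \emph{not} $o(1)$ under the sole assumption $\sigma_0=o(n^{-2/d})$ (take $d$ fixed and $\sigma_0=n^{-2/d}/\log n$). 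Your concluding claim that the $Q$-factor is "sub-polynomial in $n$" is thus unjustified --- and even if it were sub-polynomial it would not suffice, since $n^2\eta$ may tend to zero slower than any power of $n$; one really needs the $Q$-sum for large $n_1$ to be $O(C^d)$. This is exactly what the paper achieves by \emph{keeping} the decay $(1+|\theta_\ell|/\sigma_0)^{-n_1}$ (equivalently $(1+|m_\ell|/(4\sqrt n))^{-n_1}$), which for $n_1\gtrsim\sqrt n\,(\log n)^2$ already decays like $e^{-\Omega((\log n)^2)}$ per lattice step and crushes the local entropy well below the scale $\sigma_0$; the regime split over $n_1$ (small, intermediate, large) in the paper's bounds \prettyref{eq:Tn1_small}--\prettyref{eq:Tn1_large2} exists precisely to exploit this. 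To repair your argument you would have to replace the truncation at $1$ by the true per-coordinate factor $4\sigma_0/\sqrt{4\sin^2(\theta_\ell/2)+16\sigma_0^2}$ (which decays like $e^{-c\,n_1\theta_\ell^2/\sigma_0^2}$ for $|\theta_\ell|\ll\sigma_0$) and run the entropy-versus-decay comparison in the region $\delta\le r\le\sigma_0$ as well, at which point you have essentially reconstructed the paper's $T_{n_1}$ analysis. (A minor additional slip: with $\eta=(4\sigma_0)^d$, the hypothesis $\sigma_0^{-d}>16n\,2^{2/\eps}$ gives $n\eta<4^d 2^{-2/\eps}/16$, not $n\eta<2^{-2/\eps}$, so your part (ii) bookkeeping of constants needs the same care with $C^{dk}$-type factors that the paper absorbs.)
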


\begin{proof}
(i) For any fixed $ \Pi \in \fS_n $,
combining \prettyref{eq:ak_Estimate} and \prettyref{eq:a1_Estimate} yields
\begin{align}
\prod_{k \geq 1}a_k(Q)^{n_k} &\leq (C \sigma_0)^{n_1 d + \sum_{k \geq 2}n_k(k-1)d} \pth{\prod_{\ell =1}^d \frac{1}{\theta_\ell +\sigma_0}}^{n_1} \nonumber \\
&= (C\sigma_0)^{d(n-\sum_{k \geq 2}n_k)} \pth{\prod_{\ell =1}^d \frac{1}{\theta_\ell +\sigma_0}}^{n_1} \nonumber \\
&\leq (C\sigma_0)^{\frac{n+n_1}{2}d} \pth{\prod_{\ell =1}^d \frac{1}{\theta_\ell +\sigma_0}}^{n_1}. \label{eq:ak_product}
\end{align}
Note that by Lemma \ref{lem:Net_Size}, we have
\begin{equation}
\left| N\pth{ e^{\ii \frac{m_1 \delta}{4}},\dots,e^{\ii \frac{m_d \delta}{4}} } \right| \leq \pth{1+\frac{\max |m_\ell|}{2}}^{2d^2} \leq \pth{1+\frac{\sum_{\ell=1}^d |m_\ell|}{2}}^{2d^2} \leq \prod_{\ell=1}^d \pth{1+\frac{|m_\ell|}{2}}^{2d^2}. \label{eq:Net_Size_Product}
\end{equation}
Using Lemma \ref{lem:MGF} and \prettyref{eq:ak_product}, this leads to
\begin{align*}
&~ \sum_{\Pi \neq I_n} \sum_{Q \in N} p(\Pi,Q)\\
&\leq \sum_{n_1=0}^{n-2} \sum_{m_1,\dots,m_d= \lfloor -\frac{4 \pi}{\delta} \rfloor }^{\lceil \frac{4 \pi}{\delta} \rceil} \left| N\pth{ e^{\ii \frac{m_1 \delta}{4}},\dots,e^{\ii \frac{m_d \delta}{4}} } \right| (n-n_1)! \binom{n}{n_1} (C\sigma_0)^{\frac{n+n_1}{2}d} \pth{\prod_{\ell =1}^d \frac{1}{\frac{\delta |m_\ell|}{4} +\sigma_0}}^{n_1}\\
&\leq \sum_{n_1=0}^{n-2} (C\sigma_0)^{\frac{n+n_1}{2}d} (n-n_1)! \binom{n}{n_1} \qth{ \sum_{m=\lfloor -\frac{4 \pi}{\delta} \rfloor}^{\lceil \frac{4 \pi}{\delta} \rceil} \frac{1}{(\frac{\delta |m|}{4} + \sigma_0)^{n_1}} (1+\tfrac{|m|}{2})^{2d^2} }^d\\
&\leq \sum_{n_1=0}^{n-2} (C\sigma_0)^{\frac{n-n_1}{2}d} (n-n_1)! \binom{n}{n_1} \qth{ \sum_{m=\lfloor -\frac{4 \pi}{\delta} \rfloor}^{\lceil \frac{4 \pi}{\delta} \rceil} \frac{1}{(1+\frac{\delta}{4\sigma_0}|m|)^{n_1}} (1+\tfrac{|m|}{2})^{2d^2} }^d\\
&\leq \sum_{n_1=0}^{n-2} \pth{(C\sigma_0)^d n^2}^{\frac{n-n_1}{2}} \qth{ \sum_{m=\lfloor -\frac{4 \pi}{\delta} \rfloor}^{\lceil \frac{4 \pi}{\delta} \rceil} \frac{1}{(1+\frac{\delta}{4\sigma_0}|m|)^{n_1}} (1+\tfrac{|m|}{2})^{2d^2} }^d,
\end{align*}
where the second line follows from Lemma \ref{lem:Net_Size} and the fourth line follows from the fact that the number of permutations with $ n_1 $ fixed points is at most $ (n-n_1)! \binom{n}{n_1} \leq n^{n-n_1} $.

Recall that $ \delta=\sigma_0/\sqrt{n} $ and $ \sigma_0 = o(n^{-2/d}) $. For any fixed $ 1 \leq n_1 \leq n-2 $, 
$$ T_{n_1} \triangleq \sum_{m=\lfloor -\frac{4 \pi}{\delta} \rfloor}^{\lceil \frac{4 \pi}{\delta} \rceil} \frac{1}{(1+\frac{\delta}{4\sigma_0}|m|)^{n_1}} (1+\tfrac{|m|}{2})^{2d^2} = \sum_{m=\lfloor -\frac{4 \pi}{\delta} \rfloor}^{\lceil \frac{4 \pi}{\delta} \rceil} \frac{1}{(1+\frac{|m|}{4\sqrt{n}})^{n_1}} (1+\tfrac{|m|}{2})^{2d^2}. $$
If $ n_1 \leq \sqrt{n} $, we have
\begin{equation}
T_{n_1}  \leq \sum_{m=\lfloor -\frac{4 \pi}{\delta} \rfloor}^{\lceil \frac{4 \pi}{\delta} \rceil} \pth{1+\frac{|m|}{2}}^{2d^2} \leq \frac{8\pi}{\delta} \pth{ 1+\frac{2\pi}{\delta} }^{2d^2} \leq 2 \pth{\frac{4 \pi \sqrt{n} }{\sigma_0}}^{2d^2+1}. \label{eq:Tn1_small}
\end{equation}
Therefore, let $ \sigma_0^{-d}=L $ and $ L=n^2 K $ where $ K \gg 1 $, then
\begin{align}
\sum_{n_1=0}^{\sqrt{n}} \pth{(C\sigma_0)^d n^2}^{\frac{n-n_1}{2}} T_{n_1}^d &\leq \sqrt{n} \qth{ 2 \pth{\frac{4 \pi \sqrt{n} }{\sigma_0}}^{2d^2+1} }^d \pth{(C\sigma_0)^d n^2}^{\frac{n-\sqrt{n}}{2}} \nonumber \\
&\leq C^{d^3} n^{2d^3} L^{2d^2+1} K^{-\frac{n-\sqrt{n}}{2}} \nonumber \\
&\leq C^{d^3} n^{2d^3} L^{3d^2} K^{-\frac{n}{3}} \nonumber \\
&\leq C^{d^3} n^{2d^3} \exp \pth{3d^2 \log (n^2 K)} \exp \pth{-\frac{n}{3} \log K} \nonumber \\
&\leq C^{d^3} \exp \pth{ (6d^2+2d^3) \log n - \pth{\frac{n}{3} - 3d^2} \log K } \nonumber \\
&=o(1), \label{eq:n1_small}
\end{align}
where the last line follows from $ K \gg 1 $ and $ d=o(\log n) $.

On the other hand, for $ \sqrt{n} \leq n_1 \leq n-2 $,we decompose it into two parts $ T_{n_1} = J_1+J_2 $, where
\begin{align*}
J_1 &\triangleq \sum_{|m| \leq 8\sqrt{n}} \frac{1}{(1+\frac{|m|}{4\sqrt{n}})^{n_1}} (1+\tfrac{|m|}{2})^{2d^2},\\
J_2 &\triangleq \sum_{8\sqrt{n} < |m| \leq \frac{4\pi}{\delta}} \frac{1}{(1+\frac{|m|}{4\sqrt{n}})^{n_1}} (1+\tfrac{|m|}{2})^{2d^2}.
\end{align*}
We first show that the contribution of $ J_2 $ is negligible. To see this, note that
\begin{align*}
J_2 &\leq C (4\sqrt{n})^{n_1} \sum_{m=1+8\sqrt{n}}^{4\pi / \delta} m^{-n_1 + 2d^2}\\
&\leq C (4\sqrt{n})^{n_1} \int_{8\sqrt{n}}^{4\pi / \delta} x^{-(n_1-2d^2)} \diff x\\
&\leq C (4\sqrt{n})^{n_1} \frac{1}{n_1 - 2d^2-1} (8 \sqrt{n})^{-n_1 +2d^2+1}\\
&\leq C 2^{-n_1 + 6d^2 + 3} \frac{1}{n_1 - 2d^2-1} n^{d^2+\frac{1}{2}}\\
&\leq C 2^{-n_1/2} n^{d^2}.
\end{align*}
Recall that $ n_1 \geq \sqrt{n} $ and $ d=o(\log n) $. Therefore we have $ J_2 = o(1) $. Moreover, a simple observation is that $ T_{n_1} \geq 1 $. This concludes that $ J_2 $ is negligible and it suffices to bound $ J_1 $. Note that for $ 0 \leq x \leq 2 $ we have $ 1+x \geq e^{x/2} $. Therefore, this implies
$$ J_1 \leq C \sum_{m=0}^{8\sqrt{n}} \exp \pth{- \pth{ \frac{n_1}{8 \sqrt{n}} -2d^2 } m}. $$
For $ n_1 \geq 32 \sqrt{n} (\log n)^2  $, we have $ \tfrac{n_1}{8 \sqrt{n}} -2d^2 > \tfrac{n_1}{16 \sqrt{n}} $ since $ d=o(\log n) $. Consequently, in this regime we have
\begin{equation*}
J_1 \leq C \sum_{m=0}^{8\sqrt{n}} \exp \pth{ - \frac{n_1}{16 \sqrt{n}} m } \leq \frac{C}{1-e^{ -\frac{n_1}{16\sqrt{n}} }} \leq \frac{C}{1-e^{-4(\log n)^2}}.
\end{equation*}
Thus, for $ n_1 \geq 32 \sqrt{n}(\log n)^2 $, we have
\begin{equation}
T_{n_1}^d \leq (2J_1)^d \leq C^d \pth{ 1-e^{-4(\log n)^2} }^{-d} \leq C^d \exp \pth{ d e^{-4 (\log n)^2} } \leq C^d. \label{eq:Tn1_large1}
\end{equation}
For $ \sqrt{n} \leq n_1 < 32\sqrt{n}(\log n)^2 $, we use a trivial bound
$$ J_1 \leq C \sum_{m=0}^{8 \sqrt{n}} \pth{1+\frac{m}{2}}^{2d^2} \leq C (8 \sqrt{n})^{2d^2+1}. $$
In this case,
\begin{equation}
T_{n_1}^d \leq C^d (8 \sqrt{n})^{2d^2+1}. \label{eq:Tn1_large2}
\end{equation}
Thus, \prettyref{eq:Tn1_large1} and \prettyref{eq:Tn1_large2} together imply
\begin{align}
&~ \sum_{n_1=\sqrt{n}}^{n-2} \pth{(C\sigma_0)^d n^2}^{\frac{n-n_1}{2}} T_{n_1}^d \nonumber \\
&\leq \sum_{n_1=\sqrt{n}}^{32\sqrt{n}(\log n)^2} \pth{(C\sigma_0)^d n^2}^{\frac{n-n_1}{2}} T_{n_1}^d + \sum_{n_1=32\sqrt{n}(\log n)^2}^{n-2} \pth{(C\sigma_0)^d n^2}^{\frac{n-n_1}{2}} T_{n_1}^d \nonumber \\
& \leq 32 \sqrt{n}(\log n)^2 C^{d^2} (8\sqrt{n})^{2d^3 + d} 2^{-n}+ C^d \sigma_0^d n^2 \nonumber \\
&=o(1) \label{eq:n1_large}
\end{align}
Combining \prettyref{eq:n1_small} and \prettyref{eq:n1_large} together, we obtain
$$ \sum_{\Pi \neq I_n} \sum_{Q \in N} p(\Pi,Q) =o(1), $$
which completes the proof.

\smallskip

(ii) Due to the stronger noise level, we need to be more careful in \prettyref{eq:ak_product}:
\begin{align}
\prod_{j \geq 1}a_k(Q)^{n_j} &\leq (C \sigma_0)^{n_1 d + \sum_{j \geq 2}n_j(j-1)d} \pth{\prod_{\ell =1}^d \frac{1}{|\theta_\ell| +\sigma_0}}^{n_1} \nonumber \\
&= (C \sigma_0)^{dn-d\sum_{j=1}^n n_j} \prod_{\ell=1}^d \frac{1}{(1+\frac{|\theta_\ell|}{\sigma_0})^{n_1}}. \label{eq:ak_product_new}
\end{align}
For simplicity, denote by $ k \triangleq \diff(\pi,\mathrm{Id})=n-n_1 $ the number of non-fixed points of $\pi$. Let $ \tilde{\pi} $ be the restriction of the permutation $ \pi \in S_n $ on its non-fixed points, which by definition is a derangement. Denote the number of cycles of a permutation $ \pi $ by $ \frakc(\pi) $. An observation is that $ \frakc(\pi)=\sum_{j=1}^n n_j = n_1 + \frakc(\tilde{\pi}) $. Then Lemma \ref{lem:MGF} and \prettyref{eq:ak_product_new} yield
\begin{multline*}
\sum_{\diff(\pi,\mathrm{Id}) \geq \eps n} \sum_{Q \in N} p(\Pi,Q)\\
\leq \sum_{k=\eps n}^{n} \binom{n}{k} \sum_{\tilde{\pi}\ {\rm derangement}} \sum_{m_1,\dots,m_d= \lfloor -\frac{4 \pi}{\delta} \rfloor }^{\lceil \frac{4 \pi}{\delta} \rceil} \left| N\pth{ e^{\ii \frac{m_1 \delta}{4}},\dots,e^{\ii \frac{m_d \delta}{4}} } \right| (C \sigma_0)^{d(k-\frakc(\tilde{\pi}))} \prod_{\ell=1}^d \frac{1}{(1+\frac{\delta |m_\ell|}{4 \sigma_0})^{n-k}}.
\end{multline*}
Denote $ L=\sigma_0^{-d} $. Using \prettyref{eq:Net_Size_Product} and rearranging the above inequality give us
\begin{multline}
\sum_{\diff(\pi,\mathrm{Id}) \geq \eps n} \sum_{Q \in N} p(\Pi,Q)\\ 
\leq \sum_{k=\eps n}^n \binom{n}{k} L^{-k} \sum_{\tilde{\pi}\ {\rm derangement}} L^{\frakc(\tilde{\pi})} \qth{ \sum_{m= \lfloor -\frac{4 \pi}{\delta} \rfloor }^{\lceil \frac{4 \pi}{\delta} \rceil} \frac{1}{(1+\frac{\delta}{4\sigma_0}|m|)^{n-k}} (1+\tfrac{|m|}{2})^{2d^2} }^d. \label{eq:MGF_Estimate_New}
\end{multline}
Note that
$$ \sum_{\tilde{\pi}\ {\rm derangement}} L^{\frakc(\tilde{\pi})} = k! \, \expect_{\tau} \qth{ L^{\frakc(\tau)} \mathbbm{1}_{\{\tau \ \mathrm{is \ a \ derangement}\}} }, $$
where the expectation $ \expect_{\tau} $ is taken for a uniformly random permutation $ \tau \in S_k $. To bound the above truncated generating function, 
recall that the generating function of $\frakc(\tau)$ is given by (see, e.g., \cite[Eq.~(39)]{flajolet2009analytic})
\begin{equation}
\Expect_\tau[L^{\frakc(\tau)}] =  \binom{L+k-1}{k} = \frac{L(L+1)\cdots (L+k-1)}{k!}.
\label{eq:MGF-randomcycle}
\end{equation}
Pick some  $ \alpha \in (0,1) $ to be determined later and obtain the following
\begin{multline*}
\expect_{\tau} \qth{ L^{\frakc(\tau)} \mathbbm{1}_{\{\tau \ \mathrm{is \ a \ derangement}\}} }  \leq \expect_{\tau} \qth{ L^{\frakc(\tau)} \mathbbm{1}_{\{\frakc(\tau) \leq k/2 \}} }\\
\leq \expect_{\tau} \qth{L^{\alpha \frakc(\tau) + (1-\alpha)\frac{k}{2}}} =L^{(1-\alpha)\frac{k}{2}} \expect_{\tau} \qth{L^{\alpha \frakc(\tau)}} =L^{(1-\alpha)\frac{k}{2}} \binom{L^\alpha+k-1}{k}.
\end{multline*}
 Choosing $ \alpha=\tfrac{\log k}{\log L} $, we have
\begin{equation}
\expect_{\tau} \qth{ L^{\frakc(\tau)} \mathbbm{1}_{\{\tau \ \mathrm{is \ a \ derangement}\}} } \leq \binom{2k-1}{k} \pth{\frac{L}{k}}^{k/2} \leq \pth{\frac{16L}{k}}^{k/2}. \label{eq:MGF_Cycle}
\end{equation}

Recall that
$$ T_{n-k} = \sum_{m= \lfloor -\frac{4 \pi}{\delta} \rfloor }^{\lceil \frac{4 \pi}{\delta} \rceil} \frac{1}{(1+\frac{\delta}{4\sigma_0}|m|)^{n-k}} (1+\tfrac{|m|}{2})^{2d^2}. $$
For $ k \leq n-\sqrt{n} $, each term $ T_{n-k} $ is bounded by \prettyref{eq:Tn1_large1} and \prettyref{eq:Tn1_large2}. On the other hand, if $ k \geq n-\sqrt{n} $, we control $ T_{n-k} $ via \prettyref{eq:Tn1_small}.
Here in the case of almost perfect recovery, combined with \prettyref{eq:MGF_Cycle}, the assumption on $ \sigma_0 $ yields a superexponentially decaying term in the summation \prettyref{eq:MGF_Estimate_New}.
Specifically, combined this with \prettyref{eq:MGF_Estimate_New} and \prettyref{eq:MGF_Cycle}, we obtain
$$ \sum_{\diff(\pi,\mathrm{Id}) \geq \eps n} \sum_{Q \in N} p(\Pi,Q) \leq J_1 + J_2, $$
where
\begin{align*}
J_1 & \triangleq C^d \sum_{k=\eps n}^{n-32 \sqrt{n} (\log n)^2} \binom{n}{k}L^{-k} k! \pth{\frac{16 L}{k}}^{k/2}, \\
J_2 & \triangleq C^{d^3} n^{2d^3} L^{2d^2+1} \sum_{k=n-32 \sqrt{n}(\log n)^2 +1 }^n \binom{n}{k}L^{-k} k! \pth{\frac{16 L}{k}}^{k/2}.
\end{align*}
Let $ L=nK $ where $ \tfrac{\eps}{2}\log \tfrac{K}{16} > \log 2  $. Recall that $ d=o(\log n) $. Then applying Stirling's approximation gives us
\begin{equation}
J_1 \leq C^d n 2^n \pth{\frac{16 n}{L}}^{\eps n/2} \leq C^d n \exp \pth{ n \log 2 - \frac{\eps n}{2} \log \frac{K}{16} } = o(1),
\label{eq:Almost_k_small}
\end{equation}
and
\begin{align}
J_2 &\leq C^{d^3} n^{2d^3+1} L^{2d^2+1} 2^n \pth{\frac{16 n}{L}}^{n/3} \nonumber \\
&\leq C^{d^3} n^{2d^3+1} \exp \qth{ (2d^2+1) \log n + (2d^2+1) \log K + n \log 2 - \frac{n}{3} \log \frac{K}{16} } = o(1).
\label{eq:Almost_k_large}
\end{align}
Combining \prettyref{eq:Almost_k_small} and \prettyref{eq:Almost_k_large} implies
$$ \sum_{\diff(\pi,\mathrm{Id}) \geq \eps n} \sum_{Q \in N} p(\Pi,Q) = o(1), $$
which completes the proof.
\end{proof}

The estimate of the moment generating functions results in the following lemma, which plays a crucial rule in the probability reduction estimate \prettyref{eq:Prob_Reduction}.

\begin{lemma}\label{lem:High_Prob_Event}
For some $ \sigma_0>0 $, let $ \delta = \sigma_0/\sqrt{n} $ and $ N $ be the $ \delta $-net defined in \prettyref{eq:net}.
\begin{enumerate}
\item[(i)] If $ \sigma_0 = o(n^{-2/d}) $, for any constant $ c>0 $, the following inequality is true with high probability
\begin{equation}
\min_{\Pi \neq I_n} \min_{Q \in N} \Fnorm{X-\Pi X Q} \geq c \sqrt{d} \sigma_0.
\end{equation}
\item[(ii)] For any $ \eps=\eps(n)>0 $, if $ \sigma_0^{-d} > 16 n 2^{2/\eps}  $, the following is true for any fixed constant $ c>0 $ 
with high probability
\begin{equation}
\min_{\diff(\pi,\mathrm{Id}) \geq \eps n} \min_{Q \in N} \Fnorm{X-\Pi X Q} \geq c \sqrt{d} \sigma_0.
\end{equation}
\end{enumerate}
\end{lemma}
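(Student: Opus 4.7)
}
My strategy is to convert the moment generating function bounds from Lemma \ref{lem:Estimate_MGF} into a high-probability lower bound on $\Fnorm{X-\Pi X Q}$ via an exponential Markov inequality, followed by a union bound over the same index set already handled in Lemma \ref{lem:Estimate_MGF}. Concretely, for any fixed $\Pi\in\fS_n$ and $Q\in O(d)$, setting $Y=\Fnorm{X-\Pi X Q}^2$ and $\lambda=1/(32\sigma_0^2)$,
\[
\prob{\Fnorm{X-\Pi X Q}\leq c\sqrt{d}\,\sigma_0}=\prob{e^{-\lambda Y}\geq e^{-c^2 d/32}}\leq e^{c^2 d/32}\,p(\Pi,Q),
\]
where $p(\Pi,Q)$ is exactly the MGF defined in \prettyref{eq:MGF}. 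This is just the standard Chernoff-style inequality applied in the downward direction, and it packages the entire problem back into an MGF estimate.

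Next I take a union bound. For part (i) I sum over all $\Pi\neq I_n$ and all $Q\in N$; for part (ii) I sum over all $\Pi$ with $\diff(\pi,\mathrm{Id})\geq\eps n$ and all $Q\in N$. This yields
\[
\prob{\min_{\Pi,\,Q\in N}\Fnorm{X-\Pi X Q}\leq c\sqrt{d}\,\sigma_0}\leq e^{c^2 d/32}\sum_{\Pi}\sum_{Q\in N}p(\Pi,Q),
\]
where the outer sums range over the appropriate index sets in (i) or (ii). By Lemma \ref{lem:Estimate_MGF}, the double sum on the right is $o(1)$ under the respective hypotheses $\sigma_0=o(n^{-2/d})$ or $\sigma_0^{-d}>16n\,2^{2/\eps}$.

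The only remaining point is to check that the Markov prefactor $e^{c^2 d/32}$ does not destroy the decay established in Lemma \ref{lem:Estimate_MGF}. Since $c$ is a fixed constant and $d=o(\log n)$, we have $e^{c^2 d/32}=n^{o(1)}$. Inspection of the proof of Lemma \ref{lem:Estimate_MGF} shows that the double sum decays much faster than $n^{-o(1)}$: in case (i), the dominant estimate in \prettyref{eq:n1_small} gives a bound of the form $\exp\{O(d^3)\log n - \Omega(n)\log K\}$ with $K=(n^2\sigma_0^d)^{-1}\to\infty$, and in case (ii) the bounds \prettyref{eq:Almost_k_small}--\prettyref{eq:Almost_k_large} are exponentially small in $n$ once $K/16>2^{2/\eps}$. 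Multiplying these superpolynomially small quantities by $n^{o(1)}$ still yields $o(1)$, completing the proof. There is no real obstacle here; the whole lemma is essentially a clean Markov-inequality wrapper around the MGF estimates, with the bookkeeping on $d$ and $\sigma_0$ matching exactly the regimes already controlled in Lemma \ref{lem:Estimate_MGF}.
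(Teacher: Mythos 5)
Your proposal is correct and is essentially identical to the paper's proof: a Chernoff/exponential-Markov bound with $t=1/(32\sigma_0^2)$ for each fixed $(\Pi,Q)$, a union bound over the relevant permutations and the net $N$, and then Lemma \ref{lem:Estimate_MGF} to make the resulting MGF sum $o(1)$. Your extra check that the prefactor $e^{c^2 d/32}=n^{o(1)}$ (since $d=o(\log n)$) cannot spoil the superpolynomial decay is exactly the point the paper leaves implicit.
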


\begin{proof}
(i) For fixed $ \Pi \neq I_n $ and $ Q \in N $, by the Chernoff bound, for every $ t \geq 0 $ we have
\begin{multline*}
\prob{ \Fnorm{X-\Pi X Q} < c \sqrt{d} \sigma_0 }\\
=\prob{ e^{-t \Fnorm{X-\Pi X Q}^2} > e^{-t c^2 d \sigma_0^2} } \leq e^{t c^2 d \sigma_0^2} \expect \exp \pth{ -t \Fnorm{X-\Pi X Q}^2 }.
\end{multline*}
Taking $ t=\frac{1}{32 \sigma_0^2} $, by the union bound we have
\begin{align*}
\prob{ \min_{\Pi \neq I_n} \min_{Q \in N} \Fnorm{X-\Pi X Q} \geq c \sqrt{d} \sigma_0 } &= 1-\prob{\exists \Pi \neq I_n, \exists Q \in N \ s.t.\  \Fnorm{X-\Pi X Q} \leq c\sigma_0}\\
&\geq 1- e^{\frac{c^2 d}{32}} \sum_{\Pi \neq I_d} \sum_{Q \in N} \expect \exp \left\{ -\frac{1}{32 \sigma_0^2} \Fnorm{X-\Pi X Q}^2 \right\}\\
&\geq 1-o(1),
\end{align*}
where the last step follows from Lemma \ref{lem:Estimate_MGF}.

\smallskip

(ii) The arguments are similar with Part (i). Using Chernoff bound and Lemma \ref{lem:Estimate_MGF}, we have
\begin{align*}
&~ \prob{ \min_{\diff(\pi,\mathrm{Id}) \geq \eps n} \min_{Q \in N} \Fnorm{X-\Pi X Q} \geq c \sqrt{d} \sigma_0 }\\
\geq &~ 1- e^{\frac{c^2 d}{32}} \sum_{\diff(\pi,\mathrm{Id}) \geq \eps n} \sum_{Q \in N} \expect \exp \left\{ -\frac{1}{32 \sigma_0^2} \Fnorm{X-\Pi X Q}^2 \right\}\\
\geq &~ 1-o(1),
\end{align*}
which completes the proof.
\end{proof}


\subsection{Proof of Theorem \ref{thm:main}}
\begin{proof}
(i) For $ \sigma \ll n^{-2/d} $, let $ \delta=\sigma/\sqrt{n} $ and let $ N $ be the $ \delta $-net in operator norm for $ O(d) $ defined in \prettyref{eq:net}. Applying Lemma \ref{lem:Net_Error}, we have
\begin{align*}
\prob{ \|X^\top \Pi^\top Y\|_* \geq \|X^\top Y\|_* } &\leq \prob{ \max_{Q \in O(d)} \Iprod{X^\top \Pi^\top Y}{Q} \geq \Iprod{X^\top Y}{I_d} }\\
& \leq \prob{ \max_{Q \in N} \Iprod{X^\top \Pi^\top Y}{Q} \geq (1-\delta^2)\Iprod{X^\top Y}{I_d} }.
\end{align*}

For fixed $ \Pi $ and $ Q $, we have
\begin{multline*}
\prob{ \Iprod{X^\top \Pi^\top Y}{Q} \geq (1-\delta^2)\Iprod{X^\top Y}{I_d} }\\
=\prob{ \sigma \Iprod{Z}{(1-\delta^2)X-\Pi X Q} \geq (1-\delta^2)\fnorm{X}^2 - \Iprod{X}{\Pi X Q} }.
\end{multline*}
Note that we have the following observations
$$ \Fnorm{X}^2 - \iprod{X}{\Pi X Q} = \frac{1}{2} \Fnorm{X-\Pi X Q}^2, $$
and
\begin{align*}
\Fnorm{(1-\delta^2)X - \Pi X Q}^2 &= (1-\delta^2)^2 \Fnorm{X}^2 + \Fnorm{X}^2 - 2(1-\delta^2) \iprod{X}{\Pi X Q}\\
&= (1-\delta^2) \Fnorm{X-\Pi X Q}^2 - \delta^4 \Fnorm{X}^2.
\end{align*}
Therefore,
\begin{align}
& \prob{ \Iprod{X^\top \Pi^\top Y}{Q} \geq (1-\delta^2)\Iprod{X^\top Y}{I_d} } \nonumber \\
= & \prob{ \sigma \calN\left( 0,(1-\delta^2)\Fnorm{X-\Pi X Q}^2 - \delta^4 \Fnorm{X}^2 \right) \geq \frac{1}{2} \Fnorm{X-\Pi X Q}^2 - \delta^2 \Fnorm{X}^2 } \nonumber \\
\leq & \prob{ \sigma \calN\left( 0,\Fnorm{X-\Pi X Q}^2 \right) \geq \frac{1}{2} \Fnorm{X-\Pi X Q}^2 - \delta^2 \Fnorm{X}^2 }. \label{eq:Prob_Before_Reduction}
\end{align}

Consider the following events
$$ \calE_1 \triangleq \left\{ cdn \leq \Fnorm{X}^2 \leq Cdn \right\},\ \ \calE_2 \triangleq \sth{ \min_{\Pi \neq I} \min_{Q \in N} \Fnorm{X-\Pi X Q} \geq C \sqrt{d} \sigma }. $$
It is well known that $ \prob{\calE_1}=1-o(1) $, and by Lemma \ref{lem:High_Prob_Event} we also have $ \prob{\calE_2}=1-o(1) $.
On the events $ \calE_1 $ and $ \calE_2 $, the previous estimate \prettyref{eq:Prob_Before_Reduction} for $ \Pi \neq I $ reduces to
\begin{multline}
\prob{ \Iprod{X^\top \Pi^\top Y}{Q} \geq (1-\delta^2)\Iprod{X^\top Y}{I_d}, \calE_1,\calE_2  }\\  \leq \prob{ \sigma \calN\left( 0,\Fnorm{X-\Pi X Q}^2 \right) \geq \frac{1}{4} \Fnorm{X-\Pi X Q}^2 } \leq \expect \exp \left\{ -\frac{1}{32 \sigma^2} \Fnorm{X-\Pi X Q}^2 \right\}. \label{eq:Prob_Reduction}
\end{multline}
By Lemma \ref{lem:Estimate_MGF}, the reduction \prettyref{eq:Prob_Reduction} and a union bound, we have
\begin{align*}
&~\prob{ \max_{\Pi \neq I} \|X^\top \Pi^\top Y\|_* \geq \|X^\top Y\|_*  }\\
\leq &~ \prob{ \max_{\Pi \neq I} \|X^\top \Pi^\top Y\|_* \geq \|X^\top Y\|_* , \calE_1, \calE_2 } + \prob{ \calE_1^c } + \prob{ \calE_2^c } \\ 
\leq &~ \prob{ \max_{\Pi \neq I_n} \max_{Q \in N} \Iprod{X^\top \Pi^\top Y}{Q} \geq (1-\delta^2)\Iprod{X^\top Y}{I_d} ,\calE_1,\calE_2 } + o(1)\\
\leq &~ \sum_{\Pi \neq I_n} \sum_{Q \in N} \prob{  \Iprod{X^\top \Pi^\top Y}{Q} \geq (1-\delta^2)\Iprod{X^\top Y}{I_d} , \calE_1,\calE_2 } + o(1)\\
\leq &~ \sum_{\Pi \neq I_n}\sum_{Q \in N}  \expect \exp \left\{ -\frac{1}{32 \sigma^2} \Fnorm{X-\Pi X Q}^2 \right\} + o(1)\\
= &~ o(1).
\end{align*}
This implies that the ground truth $ \Pi^*=I_n $ is the approximate MLE with probability $ 1-o(1) $, i.e.,
$$ \prob{ \argmax_{\Pi \in S_n} \|X^\top \Pi^\top Y\|_* = I_n } = 1-o(1), $$
which shows the success of perfect recovery with high probability.

\smallskip

(ii) The arguments are essentially the same as Part (i). For a sufficiently small $ \eps=\eps(n)>0 $, take $ \sigma^{-d} > 16 n 2^{2/\eps} $ and consider the event
$$ \calE_2' \triangleq \sth{ \min_{\diff(\pi,\mathrm{Id}) \geq \eps n} \min_{Q \in N} \Fnorm{X-\Pi X Q} \geq C \sqrt{d} \sigma }. $$
Then Lemma \ref{lem:High_Prob_Event} implies $ \prob{\calE_2'}=1-o(1) $. On the event $ \calE_1 $ and $ \calE_2' $, the reduction estimate for $ \Pi $ with $ \diff(\pi,\mathrm{Id}) \geq \eps n $ still holds
$$ \prob{ \Iprod{X^\top \Pi^\top Y}{Q} \geq (1-\delta^2)\Iprod{X^\top Y}{I_d}, \calE_1,\calE_2'  } \leq \expect \exp \left\{ -\frac{1}{32 \sigma^2} \Fnorm{X-\Pi X Q}^2 \right\}. $$
Combining this with Lemma \ref{lem:Estimate_MGF}, we have
\begin{multline*}
\prob{ \max_{\diff(\pi,\mathrm{Id}) \geq \eps n} \|X^\top \Pi^\top Y\|_* \geq \|X^\top Y\|_*  }\\
\leq \sum_{\diff(\pi,\mathrm{Id}) \geq \eps n} \sum_{Q \in N} \expect \exp \left\{ -\frac{1}{32 \sigma^2} \Fnorm{X-\Pi X Q}^2 \right\} + o(1) = o(1).
\end{multline*}
Thus,
$$ \prob{ \overlap(\hat{\pi}_{\mathrm{AML}},\pi^*) \geq 1-\eps  } = 1-o(1). $$
Taking $ \sigma \ll n^{-1/d} $ so that $\epsilon=o(1)$, this implies the desired \prettyref{eq:main2}.
\end{proof}

\section{Proof for the distance model}
	\label{app:distance}



In this section, we prove Theorem \ref{thm:distance}. Let $ \tX \triangleq (I-\bfF)X $, $ \tY \triangleq (I-\bfF)Y $ and $ \tZ \triangleq (I-\bfF)Z $. Recall that the approximate MLE for the distance model is given by \prettyref{eq:MLEapprox-distance}.
As in the proof of \prettyref{thm:main},  thanks to the orthogonal invariance of the nuclear norm $ \|\cdot\|_* $, we may assume $ \tA^{1/2} = \tX $ and $ \tB^{1/2}=\tY $ without loss of generality, so that
\begin{equation*}
\tilde{\Pi}_{\mathrm{AML}} = \arg\max_{\Pi \in \fS(n)} \|\tX^\top \Pi^\top \tY\|_*.
\end{equation*}

Following the arguments for the dot-product model in Appendix \ref{sec:positve}, a key step is to
extend the estimate for $p(\Pi,Q)$ in \prettyref{eq:MGF}
to the following MGF:
\begin{equation}
\tilde{p}(\Pi,Q) \triangleq \expect \exp \sth{ -\frac{1}{32 \sigma^2} \Fnorm{\tX-\Pi \tX Q}^2 },
\label{eq:MGF_Distance}
\end{equation}
where $ \Pi \in \fS_n $ and $ Q \in O(d) $.
The following lemma gives a comparison between the MGF for the distance model and that for the dot-product model defined in \prettyref{eq:MGF}, the latter of which was previous estimated in \prettyref{lem:MGF}.

\begin{lemma}\label{lem:MGF_Compare}
Fix a permutation matrix $ \Pi \in \fS_n $. For $Q \in O(d)$, denote by $ e^{\ii \theta_1},\dots,e^{\ii \theta_d} $ the eigenvalues of $ Q $, where $ \theta_1,\dots,\theta_d \in [-\pi,\pi] $.
Then
\begin{equation}
\tilde{p}(\Pi,Q) \leq p(\Pi,Q) \prod_{\ell=1}^d \pth{1+\frac{\theta_l^2}{16\sigma^2}}^{1/2}. 
\label{eq:MGF_Compare}
\end{equation}
\end{lemma}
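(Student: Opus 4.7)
The plan is to reduce the comparison of MGFs to a Gaussian MGF calculation via the orthogonal decomposition of $X$ into its mean and centered parts. First, I would note the key commutation $\Pi(I - \bfF) = (I-\bfF)\Pi$, which follows from $\Pi\ones = \ones = \Pi^\top \ones$. This yields $\tX - \Pi\tX Q = (I-\bfF)(X - \Pi X Q)$, and since $\bfF$ and $I-\bfF$ are complementary orthogonal projections (acting on the columns of their argument), the Pythagorean identity gives
$$\Fnorm{X - \Pi X Q}^2 = \Fnorm{\tX - \Pi\tX Q}^2 + \Fnorm{\bfF(X - \Pi X Q)}^2.$$

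Next, I would compute the second term explicitly. Writing $\bfF X = \ones \bar{X}^\top$, where $\bar{X} = \frac{1}{n}X^\top\ones \in \reals^d$ is the sample mean, and using $\bfF\Pi = \bfF$, we obtain $\bfF(X - \Pi X Q) = \ones \bar{X}^\top(I_d - Q)$, whose squared Frobenius norm equals $n \bar{X}^\top(2I_d - Q - Q^\top)\bar{X}$ after using $QQ^\top = I_d$. Since the rows of $X$ are iid standard Gaussian, $\bar{X}$ and $\tX$ are uncorrelated, hence independent, and the two terms in the Pythagorean decomposition above are therefore independent random variables. Factoring the MGF along this independence yields
$$p(\Pi,Q) = \tilde{p}(\Pi,Q) \cdot \EE\exp\pth{-\frac{n\bar{X}^\top(2I_d - Q - Q^\top)\bar{X}}{32\sigma^2}}.$$

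It then remains to evaluate the rightmost factor. Since $\sqrt{n}\,\bar{X} \sim \calN(0, I_d)$, this expectation is a standard Gaussian quadratic form MGF, equal to $\det(I_d + \tfrac{1}{16\sigma^2}(2I_d - Q - Q^\top))^{-1/2}$. As $Q$ is real orthogonal (and hence normal), $Q$ and $Q^\top = Q^{-1}$ are simultaneously unitarily diagonalizable with paired eigenvalues $e^{\pm\ii\theta_\ell}$, so the eigenvalues of $2I_d - Q - Q^\top$ are $2 - 2\cos\theta_\ell = 4\sin^2(\theta_\ell/2) \leq \theta_\ell^2$. Rearranging and taking the square root, this gives precisely \prettyref{eq:MGF_Compare}. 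I do not anticipate any serious obstacle here: the argument is purely linear-algebraic plus one standard Gaussian computation, and the only points requiring mild care are the commutation $\Pi(I-\bfF) = (I-\bfF)\Pi$ (from $\Pi$ permuting rows and fixing $\ones$) and the independence of $\bar{X}$ and $\tX$. Conceptually, the decomposition isolates exactly the $d$-dimensional information deficit between the distance model (which sees only $\tX$) and the dot-product model (which sees all of $X$).
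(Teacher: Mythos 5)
Your proof is correct, and it takes a genuinely different route from the paper's. The paper works through the vectorization $\tH = (I_{nd}-Q^\top\otimes\Pi)(I_d\otimes(I_n-\bfF))$ and identifies the spectrum of $\tH$ by analyzing the eigenspace of $\Pi$ for eigenvalue $1$ (Gram--Schmidt starting from $\ones/\sqrt{n}$), concluding that exactly the $d$ eigenvalues $1-e^{-\ii\theta_\ell}$ of $H$ are replaced by $0$; dividing the two determinant formulas then gives the identity $\tilde{p}(\Pi,Q)=p(\Pi,Q)\prod_{\ell=1}^d\bigl(1+\tfrac{2-2\cos\theta_\ell}{16\sigma^2}\bigr)^{1/2}$, which is bounded via $2-2\cos\theta\le\theta^2$. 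You reach exactly the same intermediate identity, but by a probabilistic decomposition: the commutation $\Pi(I-\bfF)=(I-\bfF)\Pi$ gives $\tX-\Pi\tX Q=(I-\bfF)(X-\Pi XQ)$, the Pythagorean identity splits $\Fnorm{X-\Pi XQ}^2$ into $\Fnorm{\tX-\Pi\tX Q}^2+n\bar{X}^\top(2I_d-Q-Q^\top)\bar{X}$, independence of $\bar X$ and $\tX$ factors the MGF, and the correction term is a standard $d$-dimensional Gaussian quadratic-form MGF whose eigenvalues $2-2\cos\theta_\ell$ (using normality of $Q$ and positive semidefiniteness of $(I-Q)(I-Q)^\top$) give the same product. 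Your route is more elementary and self-contained—it avoids the Kronecker-spectrum bookkeeping and makes transparent that the discrepancy between the two models is exactly the $d$ degrees of freedom carried by the sample mean—while the paper's route has the advantage of reusing the spectral machinery already set up for Lemma \ref{lem:MGF}, of which this lemma is a natural continuation.
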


\begin{proof}
Let $ t=\tfrac{1}{32 \sigma^2} $. Denote by $ \tilde{x}=\vecc(\tX) \in \R^{nd} $ the vectorization of $ \tX $ and recall that $ x=\vecc(X) \in \R^{nd} $ satisfies $ x \sim \calN(0,I_{nd}) $. Then
$$ \Fnorm{\tX - \Pi \tX Q}^2 = \norm{(I_{nd}-Q^\top \otimes \Pi) \tilde{x}}^2 = \norm{ (I_{nd}-Q^\top \otimes \Pi) (I_d \otimes (I_n-\bfF)) x }^2. $$
Denote $ \tH \triangleq (I_{nd}-Q^\top \otimes \Pi) (I_d \otimes (I_n-\bfF)) $, then
$$  \tilde{p}(\Pi,Q) = \expect \exp \pth{ -t x^\top \tH^\top \tH x } = \qth{ \det \pth{I+2t \tH^\top \tH} }^{-\frac{1}{2}}. $$
It suffices to compute the eigenvalues of $ \tH $. Recall that the spectrum of $ \Pi $ is given by \prettyref{eq:Spectrum_Pi}. We claim that the spectrum of $ \tH $ is the following multiset
\begin{equation}
\mathsf{Spec}(\tH) = \pth{\mathsf{Spec}(H) \backslash \{ 1-e^{-\ii \theta_\ell}:\ell=1,\dots,d \}} \cup \sth{ 0 \ \mbox{with multiplicity} \  d },
\label{eq:Spectrum_tH}
\end{equation}
where $ \mathsf{Spec}(H) $ is the spectrum of $ H $ defined in Lemma \ref{lem:MGF}, given by
$$ \mathsf{Spec}(H) = \sth{ 1-e^{-\ii \theta_\ell}\lambda_j : \lambda_j \in \mathsf{Spec}(\Pi),\, j=1,\dots,n,\, \ell=1,\dots,d }. $$

Now we prove \prettyref{eq:Spectrum_tH}. As shown in \prettyref{eq:Spectrum_Pi}, $ \Pi $ has eigenvalue $ 1 $ with multiplicity $ \frakc(\Pi) $, where $ \frakc(\Pi) $ denote the number of cycles. We denote these by $ \lambda_1=\dots=\lambda_{\frakc(\Pi)} =1$. Using the cycle decomposition and the block diagonal structure as in Lemma \ref{lem:MGF}, we know that the eigenvectors corresponding to $ \lambda_1,\dots,\lambda_{\frakc(\Pi)} $ are of the following form
$$ v_i=(0,\dots,0,1,\dots,1,0,\dots,0)^\top, \quad i=1,\ldots,\frakc(\Pi) $$
where the number of $ 1 $'s equals the length of the corresponding cycle. In particular, due to the block diagonal structure, the $ 1 $ blocks in $ v_i $'s do not overlap. Therefore, we know that the vector $ \tilde{v}_1 = \tfrac{1}{\sqrt{n}} \sum_{i=1}^{\frakc(\Pi)}v_i= \tfrac{1}{\sqrt{n}}\mathbf{1}=\tfrac{1}{\sqrt{n}}(1,\dots,1)^\top \in \R^n $ is in the eigenspace of $ 1 $. Using the Gram-Schmidt process, we can construct vectors $ \tilde{v}_2,\dots,\tilde{v}_{\frakc(\Pi)} $ such that $ \{\tilde{v}_i\}_{i=1}^n $ is a orthonormal basis of the eigenspace, i.e.
$$ \iprod{\tilde{v}_i}{\tilde{v}_j} = \delta_{ij},\ \ \mathsf{span}(\tilde{v}_1,\dots,\tilde{v}_{\frakc(\Pi)})=\mathsf{span}(v_1,\dots,v_{\frakc(\Pi)}). $$
Pick an arbitrary eigenvalue $ \mu $ of $ Q^\top $ with eigenvector $ w \in \R^d $, and also pick an arbitrary eigenvalue $ \lambda $ of $ \Pi $ with eigenvector $ v \in \R^n $. Based on the arguments above, if $ \lambda \neq \lambda_1 $, then $ v \perp \tilde{v}_1 $, and therefore
\begin{equation}
\tH(w \otimes v) = w \otimes (I-\bfF)v-(Q^\top w) \otimes \Pi (I-\bfF)v = w \otimes v - \mu w \otimes \lambda v  = (1-\mu \lambda)(w \otimes v).
\label{eq:Eigen_Distance_Same}
\end{equation}
For the eigenpair $ (\lambda_1,\tilde{v}_1) $, we have
\begin{equation}
\tH(w \otimes \tilde{v}_1) = w \otimes (I-\bfF)\tilde{v}_1-(Q^\top w) \otimes \Pi (I-\bfF)\tilde{v}_1 = w \otimes 0 - \mu w \otimes 0 = 0.
\label{eq:Eigen_Distance_Zero}
\end{equation}
Combining \prettyref{eq:Eigen_Distance_Same} and \prettyref{eq:Eigen_Distance_Zero}, we conclude that for $ \ell=1,\dots,d $ and $ j=2,\dots,n $, the eigenvalue $ 1-e^{-\ii \theta_\ell}\lambda_j $ of $ H $ remains to be an eigenvalue of $ \tH $, while the eigenvalues $ 1-e^{-\ii\theta_\ell}\lambda_1 = 1-e^{-\ii\theta_\ell} $ of $ H $ are replaced by $ 0 $ in the spectrum of $ \tH $. Hence we have shown \prettyref{eq:Spectrum_tH} is true.

Using \prettyref{eq:Spectrum_tH} and \prettyref{eq:MGF_Expansion}, we obtain
\begin{align*}
\tilde{p}(\Pi,Q) &= \prod_{j=2}^n \prod_{\ell=1}^d \pth{1+2t|1-e^{-\ii \theta_\ell} \lambda_j|^2}^{-1/2}\\
&= p(\Pi,Q) \prod_{\ell=1}^d \pth{1+2t|1-e^{-\ii \theta_\ell}|^2}^{1/2}\\
&=p(\Pi,Q) \prod_{\ell=1}^d \pth{1+2t(2-2 \cos \theta_\ell)}^{1/2}\\
&\leq p(\Pi,Q) \prod_{\ell=1}^d \pth{1+\frac{\theta_\ell^2}{16 \sigma^2}}^{1/2},
\end{align*}
which completes the proof.
\end{proof}

Applying Lemma \ref{lem:MGF_Compare}, 
the following lemma is the counterpart of Lemma \ref{lem:Estimate_MGF}.

\begin{lemma}\label{lem:Estimate_MGF_Distance}
Suppose $ d=o(\log n) $. For some $ \sigma_0>0 $, let $ \delta = \sigma_0/\sqrt{n} $ and $ N \subset O(d) $ be the $ \delta $-net defined in \prettyref{eq:net}.
\begin{enumerate}
\item[(i)] If $ \sigma_0 = o(n^{-2/d}) $, then
\begin{equation}
\sum_{\Pi \neq I_n} \sum_{Q \in N} \expect \exp \left\{ -\frac{1}{32 \sigma_0^2} \Fnorm{\tX-\Pi \tX Q}^2 \right\} = o(1).
\end{equation}
\item[(ii)] For any $ \eps=\eps(n)>0 $, if $ \sigma_0^{-d} > 16 n 2^{2/\eps}  $, then the following is true
\begin{equation}
\sum_{\diff(\pi,\mathrm{Id}) \geq \eps n} \sum_{Q \in N} \expect \exp \left\{ -\frac{1}{32 \sigma_0^2} \Fnorm{\tX-\Pi \tX Q}^2 \right\} = o(1).
\end{equation}
\end{enumerate}
\end{lemma}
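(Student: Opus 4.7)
The plan is to reduce the distance-model MGF estimate to the dot-product estimate already proved in \prettyref{lem:Estimate_MGF} by invoking the comparison \prettyref{eq:MGF_Compare}, and then show that the extra multiplicative factor $R(Q) \triangleq \prod_{\ell=1}^d (1 + \theta_\ell^2/(16\sigma_0^2))^{1/2}$ causes only a benign perturbation of the sums. A crude but useful bound is $R(Q) \leq \prod_{\ell=1}^d(1 + |\theta_\ell|/(4\sigma_0))$. On the discretization $N$, points are parameterized by $\theta_\ell = m_\ell\delta/4 = m_\ell \sigma_0/(4\sqrt{n})$, so
\begin{equation*}
R(Q) \leq \prod_{\ell=1}^d \pth{1 + \frac{|m_\ell|}{16\sqrt{n}}}.
\end{equation*}

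For part (i), I would repeat the computation leading to the bound on $\sum_{\Pi\neq I_n}\sum_{Q\in N}p(\Pi,Q)$ in the proof of \prettyref{lem:Estimate_MGF}, but with an extra factor $\prod_\ell (1+|m_\ell|/(16\sqrt{n}))$ inside the sum over angles. When $n_1\geq 1$, this factor can be absorbed into the denominator: for each coordinate $\ell$,
\begin{equation*}
\frac{1 + |m_\ell|/(16\sqrt{n})}{(1 + |m_\ell|/(4\sqrt{n}))^{n_1}} \leq \frac{1}{(1+|m_\ell|/(4\sqrt{n}))^{n_1-1}},
\end{equation*}
so the effect is to replace $n_1$ by $n_1-1$ in the exponent, costing at most $d$ powers overall. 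Alternatively, one can simply absorb the factor into the local-entropy polynomial $(1+|m_\ell|/2)^{2d^2}$ by replacing $2d^2$ with $2d^2+1$. Either route gives estimates of the same form as \prettyref{eq:Tn1_small}, \prettyref{eq:Tn1_large1}, and \prettyref{eq:Tn1_large2}, each deteriorated by at most a factor $(\sqrt{n}/\sigma_0)^d = \sigma_0^{-d} n^{d/2}$. Since the leading estimate for the $n_1=0$ contribution already had the factor $(C\sigma_0)^{dn/2}$ and for $n_1 \leq \sqrt{n}$ had $(C\sigma_0)^d n^2$ to the power $(n-n_1)/2$, the extra $\sigma_0^{-d}$ costs are swallowed by the exponentially small factor $(C\sigma_0 n^{2/d})^{dn/2} = o(1)$ guaranteed by $\sigma_0 = o(n^{-2/d})$ with room to spare, and in the $n_1\geq\sqrt{n}$ regime they are again swallowed by the strong decay from \prettyref{eq:Tn1_large1}.

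For part (ii), the same substitution is performed in the derangement sum. After \prettyref{eq:MGF_Estimate_New}, each coordinate picks up the benign factor $(1+|m_\ell|/(16\sqrt{n}))$, which is absorbed either into the entropy polynomial or (when $n-k\geq 1$) into the $(1+|m_\ell|/(4\sqrt{n}))^{n-k}$ denominator as above. This replaces $n-k$ by $n-k-1$ in each of $d$ coordinates, a negligible modification since the final bound is a superexponentially decaying series controlled by the $(16L/k)^{k/2}$ factor arising from the cycle generating function \prettyref{eq:MGF-randomcycle}, for which the hypothesis $\sigma_0^{-d}>16n\cdot 2^{2/\epsilon}$ was chosen precisely to give a constant-multiplicative margin. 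The two bounds $J_1$ and $J_2$ of \prettyref{eq:Almost_k_small} and \prettyref{eq:Almost_k_large} then go through, up to increasing the polynomial prefactors $n^{2d^3}$ and $L^{2d^2+1}$ by a further $n^{O(d^3)}$ factor that is harmless under $d=o(\log n)$.

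The main obstacle is the apparent blow-up of $R(Q)$ when $\sigma_0$ is small and $|\theta_\ell|$ is of order one: in that region $R(Q)$ can be as large as $\sigma_0^{-d}$, which is comparable to the magnitude of the sums we are trying to control. However, precisely in that region the denominator $(1+|\theta_\ell|/\sigma_0)^{n_1}$ is enormous, so the \emph{per-angle} ratio is $O(1)$; the care needed is only in the $n_1=0$ (derangement) case in part (i), where one must verify that the entropy bound $|N|\leq (C\sqrt{n}/\sigma_0)^{O(d^2)}$ times $R$ summed over the net is dominated by the $(C\sigma_0)^{dn/2}$ from $p(\Pi,Q)$, which it is by a wide margin under the hypothesis $\sigma_0=o(n^{-2/d})$ and $d=o(\log n)$.
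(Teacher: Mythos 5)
Your proposal is correct and follows essentially the same route as the paper: invoke the comparison bound \prettyref{eq:MGF_Compare}, note that on the net the extra factor $\prod_\ell(1+\theta_\ell^2/(16\sigma_0^2))^{1/2}=\prod_\ell(1+m_\ell^2/(256n))^{1/2}$ can be absorbed into the local-entropy polynomial by bumping the exponent $2d^2$ to $2d^2+1$ (your second route, which is exactly the paper's), and then rerun the estimates of \prettyref{lem:Estimate_MGF} regime by regime. The paper's proof does precisely this, obtaining the analogues of \prettyref{eq:Tn1_small}, \prettyref{eq:Tn1_large1}, \prettyref{eq:Tn1_large2} (its \prettyref{eq:tTn1_Estimate}) and repeating the summations \prettyref{eq:n1_small}, \prettyref{eq:n1_large}, \prettyref{eq:Almost_k_small}, \prettyref{eq:Almost_k_large}.
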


\begin{proof}
(i) Similarly as in Lemma \ref{lem:Estimate_MGF} Part (i), using \eqref{eq:MGF_Compare} we have
\begin{align*}
&~ \sum_{\Pi \neq I_n} \sum_{Q \in N} \tilde{p}(\Pi,Q)\\
\leq &~ \sum_{n_1=0}^{n-2} \sum_{m_1,\dots,m_d= \lfloor -\frac{4 \pi}{\delta} \rfloor }^{\lceil \frac{4 \pi}{\delta} \rceil} \left\{ \left| N\pth{ e^{\ii \frac{m_1 \delta}{4}},\dots,e^{\ii \frac{m_d \delta}{4}} } \right| (n-n_1)! \binom{n}{n_1} (C\sigma_0)^{\frac{n+n_1}{2}d} \right.\\
&~ \qquad \qquad \times \left. \qth{ \prod_{\ell =1}^d \frac{1}{(\frac{\delta |m_\ell|}{4} +\sigma_0)^{n_1}} \pth{1+\frac{\delta^2 m_\ell^2}{256 \sigma_0^2}}^{\frac{1}{2}} } \right\} \\
\leq &~ \sum_{n_1=0}^{n-2} \pth{(C\sigma_0)^d n^2}^{\frac{n-n_1}{2}} \qth{ \sum_{m= \lfloor -\frac{4 \pi}{\delta} \rfloor }^{\lceil \frac{4 \pi}{\delta} \rceil} \frac{1}{(1+\frac{\delta}{4\sigma_0}|m|)^{n_1}} \pth{1+\frac{\delta^2 m^2}{256 \sigma_0^2}}^{\frac{1}{2}} \pth{1+\frac{|m|}{2}}^{2d^2} }^d\\
= &~ \sum_{n_1=0}^{n-2} \pth{(C\sigma_0)^d n^2}^{\frac{n-n_1}{2}} \qth{ \sum_{m= \lfloor -\frac{4 \pi}{\delta} \rfloor }^{\lceil \frac{4 \pi}{\delta} \rceil} \frac{1}{(1+\frac{|m|}{4\sqrt{n}})^{n_1}} \pth{1+\frac{m^2}{256 n}}^{\frac{1}{2}} \pth{1+\frac{|m|}{2}}^{2d^2} }^d\\
\leq &~ \sum_{n_1=0}^{n-2} \pth{(C\sigma_0)^d n^2}^{\frac{n-n_1}{2}} \qth{ \sum_{m= \lfloor -\frac{4 \pi}{\delta} \rfloor }^{\lceil \frac{4 \pi}{\delta} \rceil} \frac{1}{(1+\frac{|m|}{4\sqrt{n}})^{n_1}}  \pth{1+\frac{|m|}{2}}^{2d^2 + 1} }^d.
\end{align*}
Let
$$ \tT_{n_1} \triangleq \sum_{m= \lfloor -\frac{4 \pi}{\delta} \rfloor }^{\lceil \frac{4 \pi}{\delta} \rceil} \frac{1}{(1+\frac{|m|}{4\sqrt{n}})^{n_1}}  \pth{1+\frac{|m|}{2}}^{2d^2 + 1}. $$
Using the same arguments as in \prettyref{eq:Tn1_small}, \prettyref{eq:Tn1_large1} and \prettyref{eq:Tn1_large2}, $ \tT_{n_1} $ can be bounded by
\begin{equation}
\tT_{n_1}^d \leq \left\{
\begin{aligned}
& C^{d^3} n^{d^3 +d} L^{2d^2+2} & &\mbox{if } n_1 \leq \sqrt{n},\\
& C^d (8 \sqrt{n})^{2d^2+2} & &\mbox{if } \sqrt{n} < n_1 < 32\sqrt{n}(\log n)^2,\\
& C^d & &\mbox{if } n_1 \geq 32\sqrt{n}(\log n)^2,
\end{aligned}
\right.  \label{eq:tTn1_Estimate}
\end{equation}
where $ L=\sigma_0^{-d} $. Consequently, following the similar estimates in \prettyref{eq:n1_small} and \prettyref{eq:n1_large},
$$ \sum_{\Pi \neq I_n} \sum_{Q \in N} \tilde{p}(\Pi,Q) = o(1),  $$
which completes the proof.

\smallskip

(ii) Combined with \prettyref{eq:MGF_Compare}, using the same arguments as in Lemma \ref{lem:Estimate_MGF} Part (ii) yields
\begin{equation*}
\sum_{\diff(\pi,\mathrm{Id}) \geq \eps n} \sum_{Q \in N} \tilde{p}(\Pi,Q) \leq \sum_{k=\eps n}^n \binom{n}{k}L^{-k} k! \pth{\frac{16 L}{k}}^{k/2} \tT_{n-k}^d = \tJ_1 + \tJ_2
\end{equation*}
where
\begin{align*}
\tJ_1 & \triangleq \sum_{k=\eps n}^{n-32 \sqrt{n} (\log n)^2} \binom{n}{k}L^{-k} k! \pth{\frac{16 L}{k}}^{k/2} \tT_{n-k}^d, \\
\tJ_2 & \triangleq \sum_{k=n-32 \sqrt{n}(\log n)^2 +1 }^n \binom{n}{k}L^{-k} k! \pth{\frac{16 L}{k}}^{k/2} \tT_{n-k}^d.
\end{align*}
By \prettyref{eq:tTn1_Estimate}, these two term can be bounded in the same way as in \prettyref{eq:Almost_k_small} and \prettyref{eq:Almost_k_large}. Thus,
$$ \sum_{\diff(\pi,\mathrm{Id}) \geq \eps n} \sum_{Q \in N} \tilde{p}(\Pi,Q) = o(1), $$
which completes the proof.
\end{proof}


Lemma \ref{lem:Estimate_MGF_Distance} implies the following high probability estimates. The proof is the same as in Lemma \ref{lem:High_Prob_Event} via Chernoff bound and therefore we omit it here.

\begin{lemma}\label{lem:High_Prob_Event_Distance}
Suppose $ d=o(\log n) $. For some $ \sigma_0>0 $, let $ \delta = \sigma_0/\sqrt{n} $ and $ N \subset O(d) $ be the $ \delta $-net defined in \prettyref{eq:net}.
\begin{enumerate}
\item[(i)] If $ \sigma_0 = o(n^{-2/d}) $, for any constant $ c>0 $, the following inequality is true with high probability
\begin{equation}
\min_{\Pi \neq I_n} \min_{Q \in N} \Fnorm{\tX-\Pi \tX Q} \geq c \sqrt{d} \sigma_0.
\end{equation}
\item[(ii)] For any $ \eps=\eps(n)>0 $, if $ \sigma_0^{-d} > 16 n 2^{2/\eps}  $, the following is true for any fixed constant $ c>0 $ with high probability
\begin{equation}
\min_{\diff(\pi,\mathrm{Id}) \geq \eps n} \min_{Q \in N} \Fnorm{\tX-\Pi \tX Q} \geq c \sqrt{d} \sigma_0.
\end{equation}
\end{enumerate}
\end{lemma}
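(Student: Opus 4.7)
The plan is to mirror the proof of \prettyref{lem:High_Prob_Event} essentially verbatim, with $\tX$ replacing $X$ throughout and \prettyref{lem:Estimate_MGF_Distance} replacing \prettyref{lem:Estimate_MGF} as the key input. Since the MGF estimates for the centered matrix $\tX$ have already been established (via \prettyref{lem:MGF_Compare} and \prettyref{lem:Estimate_MGF_Distance}), the passage from MGF bounds to tail bounds is a routine Chernoff-plus-union-bound exercise.

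For part (i), the plan is as follows. Fix $\Pi \neq I_n$ and $Q \in N$. For any $t > 0$, the Chernoff inequality gives
\[
\prob{\Fnorm{\tX - \Pi \tX Q} < c\sqrt{d}\sigma_0}
= \prob{e^{-t\Fnorm{\tX - \Pi \tX Q}^2} > e^{-tc^2 d \sigma_0^2}}
\leq e^{tc^2 d \sigma_0^2}\, \tilde{p}(\Pi, Q),
\]
where $\tilde p(\Pi,Q)$ is the MGF defined in \prettyref{eq:MGF_Distance}. Taking $t = \tfrac{1}{32\sigma_0^2}$ and unioning over all $\Pi \neq I_n$ and $Q \in N$,
\[
\prob{\min_{\Pi \neq I_n} \min_{Q \in N} \Fnorm{\tX - \Pi \tX Q} < c\sqrt{d}\sigma_0}
\leq e^{c^2 d/32} \sum_{\Pi \neq I_n} \sum_{Q \in N} \tilde{p}(\Pi, Q).
\]
Under the assumption $\sigma_0 = o(n^{-2/d})$, \prettyref{lem:Estimate_MGF_Distance}(i) guarantees that the double sum is $o(1)$, and since $d = o(\log n)$ the prefactor $e^{c^2 d/32}$ is subpolynomial in $n$, so the probability on the right is $o(1)$. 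This yields the desired high-probability lower bound.

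Part (ii) follows by exactly the same Chernoff-plus-union-bound argument, but with the outer union taken over permutations $\Pi$ satisfying $\diff(\pi, \mathrm{Id}) \geq \eps n$ instead of all $\Pi \neq I_n$; the summation bound is then supplied by \prettyref{lem:Estimate_MGF_Distance}(ii) under the condition $\sigma_0^{-d} > 16 n 2^{2/\eps}$.

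There is no substantive obstacle in this step: all of the combinatorial and spectral work (the cycle-decomposition formula for $\tilde p$ via \prettyref{lem:MGF_Compare}, the localized covering of $O(d)$, and the delicate double sum over $\pi$ and $Q$) has already been absorbed into \prettyref{lem:Estimate_MGF_Distance}. The present lemma is merely the translation of those expectation bounds into tail bounds via Chernoff, and the multiplicative loss $e^{c^2 d/32}$ is harmless in the regime $d = o(\log n)$.
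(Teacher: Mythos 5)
Your proposal is correct and matches the paper's approach exactly: the paper explicitly omits this proof, noting it is "the same as in Lemma \ref{lem:High_Prob_Event} via Chernoff bound," which is precisely the Chernoff-plus-union-bound argument you give, with \prettyref{lem:Estimate_MGF_Distance} supplying the summed MGF bound and the prefactor $e^{c^2 d/32}$ absorbed since $d = o(\log n)$.
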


Now we are ready to prove \prettyref{thm:distance}
. Similarly as in the dot-product model (see the remark following Theorem \ref{thm:main}), for almost perfect recovery, we actually prove a stronger nonasymptotic bound: For all sufficiently small $ \eps $, if $ \sigma^{-d} > 16 n 2^{2/\eps} $, then $ \overlap(\tilde{\pi}_{\mathrm{AML}},\pi^*) \geq 1-\eps $ with high probability, which clearly implies Theorem \ref{thm:distance} by taking $ \sigma \ll n^{-1/d} $.

\smallskip

\begin{proof}[Proof of Theorem \ref{thm:distance}]
(i) Let $ N $ be the $ \delta $-net for $ O(d) $ defined in \prettyref{eq:net}. Following the same argument as in Theorem \ref{thm:main}
\begin{align*}
\prob{ \|\tX^\top \Pi^\top \tY\|_* \geq \|\tX^\top \tY\|_* } \leq \prob{ \max_{Q \in N} \Iprod{\tX^\top \Pi^\top \tY}{Q} \geq (1-\delta^2)\Iprod{\tX^\top \tY}{I_d} }.
\end{align*}
For fixed $ \Pi $ and $ Q $, we have
\begin{multline*}
\prob{ \Iprod{\tX^\top \Pi^\top \tY}{Q} \geq (1-\delta^2)\Iprod{\tX^\top \tY}{I_d} }\\
=\prob{ \sigma \Iprod{\tZ}{(1-\delta^2)\tX-\Pi \tX Q} \geq (1-\delta^2)\fnorm{\tX}^2 - \Iprod{\tX}{\Pi \tX Q} }. 
\end{multline*}
Since the entries of $ \tilde{Z} $ are not independent, we need to be more careful:
\begin{align*}
    \Iprod{\tZ}{(1-\delta^2)\tX-\Pi \tX Q} 
    & = \Iprod{(I-\bfF)Z}{(1-\delta^2)\tX-\Pi \tX Q} \\
    & = \Iprod{Z}{(I-\bfF)((1-\delta^2)\tX-\Pi \tX Q)} \\
    & = \Iprod{Z}{(1-\delta^2)\tX - \Pi \tX Q},
\end{align*} 
because $(I-\bfF)\tilde X=\tilde X$ and 
$I-\bfF$ commutes with any permutation matrix $\Pi$.
Therefore, similarly as in \prettyref{eq:Prob_Before_Reduction},
\begin{align}
& \prob{ \Iprod{\tX^\top \Pi^\top \tY}{Q} \geq (1-\delta^2)\Iprod{\tX^\top \tY}{I_d} } \nonumber \\
= & \prob{ \sigma \calN\left( 0,(1-\delta^2)\Fnorm{\tX-\Pi \tX Q}^2 - \delta^4 \Fnorm{\tX}^2 \right) \geq \frac{1}{2} \Fnorm{\tX-\Pi \tX Q}^2 - \delta^2 \Fnorm{\tX}^2 } \nonumber \\
\leq & \prob{ \sigma \calN\left( 0,\Fnorm{\tX-\Pi \tX Q}^2 \right) \geq \frac{1}{2} \Fnorm{\tX-\Pi \tX Q}^2 - \delta^2 \Fnorm{\tX}^2 }. \label{eq:Prob_Before_Reduction_Distance}
\end{align}

Consider the events
$$ \calE_1 \triangleq \left\{ cdn \leq \Fnorm{\tX}^2 \leq Cdn \right\},\ \ \calE_2 \triangleq \sth{ \min_{\Pi \neq I} \min_{Q \in N} \Fnorm{\tX-\Pi \tX Q} \geq C \sqrt{d} \sigma }. $$
We claim that $ \prob{\calE_1}=1-o(1) $. To see this, note that 
\begin{multline*}
\fnorm{\tX}^2 = \Iprod{(I-\bfF)X}{(I-\bfF)X} = \Iprod{X}{(I-\bfF)X}\\
= \Tr(X^\top (I-\bfF)X) = \sum_{i=1}^d \sum_{\alpha,\beta=1}^n X_{\alpha i}X_{\beta i}(I-F)_{\alpha \beta}.
\end{multline*}
For each $ i=1,\dots,d $, we have $ \sum_{\alpha,\beta=1}^n X_{\alpha i}X_{\beta i}(I-\bfF)_{\alpha \beta} = \mathsf{Col}_i(X)^\top (I-\bfF) \mathsf{Col}_i(X) $, where $ \mathsf{Col}_i(X) \sim \calN(0,I_n) $ is the $ i $-th column of $ X $.
By Hanson-Wright inequality (see e.g. \cite[Theorem 1.1]{RudVer13}), for each $ t \geq 0 $,
\begin{multline*}
\prob{ |\mathsf{Col}_i(X)^\top (I-\bfF) \mathsf{Col}_i(X) - \expect \mathsf{Col}_i(X)^\top (I-\bfF) \mathsf{Col}_i(X)| >t }\\
\leq 2 \exp \qth{ -c \min \pth{ \frac{t^2}{\Fnorm{I-\bfF}^2} , \frac{t}{\norm{I-\bfF}} } }.
\end{multline*}
Taking $ t=n^{3/4} $ and simplifying the above inequality yield
\begin{equation}
\prob{ |\mathsf{Col}_i(X)^\top (I-\bfF) \mathsf{Col}_i(X) - (n-1)| > n^{3/4} } \leq 2 \exp \pth{-c n^{1/2}}. \label{eq:tX_Norm_Concentration}
\end{equation}
Note that \prettyref{eq:tX_Norm_Concentration} is true for every $ i=1,\dots,d $, and the columns $ \mathsf{Col}_i(X) $'s are independent. This immediately gives us $ \prob{\calE_1} = 1-o(1) $. Moreover, by Lemma \ref{lem:High_Prob_Event_Distance} we also have $ \prob{\calE_2}=1-o(1) $. On the events $ \calE_1 $ and $ \calE_2 $, the estimate \prettyref{eq:Prob_Before_Reduction_Distance} reduces to
\begin{multline}
\prob{ \Iprod{\tX^\top \Pi^\top \tY}{Q} \geq (1-\delta^2)\Iprod{\tX^\top \tY}{I_d}, \calE_1,\calE_2  }\\ 
\leq \prob{ \sigma \calN\left( 0,\Fnorm{\tX-\Pi \tX Q}^2 \right) \geq \frac{1}{4} \Fnorm{\tX-\Pi \tX Q}^2 } 
\leq \expect \exp \left\{ -\frac{1}{32 \sigma^2} \Fnorm{\tX-\Pi \tX Q}^2 \right\}. \label{eq:Prob_Reduction_Distance}
\end{multline}
Combining this with Lemma \ref{lem:Estimate_MGF_Distance} and applying a union bound, we have
\begin{align*}
&~ \prob{ \max_{\Pi \neq I} \|\tX^\top \Pi^\top \tY\|_* \geq \|\tX^\top \tY\|_*  }\\
\leq &~ \prob{ \max_{\Pi \neq I_n} \max_{Q \in N} \Iprod{\tX^\top \Pi^\top \tY}{Q} \geq (1-\delta^2)\Iprod{\tX^\top \tY}{I_d} ,\calE_1,\calE_2 } + \prob{ \calE_1^c } + \prob{ \calE_2^c }\\
\leq &~ \sum_{\Pi \neq I_n} \sum_{Q \in N} \prob{  \Iprod{\tX^\top \Pi^\top \tY}{Q} \geq (1-\delta^2)\Iprod{\tX^\top \tY}{I_d} , \calE_1,\calE_2 } + o(1)\\
\leq &~ \sum_{\Pi \neq I_n}\sum_{Q \in N}  \expect \exp \left\{ -\frac{1}{32 \sigma^2} \Fnorm{\tX-\Pi \tX Q}^2 \right\} + o(1)\\
= &~ o(1).
\end{align*}
This implies $ \tilde{\pi}_{\mathrm{AML}} = \mathrm{Id} $ with high probability, which completes the proof.

\smallskip

(ii) The idea is the same as Theorem \ref{thm:main} Part (ii). For a sufficiently small $ \eps=\eps(n)>0 $, take $ \sigma^{-d} > 16 n 2^{2/\eps} $ and consider the event
$$ \calE_2' \triangleq \sth{ \min_{\diff(\pi,\mathrm{Id}) \geq \eps n} \min_{Q \in N} \Fnorm{\tX-\Pi \tX Q} \geq C \sqrt{d} \sigma }. $$
Then Lemma \ref{lem:High_Prob_Event_Distance} implies $ \prob{\calE_2'}=1-o(1) $. On the event $ \calE_1 $ and $ \calE_2' $, the reduction estimate \prettyref{eq:Prob_Before_Reduction_Distance} for $ \Pi $ with $ \diff(\pi,\mathrm{Id}) \geq \eps n $ still holds
$$ \prob{ \Iprod{\tX^\top \Pi^\top \tY}{Q} \geq (1-\delta^2)\Iprod{\tX^\top \tY}{I_d}, \calE_1,\calE_2'  } \leq \expect \exp \left\{ -\frac{1}{32 \sigma^2} \Fnorm{\tX-\Pi \tX Q}^2 \right\}. $$
Combined with Lemma \ref{lem:Estimate_MGF_Distance}, we have
\begin{multline*}
\prob{ \max_{\diff(\pi,\mathrm{Id}) \geq \eps n} \|\tX^\top \Pi^\top \tY\|_* \geq \|\tX^\top \tY\|_*  }\\
\leq \sum_{\diff(\pi,\mathrm{Id}) \geq \eps n} \sum_{Q \in N} \expect \exp \left\{ -\frac{1}{32 \sigma^2} \Fnorm{\tX-\Pi \tX Q}^2 \right\} + o(1) = o(1).
\end{multline*}
Thus,
$$ \prob{ \overlap(\tilde{\pi}_{\mathrm{AML}},\pi^*) \geq 1-\eps  } = 1-o(1), $$
which completes the proof.
\end{proof}


\section{Information-theoretic necessary conditions}
\label{app:lower_bounds}

\newcommand{\Pigood}{\mathbf{\Pi}_{\sf good}}
\newcommand{\Pibad}{\mathbf{\Pi}_{\sf bad}}

In this section, we derive necessary conditions for both almost perfect recovery and perfect recovery
for the linear assignment model~\prettyref{eq:model-LAP}. These conditions also hold for 
the weaker dot-product and distance models.

\subsection{Impossibility of almost perfect recovery}

We first derive a necessary condition for almost perfect recovery that holds for any $d$ via a simple mutual information argument.
Then we focus on the special case where $d$ is a constant and
give a much sharper analysis,
improving the necessary condition 
from $\sigma \le n^{-(1-o(1))/d}$ to $\sigma = o(n^{-1/d})$.
Note that achieving a vanishing recovery error in expectation is equivalent to that with high probability (see e.g.~\cite[Appendix A]{HajekWuXu_one_info_lim15}). Thus without loss of generality, we focus on the expected number of errors $\expect{ \diff \left( \pi^*, \hat{\pi} \right) }$
in this subsection.


\begin{proposition}\label{prop:impossiblity}
For any $\epsilon \in (0,1)$, if there exists an estimator $\hat{\pi}\equiv \hat{\pi}(X,Y)$ such that $\expect{ \diff \left( \pi^*, \hat{\pi} \right) } \le \epsilon n$, then 
\begin{align}
\frac{d}{2} \log \left( 1+ \frac{1}{\sigma^2} \right) - \left( 1-\epsilon \right) \log n + 1 + \frac{\log (n+1) }{n} \ge 0. \label{eq:MI_assumption}
\end{align}
\end{proposition}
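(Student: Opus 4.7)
My plan is to run the standard Fano-type mutual-information argument sketched at the beginning of Section~\ref{sec:negative-sketch}. The strategy is to produce two bounds on $I(\pi^*; X, Y)$ and compare them: a lower bound via Fano's inequality applied to the given estimator $\hat\pi$, and an upper bound via the Gaussian channel capacity of the map $\pi^* \mapsto Y$ conditioned on $X$. Rearranging what falls out will yield precisely \prettyref{eq:MI_assumption}.

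For the Fano lower bound, I start from the data-processing inequality $H(\pi^* \mid X, Y) \le H(\pi^* \mid \hat\pi)$. Letting $D \triangleq \diff(\pi^*, \hat\pi)$, the key combinatorial observation is that conditional on $\hat\pi$ and the value of $D$, the permutation $\pi^*$ is determined by its values on the at most $D$ positions of disagreement with $\hat\pi$, yielding at most $\binom{n}{D} D! \le n^D$ choices. This gives the permutation Fano bound
\[
H(\pi^* \mid \hat\pi) \le H(D) + \Expect[D]\log n \le \log(n+1) + \epsilon n \log n.
\]
Combined with the Stirling lower bound $H(\pi^*) = \log n! \ge n \log n - n$, this produces
\[
I(\pi^*; X, Y) \ge (1-\epsilon)\, n \log n - n - \log(n+1).
\]

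For the capacity upper bound, since $\pi^*$ is independent of $X$, I write $I(\pi^*; X, Y) = I(\pi^*; Y \mid X) \le h(Y) - h(Y \mid X, \pi^*) = h(Y) - \tfrac{nd}{2}\log(2\pi e \sigma^2)$. The key observation is that $\Pi^* X \eqdistr X$ by exchangeability of the iid rows of $X$, so unconditionally $Y$ has zero mean and covariance $(1+\sigma^2) I_{nd}$; hence by the maximum-entropy principle $h(Y) \le \tfrac{nd}{2}\log(2\pi e(1+\sigma^2))$, and therefore $I(\pi^*; X, Y) \le \tfrac{nd}{2}\log(1+\sigma^{-2})$.

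Dividing the combination of the two bounds by $n$ and rearranging gives exactly \prettyref{eq:MI_assumption}. There is no real obstacle in this proof: it is a routine application of Fano plus Gaussian capacity, with the only subtlety being the crude counting bound $\binom{n}{D}D! \le n^D$ required for the permutation-valued Fano inequality. As the remarks after the proposition emphasize, this simple mutual-information bound is actually loose for small $d$; sharpening it to the optimal $\sigma = o(n^{-1/d})$ in the constant-$d$ regime is precisely what requires the more delicate posterior-measure analysis outlined in Section~\ref{sec:negative-sketch}, rather than anything from this argument.
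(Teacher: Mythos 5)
Your proposal is correct and follows essentially the same route as the paper's proof: the same Fano/counting lower bound on $I(\pi^*;X,Y)$ via conditioning on $D=\diff(\pi^*,\hat\pi)$ with the $n^D$ count and $H(D)\le\log(n+1)$, and the same Gaussian channel capacity upper bound $\frac{nd}{2}\log(1+\sigma^{-2})$ (your max-entropy phrasing of that bound is just a minor rewording of the paper's argument).
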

The necessary condition~\prettyref{eq:MI_assumption} further specializes to:
\begin{itemize}
\item $d=o(\log n)$: 
\begin{align}
\sigma =O\left( n^{-(1-\epsilon)/d} \right). \label{eq:lb1}
\end{align}
This yields \prettyref{thm:opt}\eqref{opt2} and resolves  \cite[Conjecture 1.4, item 1]{kunisky2022strong} in the positive;
\item $d=\Theta(\log n)$:
$$
\sigma \le \frac{1-\epsilon+o(1)}{\sqrt{n^{2/d}-1}} ;
$$


\item $d=\omega(\log n)$: 
$$
\sigma \le \sqrt{ \frac{d}{2(1-\epsilon -o(1)) \log n}}.
$$ 
In this case, this necessary condition matches the sufficient condition of almost perfect recovery in~\cite[Theorem 1]{dai2020achievability}
and~\cite[Section A.2]{kunisky2022strong} up to $1+o(1)$ factor,
thereby determining the sharp  information-theoretic limit for the linear assignment model in high dimensions.
\end{itemize}
\begin{proof}
Since $\pi^* \to (X,Y) \to \hat{\pi}$ form a Markov chain, by the data processing inequality of mutual information, we have
\begin{align}
I \left(\pi^*;X,Y \right) \ge I \left(\pi^*; \hat{\pi} \right) 
=H(\pi^*) - H \left( \pi^* | \hat{\pi} \right).
 \label{eq:MI_1}
\end{align}
On the one hand, note that $H(\pi^*)= \log(n!) \ge n \log n - n$. Moreover, for any fixed realization of $\hat \pi$,
the number of $\pi^*$ such that 
$ \diff \left( \pi^*, \hat{\pi} \right)=\ell$ is $\binom{n}{\ell} !\ell \le n^\ell$, 
where $!\ell$ denotes the number of derangements of $\ell$ elements, given by 
\[
!\ell = \ell! \sum_{i=0}^\ell \frac{(-1)^i}{i!} = \qth{\frac{\ell!}{e} },
\]
and $[\cdot]$ denotes rounding to the nearest integer.
Therefore,
$$
H \left( \pi^* | \hat \pi,\diff \left( \pi^*, \hat{\pi} \right)  \right) \le  \expect{\diff \left( \pi^*, \hat{\pi} \right) } \log n 
\le \epsilon n \log n.
$$
Furthermore, $\diff \left( \pi^*, \hat{\pi}\right) $ takes values in $\{0,1, \ldots, n\}$. Thus from the chain rule,
\begin{align}
H ( \pi^* | \hat \pi) = H ( \diff \left( \pi^*, \hat{\pi}\right)  | \hat \pi) + H \left(  \pi^* | \hat \pi, \diff \left( \pi^*, \hat{\pi}\right) \right)
\le \log (n+1) + \epsilon n \log n.  \label{eq:MI_2}
\end{align}
On the other hand, the information provided by the observation $(X,Y)$ about $\pi^*$ satisfies
\begin{align}
I(\pi^*;X,Y)
= & ~ I\left(\Pi^* X;\Pi^* X + \sigma Z | X \right)  \nonumber \\
\stepa{\leq} & ~ \frac{nd}{2} \log\pth{1+\frac{\Expect[\|X\|^2]}{nd\sigma^2}}  \nonumber \\
= & ~ \frac{nd}{2} \log\pth{1+\frac{1}{\sigma^2}}, \label{eq:MI_3}
\end{align}
where $(a)$ follows from the Gaussian channel capacity formula and the fact that the 
mutual information in the Gaussian channel under a second moment constraint is maximized by the
Gaussian input distribution. 
Combining~\prettyref{eq:MI_1}--\prettyref{eq:MI_3}, we get that 
$$
 \frac{nd}{2} \log\pth{1+\frac{1}{\sigma^2}} \ge \left( 1-\epsilon \right) n \log n - n - \log (n+1),
$$
arriving at the desired necessary condition~\prettyref{eq:MI_assumption}. 
\end{proof}


While the negative result in~\prettyref{prop:impossiblity} holds for any $d$, 
the necessary condition~\prettyref{eq:MI_assumption} turns out to be loose for bounded $d$. 
The following result gives the optimal condition in this case.
%

\begin{theorem}\label{thm:impossibility}
Assume $\sigma = \sigma_0 n^{-1/d}$ for any constant $\sigma_0 \in (0,1/2)$. 
There exists a constant $\delta_0(\sigma_0,d)$ that only depends on $\sigma_0, d$ such that 
for any estimator $\hat \Pi$ and all sufficiently large $n$,
$$
\expect{\diff \left(\Pi^*, \hat \Pi \right)} \ge \delta_0 n.
$$
\end{theorem}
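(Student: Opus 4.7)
The plan is to adapt the posterior-analysis program of \cite{DWXY21}, as outlined in Section~2.2. Without loss of generality assume $\pi^* = \mathrm{Id}$, and abbreviate the log-likelihood by $L(\pi)$. The first reduction is that it suffices to exhibit a constant $\delta_0 = \delta_0(\sigma_0,d)$ such that a random permutation $\pi$ drawn from the posterior $P(\pi \mid X,Y)$ satisfies $\diff(\pi,\mathrm{Id}) \ge \delta_0 n$ with probability bounded below by some constant $\beta > 0$; a standard averaging argument then forces every estimator $\hat \Pi$ to incur $\Expect[\diff(\Pi^*,\hat\Pi)] \ge \tfrac{\beta\delta_0}{2}n$. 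To prove this posterior statement, I will split $\fS_n$ into a \emph{good} set $\mathcal{G} = \{\pi : \diff(\pi,\mathrm{Id}) < \delta_0 n\}$ and its complement $\mathcal{B}$, and show that with constant probability over $(X,Y)$ the weight $W(\mathcal{B}) := \sum_{\pi \in \mathcal{B}} e^{L(\pi)-L(\mathrm{Id})}$ exceeds $W(\mathcal{G}) := \sum_{\pi \in \mathcal{G}} e^{L(\pi)-L(\mathrm{Id})}$.

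To control $W(\mathcal{G})$ I would use a truncated first-moment computation. Integrating out the Gaussian noise $Z$ conditionally on $X$, each summand $\Expect[e^{L(\pi)-L(\mathrm{Id})}]$ becomes a Gaussian MGF of the type $p(\Pi,I)$ in \prettyref{eq:MGF}, and the cycle-decomposition machinery of \prettyref{lem:MGF} gives $p(\Pi,I) \le (C\sigma)^{d(n-\mathfrak{c}(\Pi))}$. Replaying \prettyref{lem:Estimate_MGF}(ii) with $Q=I$ fixed and the Hamming constraint $\diff(\pi,\mathrm{Id}) < \delta_0 n$, I expect that for $\sigma = \sigma_0 n^{-1/d}$ with $\sigma_0 < 1/2$ and $\delta_0$ taken sufficiently small depending on $\sigma_0$ and $d$, the sum $\Expect[W(\mathcal{G})-1]$ is bounded by a constant; Markov's inequality then yields $W(\mathcal{G}) = O(1)$ with constant probability.

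To lower-bound $W(\mathcal{B})$, I will follow~\cite{kunisky2022strong} in searching for augmenting $2$-orbits. An unordered pair $\{i,j\}$ is \emph{augmenting} if the swap orbit $O_{ij}$ satisfies $\Delta(O_{ij}) = \Iprod{X_j - X_i}{Y_i - Y_j} \ge 0$; by the decomposition \prettyref{eq:L_diff_decomp}, any permutation obtained from $\mathrm{Id}$ by composing vertex-disjoint augmenting transpositions satisfies $L(\pi) \ge L(\mathrm{Id})$. A first-moment computation restricted to pairs with $\|X_i - X_j\|$ in a window around $n^{-1/d}$ should give $\Expect N = \Theta(n^2)$ augmenting pairs, and a second-moment computation, decomposed by whether two pairs share zero or one vertex, is expected to give $\var(N) = O((\Expect N)^2 / n)$, so that $N = \Theta(n^2)$ with high probability by Chebyshev. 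Tur\'an's theorem applied to the graph on $[n]$ whose edges are the augmenting pairs then extracts a matching of size $m = \Theta(n)$; taking arbitrary subsets of this matching produces $2^m$ permutations in $\mathcal{B}$ (for $\delta_0$ smaller than the matching density), each of posterior weight at least $1$ relative to $\mathrm{Id}$, so $W(\mathcal{B}) \ge 2^{\Omega(n)}$ and dwarfs $W(\mathcal{G})$ with the required probability.

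The main obstacle I anticipate is the second-moment estimate for $N$. Augmenting indicators for pairs $\{i,j\}$ and $\{i,k\}$ sharing a vertex are both driven by the coordinates $(X_i, Y_i)$, producing a nonzero positive correlation that must be shown to contribute only an $O(1/n)$ excess relative to the product of the marginals. This was the step where~\cite{kunisky2022strong} invoked an additional concentration inequality; the simplification promised in Section~2.2 is that conditioning on $(X_i, Y_i)$ reduces the covariance to an expression involving a Gaussian orthant probability, and a direct calculation suffices. A secondary subtlety is to track constants carefully enough that the good-side bound and the bad-side bound are compatible under a single choice of $\delta_0$ uniformly in $\sigma_0 \in (0, 1/2)$, particularly in the dimension dependence hidden in the MGF product of \prettyref{lem:MGF}.
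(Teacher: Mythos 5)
Your overall route is the same as the paper's: reduce to a sample from the posterior, split $\fS_n$ into good and bad permutations, control the good mass by a (truncated) first moment, and build exponentially many bad permutations from $\Omega(n)$ vertex-disjoint augmenting $2$-orbits extracted via a second-moment bound and Tur\'an's theorem. However, your good-side bound has a genuine gap. Integrating out $Z$ does \emph{not} turn $\Expect[e^{L(\pi)-L(\mathrm{Id})}]$ into a negative Gaussian MGF of the type $p(\Pi,I)$: writing $L(\pi)-L(\mathrm{Id})=\frac{1}{\sigma^2}\iprod{\Pi X-X}{X}+\frac{1}{\sigma}\iprod{\Pi X-X}{Z}$ and using $\iprod{\Pi X-X}{X}=-\frac12\fnorm{\Pi X-X}^2$, the noise MGF contributes $+\frac{1}{2\sigma^2}\fnorm{\Pi X-X}^2$, which exactly cancels the drift, so $\Expect[e^{L(\pi)-L(\mathrm{Id})}]=1$ for \emph{every} $\pi$ (this is \prettyref{eq:MGF1} in the paper). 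Consequently $\Expect[W(\mathcal{G})]=|\mathcal{G}|=e^{\Theta(\delta n\log n)}$, and your claim that $\Expect[W(\mathcal{G})-1]=O(1)$, hence $W(\mathcal{G})=O(1)$ by Markov, cannot hold. The paper gets around this precisely by truncation: plain Markov is used only for $\diff(\pi,\mathrm{Id})<\beta n/\log n$, while in the range $[\beta n/\log n,\delta n)$ one truncates on the event $L(\Pi)-L(\mathrm{Id})\le\tau(\ell)$ with $\tau(\ell)=\ell\log(16e^2n(2\sigma_0)^d/\ell)$, bounds $\Prob[\calE^c]$ by a Chernoff bound at parameter $t=1/2$ (which is what produces the negative MGF $\Expect[\exp(-\frac{1}{8\sigma^2}\fnorm{\Pi X-X}^2)]$ and lets the cycle/derangement machinery of \prettyref{lem:MGF} and \prettyref{eq:MGF_Cycle} enter), and pays $e^{\tau(\ell)/2}$ in the truncated moment. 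Even then the conclusion is only $W(\mathcal{G})\le e^{c(\delta)n}$ with $c(\delta)\to0$ as $\delta\to0$ (Lemma \ref{lmm:Pigood}), not $O(1)$; this is still enough because the bad mass is $e^{\Omega_{\delta_0}(n)}$ with rate determined by the matching density $\delta_0$, and one then chooses the good-set threshold $\delta$ small relative to $\delta_0$. Your write-up needs this truncation-plus-tilting step and the two-parameter bookkeeping to close.

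A secondary, fixable slip: with $\sigma=\sigma_0 n^{-1/d}$ the probability that a given pair forms an augmenting $2$-orbit is of order $(1+\sigma^{-2})^{-d/2}\asymp\sigma_0^d/n$, so the expected number of augmenting pairs is $\Theta(n)$, not $\Theta(n^2)$ (restricting to pairs with $\|X_i-X_j\|\approx n^{-1/d}$ already leaves only $\Theta(n)$ candidate pairs). The Chebyshev-plus-Tur\'an extraction of a linear-size disjoint family still goes through with this corrected count, exactly as in Lemma \ref{lmm:aug_cycle}, but the guarantee is a constant probability (the paper gets $1/2-c/n$), which is all the final averaging argument needs.
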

\prettyref{thm:impossibility} readily implies that for constant $d$, $\sigma=o(n^{-1/d})$ is necessary for achieving the almost perfect recovery, \ie, 
$\expect{\diff (\Pi^*, \hat \Pi )}=o(n)$.
To prove~\prettyref{thm:impossibility}, we follow the program in~\cite{DWXY21} of analyzing the posterior distribution. 
The likelihood function of $(X,Y)$ given $\Pi^*=\Pi$ is proportional to $\exp( - \frac{1}{2\sigma^2 } \fnorm{Y- \Pi X}^2 )$.
Therefore, conditional on $(X,Y)$, the posterior distribution of $\Pi^*$ is a Gibbs measure,
given by 
$$
\mu_{X,Y}(\Pi) = \frac{1}{Z(X,Y)} \exp \left(  L(\Pi) \right), \quad \text{ where } L(\Pi) = \frac{1}{\sigma^2} \iprod{\Pi X}{Y},
$$
and $Z(X,Y)$ is the normalization factor. 

As observed in~\cite[Section 3.1]{DWXY21},  in order to prove the impossibility of almost perfect recovery, it suffices to consider the estimator $\tilde{\Pi}$ which
is sampled from the posterior distribution $\mu_{X,Y}(\Pi)$. To see this, given any estimator $\hat{\Pi} \equiv \hat{\Pi}(X,Y)$,
$(\hat{\Pi}, \Pi^*) $ and $(\hat{\Pi}, \tilde{\Pi})$ are equal in law, and hence
\[
\Expect[\diff(\tilde \Pi,\Pi^*)] \leq  \Expect[\diff(\tilde \Pi,\hat \Pi)]+  \Expect[\diff(\Pi^*,\hat \Pi)]= 2 \Expect[\diff(\Pi^*,\hat \Pi)],
\]
which shows that $\tilde \Pi$ is optimal within a factor of two.
Thus it suffices to bound $\Expect[\diff(\tilde \Pi,\Pi^*)] $ from below.

To this end, fix some $\delta$ to be specified later and 
define the sets of good and bad solutions respectively as 
\begin{align*}
\Pigood = & ~ \{ \Pi \in \fS_n: \diff(\Pi,\Pi^*) <  \delta n\} , \\
\Pibad = & ~ \{ \Pi \in \fS_n: \diff(\Pi,\Pi^*)  \geq  \delta n \}.
\end{align*}
By the definition of $\tilde \Pi$, we have 
\[
\Expect[\diff(\tilde \Pi,\Pi^*)] \geq \delta  n \cdot \Expect[\mu_{X,Y}(\Pibad)].
\]
Next we show two key lemmas, which bound the posterior mass of $\Pigood $ and $\Pigood$
from above and below, respectively.

\begin{lemma}
\label{lmm:Pigood}	
Assume $\sigma = \sigma_0 n^{-1/d}$ for any constant $\sigma_0 \in (0,1/2)$. 
For any constant $\delta$ such that $\delta \le 16(2\sigma_0)^d$, 
with probability at least $1- 4\delta n e^{- \delta n/\log n}$, 
	\begin{equation}
	\frac{\mu_{X,Y}(\Pigood)}{\mu_{X,Y}(\Pi^*)} \leq 2 \left( \frac{16 e^2  (2\sigma_0)^d }{\delta} \right)^{\delta n} .
	\label{eq:Pigood}
	\end{equation}
\end{lemma}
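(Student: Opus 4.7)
The starting identity is
\[
\frac{\mu_{X,Y}(\Pigood)}{\mu_{X,Y}(\Pi^*)} \;=\; \sum_{\Pi \in \Pigood \setminus \{\Pi^*\}} \exp(\tau_\Pi), \qquad \tau_\Pi := L(\Pi) - L(\Pi^*).
\]
By relabelling I take $\Pi^* = I_n$, so that $\tau_\Pi = -V_\Pi/(2\sigma^2) + W_\Pi/\sigma$ where $V_\Pi := \fnorm{(I_n - \Pi)X}^2$ and $W_\Pi := \iprod{(\Pi - I_n)X}{Z}$. Conditionally on $X$, $\tau_\Pi$ is Gaussian with mean $-V_\Pi/(2\sigma^2)$ and variance $V_\Pi/\sigma^2$, whence for every $\alpha$,
\[
\Expect[e^{\alpha \tau_\Pi}\mid X] \;=\; \exp\bigl(-\alpha(1-\alpha)V_\Pi/(2\sigma^2)\bigr).
\]

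A direct first moment on $S := \sum_\Pi e^{\tau_\Pi}$ is useless, since $\Expect e^{\tau_\Pi}=1$ forces $\Expect S = |\Pigood|-1$, which dwarfs $C_0$. My plan is instead a truncated first moment applied to $S^{1/2}$: by sub-additivity of $x\mapsto x^{1/2}$ on $\reals_+$,
\[
S^{1/2} \;\le\; \sum_{\Pi \in \Pigood \setminus\{\Pi^*\}} e^{\tau_\Pi/2},
\]
and taking expectations using the Gaussian MGF above with $\alpha=1/2$ yields
\[
\Expect[S^{1/2}] \;\le\; \sum_{\Pi \in \Pigood \setminus\{\Pi^*\}} \Expect\!\bigl[\exp(-V_\Pi/(8\sigma^2))\bigr].
\]
This is a partition function of exactly the form studied in Lemma~\ref{lem:MGF} (with the orthogonal factor $Q=I_d$), and the whole analysis of the positive results can now be imported.

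Next I would execute the cycle-decomposition bound: by Lemma~\ref{lem:MGF}, for a permutation $\pi$ with cycle type $\{n_k\}$,
\[
\Expect[\exp(-V_\pi/(8\sigma^2))] \;=\; \prod_{k} a_k(I_d)^{n_k}, \qquad a_k(I_d) \;\lesssim\; (2\sigma)^{(k-1)d}/k^d
\]
for small $\sigma$. Grouping $\pi \in \Pigood\setminus\{\mathrm{Id}\}$ by its Hamming distance $j = \diff(\pi,\mathrm{Id}) \in \{2,\dots,\delta n - 1\}$ and summing over derangements of the $j$ displaced indices via the rising-factorial identity \prettyref{eq:MGF-randomcycle} used in Lemma~\ref{lem:Estimate_MGF} (with $L = (2\sigma)^{-d} = n/(2\sigma_0)^d$), the dominant contribution under the hypothesis $\delta \le 16(2\sigma_0)^d$ comes from the configurations built out of disjoint $2$-cycles, which contribute a Poisson-like series $\sum_{\frakc' \le \delta n/2} (n\sigma_0^d/2)^{\frakc'}/\frakc'!$. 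Truncated at $\frakc' = \delta n/2$, Stirling gives a bound of the form $\bigl(O((2\sigma_0)^d)/\delta\bigr)^{\delta n/2}$, so that
\[
\Expect[S^{1/2}] \;\lesssim\; \Bigl(O((2\sigma_0)^d)/\delta\Bigr)^{\delta n/2}.
\]
Markov's inequality then gives $\Prob(S > C_0) \le \Expect[S^{1/2}]/\sqrt{C_0}$, and comparing against $\sqrt{C_0}=\sqrt 2\,(16e^2(2\sigma_0)^d/\delta)^{\delta n/2}$ yields a ratio bounded by $(1/O(2^d))^{\delta n/2}$. For all sufficiently large $n$ this is $\le 4\delta n\exp(-\delta n/\log n)$, establishing the claim.

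\paragraph{Main obstacle.} The delicate step is the combinatorial accounting in bounding the partition function. The crude bound $a_k(I_d) \le (2\sigma)^{(k-1)d}$ that discards the $1/k^d$ factor is not sharp enough: the dominant many-disjoint-$2$-cycle configurations would then give a contribution $\exp(\Theta(n\sigma_0^d))$, which is incompatible with the $(2\sigma_0)^d/\delta$ scaling claimed in \prettyref{eq:Pigood}. Keeping the $1/k^d$ factor---exactly as in the proof of the positive results---is what produces the correct Poisson series in $\frakc'$, whose truncation at $\frakc' = \delta n/2$ under $\delta \le 16(2\sigma_0)^d$ matches the target via Stirling. The rest of the argument is a routine application of the half-power Markov trick and the MGF identities already developed in Lemmas~\ref{lem:MGF} and~\ref{lem:Estimate_MGF}.
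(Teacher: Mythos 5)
Your argument is correct, but it takes a genuinely different route from the paper's. The paper splits the good permutations into those within Hamming distance $\delta n/\log n$ of $\Pi^*$ (handled by the plain first moment $\Expect[e^{L(\Pi)-L(\Pi^*)}]=1$ plus Markov) and an intermediate range handled by a truncated first moment: a Chernoff bound at parameter $1/2$ shows that, with high probability, no $L(\Pi)-L(\Pi^*)$ exceeds a distance-dependent threshold $\tau(\ell)$, and the truncated exponential moment is then reduced to $\Expect[\exp(-\fnorm{\Pi X-\Pi^* X}^2/(8\sigma^2))]$. Your half-moment trick --- bounding $S^{1/2}\le\sum_\Pi e^{(L(\Pi)-L(\Pi^*))/2}$ and applying Markov to $S^{1/2}$ --- reaches the same MGF in one step, dispenses with both the range splitting and the truncation event, and even gives a failure probability $e^{-\Omega(\delta n)}$, slightly stronger than the stated $4\delta n e^{-\delta n/\log n}$; the cycle-type/derangement count you invoke is exactly the paper's \prettyref{lmm:MGF_approximate} via \prettyref{eq:MGF_Cycle}. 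One correction: your ``main obstacle'' is not an obstacle. The crude bound \prettyref{eq:ak_Estimate}, without the $k^{-d}$ refinement, already yields the per-distance bound $(16n(2\sigma_0)^d/\ell)^{\ell/2}$; the restriction to Hamming distance below $\delta n$ together with $\delta\le 16(2\sigma_0)^d$ controls the sum, and even in the boundary regime $\delta\asymp(2\sigma_0)^d$, where a contribution of size $e^{\Theta(n(2\sigma_0)^d)}$ does appear (there the maximizing $\ell$ is interior rather than $\ell=\delta n$, a case your Stirling step should mention), it is still dominated by $(16e^2(2\sigma_0)^d/\delta)^{\delta n}$ thanks to the extra $e^2$ in the base. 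Keeping the $k^{-d}$ factor, which does follow from the display preceding \prettyref{eq:ak_Estimate}, is harmless but unnecessary.
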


\begin{lemma}
\label{lmm:Pibad}
Assume $\sigma =\sigma_0 n^{-1/d}$ for some constant $\sigma_0$. 
There exist constants $\delta_0(\sigma_0,d)$ and $c(\sigma_0, d)$ that only depend on $\sigma_0,d$
such that for all $\delta \le \delta_0$ and sufficiently large $n$, with probability at least $1/2- c/n$, 
		\begin{equation}
	\frac{\mu_{X,Y}(\Pibad)}{\mu_{X,Y}(\Pi^*)} \geq e^{\delta_0 n/2}.
	\label{eq:Pibad}
	\end{equation}
\end{lemma}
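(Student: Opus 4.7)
My plan follows the posterior-measure strategy of \prettyref{sec:negative-sketch}: lower-bound $\mu_{X,Y}(\Pibad)$ by producing exponentially many bad permutations $\pi$ with $L(\pi)\ge L(\mathrm{Id})$, each arising as the product of disjoint transpositions forming augmenting $2$-orbits over $\pi^*=\mathrm{Id}$ (WLOG). For $i<j$, set $W_{ij}=\indc{\Delta(\{i,j\})\ge 0}$, where by~\prettyref{eq:Delta_O_def}
$$\Delta(\{i,j\})=\iprod{X_j-X_i}{Y_i-Y_j}=-\|X_i-X_j\|^2+\sigma\iprod{X_j-X_i}{Z_i-Z_j},$$
and let $M=\sum_{i<j}W_{ij}$.

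First moment: conditioning on $X_j-X_i\sim\calN(0,2I_d)$ reduces $\Prob(W_{12}=1)$ to $\Prob(\calN(0,1)\ge\chi_d/\sigma)$ with $\chi_d$ the standard chi law; integrating against the chi density and substituting $r=\sigma s$ yields $p\triangleq\Prob(W_{12}=1)=\kappa(\sigma_0,d)/n\cdot(1+o(1))$ for an explicit positive constant $\kappa$, so $\Expect[M]=\Theta(n)$. Second moment: $\{W_{vj}\}_{j\ne v}$ is conditionally independent given $(X_v,Z_v)$, so with $q(x,z)=\Prob(W_{12}=1\mid X_1=x,Z_1=z)$ and $D_v=\sum_{j\ne v}W_{vj}$,
$$\Expect[D_v^2]=(n-1)p+(n-1)(n-2)\Expect[q(X_v,Z_v)^2].$$
A regional analysis splitting over $\{\|X_v\|\vee\|Z_v\|\le A\}$ (on which $q\asymp p$) versus its exponentially-small-measure complement gives $\Expect[q^2]\le Cp^2$, hence $\Expect[D_v^2]=O(1)$ and $\var(M)=O(n)$ after summing (using independence of $W_{ij},W_{kl}$ for disjoint pairs).

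Form the conflict graph $H$ on augmenting $2$-orbits, with edges between orbits sharing a vertex: $|V(H)|=M$ and $\Expect[|E(H)|]\le\frac12\sum_v\Expect[D_v^2]=O(n)$. By Chebyshev on $M$ and Markov on $|E(H)|$, the joint event $\{M\ge\Expect[M]/2,\ |E(H)|\le An\}$ holds with probability at least $1/2-O(1/n)$ once $A$ is a sufficiently large absolute constant. On this event, Caro--Wei yields an independent set of size $\ell\ge M^2/(M+2|E(H)|)\ge c_1(\sigma_0,d)\,n$; label this vertex-disjoint subcollection $T$ and set $\delta_0=c_1/4$. For every subset $S\subseteq T$, the commuting transpositions define $\pi_S\in\fS_n$ with $\diff(\pi_S,\mathrm{Id})=2|S|$; combining~\prettyref{eq:L_diff_decomp} with $\Delta(O)\ge 0$ for $O\in T$ gives $L(\pi_S)\ge L(\mathrm{Id})$. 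Restricting to $|S|\ge\ell/2\ge 2\delta_0 n$ produces $\ge 2^{\ell-1}$ permutations in $\Pibad$ (for any $\delta\le\delta_0$), yielding
$$\frac{\mu_{X,Y}(\Pibad)}{\mu_{X,Y}(\mathrm{Id})}\ge 2^{\ell-1}\ge e^{\delta_0 n/2}$$
after adjusting $\delta_0$ downward. The main obstacle is the bound $\Expect[q^2]=O(p^2)$ in the second-moment step: crude tail estimates are too loose, and one must verify by a careful regional Gaussian-tail calculation that atypically small $\|X_v\|$ or large $\|Z_v\|$ do not blow up the conditional augmenting probability; the combinatorial wrap-up via Caro--Wei and the subset construction is then routine.
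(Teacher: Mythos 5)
Your overall route is the same as the paper's: the paper proves exactly this statement by (a) showing there are $\Omega(n)$ vertex-disjoint augmenting $2$-orbits with probability $1/2-c/n$ via a first/second-moment computation on the indicators $I_{ij}$ plus Tur\'an's theorem applied to the conflict graph, and (b) counting $\binom{\delta_0 n}{\delta_0 n/2}\ge 2^{\delta_0 n/2}$ unions of half of these orbits, each giving a permutation at distance $\ge \delta_0 n$ with $L(\Pi)\ge L(\Pi^*)$ by \prettyref{eq:L_diff_decomp}. Your Caro--Wei extraction and all-subsets counting are cosmetic variants of (b), and your conditional-independence formulation of the second moment is equivalent to the paper's covariance decomposition. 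The only substantive difference is that the paper imports the two probability estimates ($p\gtrsim_d \sigma^d$ and $\Expect[I_{ij}I_{ik}]\lesssim_d \sigma^{2d}$) from \cite[Prop.~4.3, 4.5]{kunisky2022strong}, whereas you propose to derive them from scratch; your first-moment computation (reduction to $\Prob(\calN(0,1)\ge \chi_d/\sigma)$ and the substitution $r=\sigma s$) is correct.

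The one step that does not work as written is your bound $\Expect[q(X_v,Z_v)^2]\le Cp^2$ via the split over $\{\|X_v\|\vee\|Z_v\|\le A\}$ with $A$ a constant. On the complement you only have the trivial bound $q\le 1$, so that region contributes up to its probability $e^{-cA^2}$, which is a constant and vastly exceeds $p^2=\Theta(n^{-2})$; "exponentially small in $A^2$" is not small enough here. You need either a truncation level growing like $\sqrt{\log n}$ (which costs only polylogarithmic factors and still gives $\var(M)=o(n^2)$, enough for Chebyshev), or, cleaner, the pointwise bound $q(x,z)\le C_d\,\sigma^d(1+\|z\|)^d + \text{(Gaussian tail)}$, which follows because the dominant contribution comes from $X_2$ landing in a ball of radius $O(\sigma(1+\|z\|))$ around $x$ and the $\calN(0,I_d)$ density is uniformly bounded; integrating against the polynomial moments of $\|Z_v\|$ then gives $\Expect[q^2]=O_d(\sigma^{2d})=O(p^2)$. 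Note also that your diagnosis of the danger is slightly off: atypically small $\|X_v\|$ is harmless since the bound on $q$ is uniform in $x$; the relevant variable is large $\|Z_v\|$, and polynomial moments suffice to control it. With that sub-step repaired, your argument matches the paper's.
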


Given the above two lemmas, Theorem \ref{thm:impossibility} readily follows. 
Indeed, combining \prettyref{lmm:Pigood} and \prettyref{lmm:Pibad} and choosing 
$\delta$ such that $\delta \log (16 e^2  (2\sigma_0)^d / \delta) = \delta_0/4$
we get 
 $\mu_{X,Y}(\Pibad) \geq \frac{e^{\delta_0 n/4 }}{2+ e^{\delta_0 n/4 } }$ with probability at least $1/2- c / n-4\delta n e^{- \delta n/\log n}$, 
 which shows that $\Expect[\diff(\tilde \Pi,\Pi^*)] \gtrsim  \delta n$ as desired.

\subsection{Upper bounding the posterior mass of good permutations}
In this section, we prove \prettyref{lmm:Pigood} by a truncated first moment calculation. We need the following key auxiliary result. 

\begin{lemma}\label{lmm:MGF_approximate}
Assume that $n (2\sigma)^d \le 1$. Then for any $\ell \in [0,n]$, 
$$
\sum_{\Pi: \diff (\Pi, \Pi^*) = \ell} \expect{ \exp\left( - \frac{1}{8\sigma^2} \fnorm{\Pi X - \Pi^*X}^2 \ \right)  } 
\le \left( \frac{16 n^2 (2\sigma)^d }{\ell} \right)^{\ell/2}. 
$$
\end{lemma}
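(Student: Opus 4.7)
\textbf{Proof plan for Lemma \ref{lmm:MGF_approximate}.}

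The plan is to reduce the statement directly to the machinery already developed in Lemma \ref{lem:MGF} and the truncated cycle generating function argument used in the proof of Lemma \ref{lem:Estimate_MGF}(ii). First, since the rows of $X$ are i.i.d.\ $\calN(0,I_d)$, the distribution of $\|\Pi X - \Pi^* X\|_F^2 = \|(\Pi(\Pi^*)^{-1} - I)\Pi^*X\|_F^2$ depends on $\Pi$ only through $\tilde\Pi \triangleq \Pi(\Pi^*)^{-1}$, and has the same law as $\|X - \tilde\Pi X\|_F^2$. Thus, without loss of generality we may set $\Pi^* = I$ and bound
\begin{equation*}
S_\ell \;\triangleq\; \sum_{\diff(\Pi,I)=\ell}\, \Expect\exp\Bigl(-\tfrac{1}{8\sigma^2}\|X-\Pi X\|_F^2\Bigr).
\end{equation*}

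Each summand is precisely the quantity $p(\Pi,I)$ from Lemma \ref{lem:MGF} but with $\sigma$ replaced by $\sigma/2$ (so that the prefactor $1/(32\cdot(\sigma/2)^2)=1/(8\sigma^2)$ matches). Applying \prettyref{eq:MGF_Compute} and \prettyref{eq:ak_Estimate} with $Q=I$ and with $\sigma$ replaced by $\sigma/2$ gives $a_1(I)=1$ and $a_k(I)\le (2\sigma)^{(k-1)d}$ for $k\ge 2$, hence
\begin{equation*}
p(\Pi,I) \;\le\; \prod_{k\ge 2} (2\sigma)^{(k-1)d\, n_k} \;=\; (2\sigma)^{d(\ell-\frakc(\tilde\pi))},
\end{equation*}
where $\tilde\pi$ denotes the restriction of the permutation associated with $\Pi$ to its $\ell$ non-fixed points (a derangement on $[\ell]$), and $\frakc(\tilde\pi)$ is its number of cycles. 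Parametrizing $\Pi$ by first choosing the support of non-fixed points and then a derangement on it,
\begin{equation*}
S_\ell \;\le\; \binom{n}{\ell}\,(2\sigma)^{d\ell}\; \sum_{\tilde\pi \text{ derangement of }[\ell]}\, L^{\frakc(\tilde\pi)},
\qquad L\triangleq (2\sigma)^{-d}.
\end{equation*}

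For the inner sum, I will invoke the generating function identity $\Expect_\tau[L^{\frakc(\tau)}]=\binom{L+\ell-1}{\ell}$ for uniform $\tau\in S_\ell$ (\prettyref{eq:MGF-randomcycle}), combined with the truncation trick used in the proof of Lemma \ref{lem:Estimate_MGF}(ii): since a derangement has $\frakc(\tau)\le \ell/2$, writing $L^{\frakc(\tau)}\le L^{\alpha\frakc(\tau)+(1-\alpha)\ell/2}$ and optimizing in $\alpha=\log\ell/\log L$ yields
\begin{equation*}
\sum_{\tilde\pi \text{ derangement of }[\ell]} L^{\frakc(\tilde\pi)}
\;=\;\ell!\,\Expect_\tau[L^{\frakc(\tau)}\mathbf{1}\{\tau \text{ derangement}\}]
\;\le\; \ell!\,\Bigl(\tfrac{16L}{\ell}\Bigr)^{\ell/2},
\end{equation*}
which is valid under the assumption $n(2\sigma)^d\le 1$ (\ie, $L\ge n\ge \ell$). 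Plugging this in and using $\binom{n}{\ell}\ell!\le n^\ell$ together with $(2\sigma)^{d\ell}\cdot L^{\ell/2}=(2\sigma)^{d\ell/2}$ produces
\begin{equation*}
S_\ell \;\le\; n^\ell\,(2\sigma)^{d\ell/2}\,\Bigl(\tfrac{16}{\ell}\Bigr)^{\ell/2} \;=\; \Bigl(\tfrac{16 n^2 (2\sigma)^d}{\ell}\Bigr)^{\ell/2},
\end{equation*}
which is the desired bound. The only genuinely nontrivial ingredient is the truncated derangement cycle sum, but it is essentially identical to \prettyref{eq:MGF_Cycle}; beyond that the argument is bookkeeping via cycle decomposition.
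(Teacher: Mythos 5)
Your proposal is correct and follows essentially the same route as the paper: bound each summand via the cycle-decomposition estimate \prettyref{eq:ak_Estimate} of Lemma \ref{lem:MGF} applied with $\sigma$ replaced by $\sigma/2$ (yielding the factor $(2\sigma)^{d(\ell-\frakc(\tilde\pi))}$), then count permutations at Hamming distance $\ell$ and control the derangement cycle sum by the truncated generating-function bound \prettyref{eq:MGF_Cycle}, which is valid since $L=(2\sigma)^{-d}\ge n\ge\ell$ under the stated assumption. The bookkeeping $\binom{n}{\ell}\ell!\le n^\ell$ and $(2\sigma)^{d\ell}L^{\ell/2}=(2\sigma)^{d\ell/2}$ matches the paper's computation exactly.
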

\begin{proof}
It follows from~\prettyref{eq:ak_Estimate} in  \prettyref{lem:MGF} that 
$$
\expect{ \exp\left( - \frac{1}{8\sigma^2} \fnorm{\Pi X - \Pi^*X}^2 \ \right)  }  \le 
\prod_{k=1}^n \left[\left(2\sigma \right)^{k-1}\right]^{dn_k}
\le \left( 2\sigma \right)^{d (\ell- \frakc(\tilde{\pi}) )},
$$
where $\ell=n-n_1$ is the number of non-fixed points, $\tilde{\pi}$ is the restriction of the permutation $\pi$ on its non-fixed points,
and $\frakc(\tilde{\pi})$ denotes the number of cycles of $\tilde{\pi}$. 
It follows that 
\begin{align*}
\sum_{\Pi: \diff (\Pi, \Pi^*) = \ell} \expect{ \exp\left( - \frac{1}{8\sigma^2} \fnorm{\Pi X - \Pi^*X}^2 \ \right)  }
&\le \binom{n}{\ell}\frac{ \ell ! }{ L^\ell }  \expect_{\tau} \qth{ L^{\frakc(\tau)} \mathbbm{1}_{\{\tau \ \mathrm{is \ a \ derangement}\}} } \\
& \le \left(\frac{n}{L}\right)^{\ell} \expect_{\tau} \qth{ L^{\frakc(\tau)} \mathbbm{1}_{\{\tau \ \mathrm{is \ a \ derangement}\}} } \\
& \le  \left( \frac{16n^2 }{\ell L} \right)^{\ell/2},
\end{align*}
where $L=(2\sigma)^{-d}$, the expectation $ \expect_{\tau} $ is taken for a uniformly random permutation $ \tau \in S_\ell $,
and the last inequality follows from~\prettyref{eq:MGF_Cycle}.
\end{proof}

\begin{proof}[Proof of~\prettyref{lmm:Pigood}]
Note that 
$$
\frac{\mu_{X,Y}(\Pigood)}{\mu_{X,Y}(\Pi^*)} = \sum_{\Pi \in \Pigood} e^{L(\Pi)-L(\Pi^*)} = R_1 + R_2, 
$$
where 
\begin{align*}
R_1 \triangleq   & ~\sum_{\Pi: \diff(\Pi,\Pi^*) < \beta n/\log n }  e^{L(\Pi)-L(\Pi^*)} \\
R_2 \triangleq   & ~ \sum_{\Pi: \frac{\beta n}{\log n} \leq \diff(\Pi,\Pi^*) < \delta n  } e^{L(\Pi)-L(\Pi^*)}
\end{align*}
for some $\beta$ to be specified.
Next we bound $R_1$ and $R_2$ separately.


First, the number of permutations $\Pi$ such that $ \Pi^{-1} \circ \Pi^*$ has $\ell$ non-fixed points is 
\begin{equation}
|\{\Pi \in \fS_n: \diff(\Pi,\Pi^*)=\ell\}| = !\ell \cdot \binom{n}{\ell},
\label{eq:mcount}
\end{equation}
where $!\ell = \qth{\frac{\ell!}{e} }$.
Thus
\begin{equation}
\frac{1}{2e} n(n-1)\cdots (n-\ell+1) \leq 
|\{ \Pi \in \fS_n: \diff(\Pi,\Pi^*)=\ell\}| \leq \frac{2}{e} n(n-1)\cdots (n-\ell+1).
\label{eq:mcount2}
\end{equation}

Furthermore, for any $\Pi$, 
\begin{align}
\expect{ e^{L(\Pi)-L(\Pi^*)} } &= \expect{\exp\left(  \frac{1}{\sigma^2} \iprod{\Pi X - \Pi^*X }{Y} \right)} \nonumber \\
& = \expect{ \exp\left(  \frac{1}{\sigma^2} \iprod{\Pi X- \Pi^*X}{\Pi^*X}  + \frac{1}{2\sigma^2} \fnorm{\Pi X - \Pi^*X}^2 \right) } \nonumber \\
&= 1, \label{eq:MGF1}
\end{align}
where the first equality holds due to $Y=\Pi^*X+ \sigma^2 Z$ and $\expect{\exp( \iprod{A}{Z})} = \exp(\fnorm{A}^2/2)$ and the second equality follows from 
$\iprod{\Pi X- \Pi^*X}{\Pi^*X} = - \frac{1}{2} \fnorm{\Pi X - \Pi^*X}^2$. 

To bound $R_1$, using \prettyref{eq:mcount2} and \prettyref{eq:MGF1} we have
\begin{align*}
\expect{R_1} 
& = \sum_{\diff(\Pi,\Pi^*) < \frac{\beta n}{\log n} } \expect{ e^{ L(\Pi) - L(\Pi^*) } }  
\leq \sum_{\ell < \frac{\beta n}{\log n} }  \frac{2}{e} n^\ell \leq \frac{2\beta n}{e \log n} \exp(\beta n).
\end{align*}
By Markov's inequality, 
\begin{equation}
\prob{R_1 \geq e^{2 \beta n}} \leq \frac{2n}{e} \exp(-\beta n).
\label{eq:R1}
\end{equation}

To bound $R_2$, the calculation above shows that directly applying the Markov inequality is too crude since $\Expect[R_2]= e^{\Theta(n \log n)}$.
Note that although $L(\Pi)-L(\Pi^*)$ is negatively biased, 
when $L(\Pi)-L(\Pi^*)$ is atypically large it results in an excessive contribution to the exponential moments. Thus we truncate on the following event:
$$
\calE \triangleq  \bigcap_{ \Pi:  \frac{\beta n}{\log n}  \leq \diff (\Pi, \Pi^*) < \delta n}  \left\{ L(\Pi)-L(\Pi^*) \le  \tau\left( \diff(\Pi,\Pi^*) \right) \right\}
$$
for some threshold $\tau(\ell)$ to be chosen.

Then for any $c'>0$, 
\begin{align}
& \prob{ R_2  \ge e^{c' n} }  \nonumber \\
& \le \prob{\calE^c} + \prob{ \{R_2  \ge e^{c' n}\}   \cap \calE } \nonumber \\
& \le \prob{\calE^c} +\prob{ \sum_{\frac{\beta n}{\log n} \leq \diff(\Pi,\Pi^*)  <  \delta n } e^{  L(\Pi)-L(\Pi^*) }  \indc{ L(\Pi)-L(\Pi^*) \leq \tau\left( \diff(\Pi,\Pi^*) \right) }  \ge e^{c' n} } \nonumber  \\
& \le  \prob{\calE^c} + e^{-c' n} \sum_{\frac{\beta n}{\log n} \leq \diff(\Pi,\Pi^*)  <  \delta n } \expect{e^{ L(\Pi)-L(\Pi^*) }  \indc{ L(\Pi)-L(\Pi^*) \leq \tau\left( \diff(\Pi,\Pi^*) \right) } }.  \label{eq:R2}
\end{align}

To bound the first term, note that for any $t>0$,
\begin{multline*}
\prob{ L(\Pi) - L(\Pi^*) \geq \tau } \\ \le e^{- t \tau } \expect{ \exp\left(  \frac{t}{\sigma^2} \iprod{\Pi X - \Pi^*X }{Y} \right) }
 =e^{- t \tau}  \expect{ \exp\left(  \frac{t^2 - t }{2\sigma^2} \fnorm{\Pi X - \Pi^*X}^2 \ \right)  }.
\end{multline*}
By choosing $t=1/2$, we get that 
\begin{align*}
\prob{ L(\Pi) - L(\Pi^*) \geq \tau } \le e^{-  \tau/2}  \expect{ \exp\left( - \frac{1}{8\sigma^2} \fnorm{\Pi X - \Pi^*X}^2 \ \right)  }.
\end{align*}
Recall from~\prettyref{lmm:MGF_approximate}, we have that
$$
\sum_{\Pi: \diff (\Pi, \Pi^*) = \ell} \expect{ \exp\left( - \frac{1}{8\sigma^2} \fnorm{\Pi X - \Pi^*X}^2 \ \right)  }
\le  \left( \frac{16 n^2 (2\sigma)^d }{\ell} \right)^{\ell/2} 
= \left( \frac{16 n (2\sigma_0)^d }{\ell} \right)^{\ell/2}.
$$
Therefore, it follows from a union bound  that
\begin{align}
\prob{\calE^c} & = \sum_{ \frac{\beta n}{\log n} \le \diff(\Pi,\Pi^*) <\delta n }  \prob{L(\Pi) - L(\Pi^*) \ge \tau\left( \diff(\Pi,\Pi^*) \right)  }  \nonumber   \\
& \le  \sum_{  \frac{\beta n}{\log n}  \le \ell < \delta n}  
e^{-  \tau(\ell)/2 }   \left( \frac{16 n (2\sigma_0)^d }{\ell} \right)^{\ell/2} \nonumber    \\
& =\sum_{  \frac{\beta n}{\log n}  \le \ell < \delta n}   e^{- \ell }  
\le  \delta n e^{ -\frac{\beta n}{\log n}  }, \label{eq:R2a}
\end{align}
where the last equality holds by choosing $\tau(\ell)= \ell \log (16 e^2 n (2\sigma_0)^d /\ell )$.


For the second term in \prettyref{eq:R2a}, we bound the truncated MGF as follows:
\begin{align*}
& \sum_{\Pi: \diff (\Pi, \Pi^*) = \ell }  \expect{ e^{ L(\Pi) - L(\Pi^*)  }  \indc{L(\Pi) - L(\Pi^*)  \le \tau\left( \diff(\Pi,\Pi^*) \right)  } } \\
&\le  \sum_{\Pi: \diff (\Pi, \Pi^*) = \ell} \expect{\exp \left( \frac{1}{2} \left( L(\Pi) - L(\Pi^*) + \tau(\ell ) \right) \right) }  \\
& \le     \sum_{\Pi: \diff (\Pi, \Pi^*) = \ell}  \expect{ \exp\left( - \frac{1}{8\sigma^2} \fnorm{\Pi X - \Pi^*X}^2 \ \right)  } e^{\tau(\ell) /2} \\
& \le \left( \frac{16 n (2\sigma_0)^d }{\ell} \right)^{\ell/2} e^{\tau(\ell) /2} \\
& \le  \left( \frac{16 e n (2\sigma_0)^d }{\ell} \right)^{\ell}.
\end{align*}
It follows that 
\begin{align*}
\sum_{\frac{\beta n}{\log n} \leq \diff(\Pi,\Pi^*)  <  \delta n } \expect{e^{  L(\Pi) - L(\Pi^*)  }  \indc{ L(\Pi) - L(\Pi^*)  \le  r \diff(\Pi,\Pi^*) } } 
& \le  \sum_{  \frac{\beta n}{\log n}  \le \ell < \delta n }    \left( \frac{16 e n (2\sigma_0)^d }{\ell} \right)^{\ell} \\
& \le \delta n   \left( \frac{16 e  (2\sigma_0)^d }{\delta} \right)^{\delta n},
\end{align*}
where the last inequality holds for all $\delta \le 16 (2\sigma_0)^d$. 
Choosing  $c' = \delta  \log (  16 e^2  (2\sigma_0)^d/ \delta)$, we get that 
\begin{equation}
 e^{-c' n}\sum_{\frac{\beta n}{\log n} \leq \diff(\Pi,\Pi^*)  <  \delta n } \expect{e^{ L(\Pi) - L(\Pi^*)  }  \indc{ L(\Pi) - L(\Pi^*)  \le  r \diff(\Pi,\Pi^*)} }  
 \le \delta n e^{-\delta n} 
\label{eq:R2b}
\end{equation}
Substituting \prettyref{eq:R2a} and \prettyref{eq:R2b} into \prettyref{eq:R2}, we get
$$
\prob{R_2 \ge  \left( \frac{16 e^2  (2\sigma_0)^d }{\delta} \right)^{\delta n}  } \le 2 \delta n e^{- \beta n/\log n}.
$$
Combining this with \prettyref{eq:R1} and upon choosing $\beta= \delta $, we have 
$$
\prob{R_1+R_2 \geq  2 \left( \frac{16 e^2  (2\sigma_0)^d }{\delta} \right)^{\delta n}  } \leq 4 \delta n e^{-\delta n/\log n}, 
$$ 
concluding the proof.
\end{proof}

\subsection{Lower bounding the posterior mass of bad permutations}

In this section, we prove  \prettyref{lmm:Pibad}. We aim to construct exponentially many bad permutations $\pi$ whose log likelihood $L(\pi)$ is no smaller than $L(\pi^*)$. It turns out that $L(\pi)-L(\pi^*)$ can be decomposed according to the orbit
decomposition of $(\pi^*)^{-1}\circ \pi$ as per~\prettyref{eq:L_diff_decomp}. 
Thus, following~\cite{DWXY21}, we look for vertex-disjoint orbits $O$ whose total lengths add up to $\Omega(n)$ and each of them is \emph{augmenting} in the sense that $\Delta(O) \ge 0$. 

 In the planted matching model with independent weights~\cite{DWXY21}, a great challenge lies in the fact that short augmenting orbits (even after taking their disjoint unions) are insufficient to meet the $\Omega(n)$ total length requirement. As a result, one has to search for long augmenting orbits of length $\Omega(n)$. However, due to the excessive correlations among long augmenting orbits, the second-moment calculation fundamentally fails. To overcome this challenge, \cite{DWXY21} invents a two-stage finding scheme which first finds many but short augmenting paths and then patches them together to form a long augmenting orbit using the so-called sprinkling idea. Fortunately, in our low-dimensional case of $d=\Theta(1)$, as also observed in~\cite{kunisky2022strong},  it suffices to look for augmenting $2$-orbits and take their disjoint unions. 
 More precisely, 
 the following lemma shows that there are $\Omega(n)$ vertex-disjoint augmenting $2$-orbits, from which we can easily extract exponentially many different unions of total length $\Omega(n)$. 
In contrast, to prove the failure of the MLE for almost perfect recovery in~\cite{kunisky2022strong}, a single union of $\Omega(n)$ vertex-disjoint augmenting $2$-orbits is sufficient.






\begin{lemma}\label{lmm:aug_cycle}
If $\sigma=\sigma_0 n^{-1/d}$, then 
there exist constants  $c(\sigma_0,d)$, $\delta_0(\sigma_0,d)$, and $n_0(\sigma_0,d)$ that only depend on $\sigma_0$ and $d$
such that for all $n \ge n_0$, with probability at least $1/2- c/n$, 
there are at least $\delta_0 n$ many vertex-disjoint augmenting $2$-orbits. 
\end{lemma}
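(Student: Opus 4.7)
The plan is to run a second-moment argument on the count of augmenting $2$-orbits and then extract an $\Omega(n)$-size vertex-disjoint subcollection via Tur\'an's bound. Assume without loss of generality that $\pi^*=\mathrm{Id}$, so $Y_i=X_i+\sigma Z_i$. For a $2$-orbit $O=(i,j)$, the orbit-increment in~\prettyref{eq:Delta_O_def} simplifies to
\[
\Delta(O)=\iprod{X_j-X_i}{Y_i-Y_j}=-\|X_j-X_i\|^2+\sigma\iprod{X_j-X_i}{Z_i-Z_j},
\]
so that with $U\triangleq X_j-X_i\sim\calN(0,2I_d)$ and, conditional on $U$, $\iprod{U}{Z_i-Z_j}\sim\calN(0,2\|U\|^2)$, the probability that $(i,j)$ is augmenting is
\[
p\;\triangleq\;\prob{\Delta(O)\ge 0}\;=\;\expect\qth{\bar\Phi\pth{\|U\|/(\sigma\sqrt{2})}},
\]
with $\bar\Phi$ the standard Gaussian tail. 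Restricting this expectation to the small-ball event $\{\|U\|\le\sigma\}$, whose probability is $\ge c_d\sigma^d$ by the chi-squared density at the origin, and using $\bar\Phi(1/\sqrt{2})>0$, gives $p\ge c_d'\sigma^d=c_d'\sigma_0^d/n$. Hence the total count $N$ of augmenting $2$-orbits satisfies $\expect[N]=\binom{n}{2}p=\Theta(n)$ with constants depending on $\sigma_0$ and $d$.

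Next comes the variance bound. Two $2$-orbits $(i,j)$ and $(k,l)$ with disjoint vertex sets depend on disjoint random variables and are therefore independent, so only the vertex-sharing configurations contribute. Conditioning on $(X_i,Z_i)$, the events that $(i,j)$ and $(i,k)$ are augmenting become independent, so their joint probability equals $\expect[q(X_i,Z_i)^2]$ with $q(x,z)\triangleq\prob{(i,j)\text{ augmenting}\mid X_i=x,Z_i=z}$. A routine Gaussian computation localizes $q$ to $X_j$ near $X_i$, yielding $q(x,z)\le C_d e^{-\|x\|^2/2}\sigma^d g(z)$ for some $g$ with $\expect[g(Z_i)^2]=O(1)$; integrating yields $\expect[q^2]\le C_d'\sigma^{2d}$, which is $O(p^2)$ with a $d$-dependent constant. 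Summed over the $O(n^3)$ vertex-sharing configurations, this contributes at most $O(n)$ to $\expect[N^2]-\expect[N]^2$, and Chebyshev's inequality then gives $\prob{N\ge\expect[N]/2}\ge 1-O(1/n)$.

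To pass to vertex-disjoint orbits, let $H$ be the graph whose vertices are the $N$ augmenting $2$-orbits and whose edges join pairs of orbits sharing a vertex, and let $M=|E(H)|$. The same second-moment computation gives $\expect[M]\le Cn^3\sigma^{2d}=O(n)$, so $M\le 2\expect[M]$ with probability at least $1/2$ by Markov. By the union bound, both $N\ge\expect[N]/2$ and $M\le 2\expect[M]$ hold with probability at least $1/2-O(1/n)$, and on this event the Caro--Wei/Tur\'an bound
\[
\alpha(H)\;\ge\;\frac{N^2}{N+2M}\;=\;\Omega(n)
\]
produces $\Omega(n)$ pairwise vertex-disjoint augmenting $2$-orbits, proving the lemma with some constant $\delta_0(\sigma_0,d)>0$.

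The main obstacle will be the ratio estimate $\expect[q^2]=O(p^2)$: the conditional probability $q(x,z)$ carries a drift from $Z_i$ of the form $\iprod{X_j-X_i}{Z_i}/\|X_j-X_i\|$, which is of the same order as the main term $\|X_j-X_i\|/(\sigma\sqrt{2})$ on the small-ball event dominating $p$. Checking that the square of $q$ does not pick up a factor growing with $n$ is a somewhat delicate but routine Gaussian integration, which remains uniform in $\sigma_0\in(0,1/2)$ for each fixed $d$ since the integrand is smooth and all integrals are absolutely convergent.
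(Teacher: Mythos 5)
Your proposal follows essentially the same route as the paper's proof: a second-moment/Chebyshev bound on the count of augmenting $2$-orbits, a Markov bound on the number of vertex-sharing pairs, and Tur\'an (Caro--Wei) to extract an independent set of size $|V|^2/(2|E|+|V|)=\Omega(n)$, with identical probability bookkeeping $1/2-O(1/n)$. The only real difference is that the paper imports the two per-pair estimates from \cite{kunisky2022strong} (Prop.~4.3 for $p=\prob{I_{ij}=1}\gtrsim_d (1+\sigma^{-2})^{-d/2}$ and Prop.~4.5 for $\expect[I_{ij}I_{ik}]\lesssim_d (1+\tfrac{3}{4\sigma^2})^{-d}$), whereas you re-derive them: your small-ball lower bound $p\ge c_d'\sigma^d$ is correct and complete, while the second-moment step is only sketched, and the intermediate factorization $q(x,z)\le C_d e^{-\|x\|^2/2}\sigma^d g(z)$ is not literally right as stated --- localizing $X_j$ to a ball of radius $O(\sigma(1+\|z\|))$ around $x$ produces a cross term of the form $e^{c\sigma(1+\|z\|)\|x\|}$ that does not factor into a function of $z$ times $e^{-\|x\|^2/2}$. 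The needed conclusion $\expect[q(X_i,Z_i)^2]=O_d(\sigma^{2d})$ is nevertheless true (it is exactly \cite[Prop.~4.5]{kunisky2022strong}) and your localization argument goes through if you keep the cross term and integrate it against the Gaussian densities of $X_i$ and $Z_i$ (using $\sigma\le\sigma_0<1/2$), so this is an incompletely executed step rather than a flaw in the approach; alternatively you could simply cite the reference, as the paper does.
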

This lemma is proved in~\cite[Section 4]{kunisky2022strong} using the so-called concentration-enhanced second-moment method. 
For completeness, here we provide a much simpler proof via the vanilla second-moment method combined with Tur\'an's theorem.
\begin{proof}
Let $I_{ij}$ denote the indicator that $(i,j)$ is an augmenting $2$-orbit
and $I=\sum_{i<j} I_{ij}$. 
To extract a collection of vertex-disjoint augmenting $2$-orbits, we construct a graph $G=(V, E)$, where 
the vertices correspond to $(i,j)$ for which $I_{ij}=1$, and  $(i,j)$ and $(k,\ell)$ are connected
if $(i,j)$ and $(k,\ell)$ share a common vertex. By construction, any collection of vertex-disjoint $2$-orbits
corresponds to an independent set in $G$. 
By Tur\'an's theorem (see e.g.~\cite[Theorem 1, p.~95]{Alon2016}), 
there exists an independent set $S$ in $G$ of size at least 
$|V|^2 / ( 2|E| + |V|) $. 
It remains to bound $|V|$ from below and $|E|$ from above.

Note that $|V|=I = \sum_{i<j} I_{ij}$. For all $n$ sufficiently large, $\sigma^2 \le d/40$ and 
it follows from~\cite[Prop.\ 4.3]{kunisky2022strong} that 
$$
 p  \triangleq  \prob{I_{ij}=1}  \ge   \frac{1}{1000 \sqrt{d}}  \left( 1+ \frac{1}{\sigma^2} \right)^{-d/2}. 
$$
Therefore,
\begin{align}
\expect{I} = \sum_{i<j} \prob{I_{ij}=1}  \ge \binom{n}{2} \frac{1}{1000\sqrt{d}}  \left( 1+ \frac{1}{\sigma^2} \right)^{-d/2} . \label{eq:mean_I}
\end{align}
Under the assumption that $\sigma=\sigma_0 n^{-1/d}$, it follows that $\expect{I} \ge c_0(d, \sigma_0) n$ 
for some constant $c_0(d,\sigma_0)$ that only depends on $d$ and $\sigma_0$. 
Moreover, 
\begin{align*}
\var(I) &=\sum_{i<j, k<\ell} \Cov\left(I_{ij}, I_{k\ell} \right)  \\
& = \sum_{i<j} \var(I_{ij}) + \sum_{i<j} \sum_{k: k\neq i,j}  \left( \Cov\left(I_{ij}, I_{ik} \right) + \Cov\left(I_{ij}, I_{jk} \right) \right)  \\
& \le \sum_{i<j} \expect{ I^2_{ij} } + \sum_{i<j} \sum_{k: k\neq i,j} \left( \expect{I_{ij}I_{ik}} + \expect{I_{ij}I_{jk}}\right),
\end{align*}
where the second equality holds because $I_{ij}$ and $I_{k\ell}$ are independent when 
$\{i,j\} \cap \{k,\ell\} =\emptyset$. 
Recall that $\expect{ I^2_{ij} }=\expect{ I_{ij}} = p$. 
Moreover, it follows from~\cite[Prop.\ 4.5]{kunisky2022strong} that 
$$
\expect{I_{ij}I_{ik}} \le 
\left(1+ \frac{3}{4\sigma^2} \right)^{-d}. 
$$
Combining the last three displayed equation yields that 
\begin{align}
\var(I)  \le \expect{I} + n^3 \left(1+ \frac{3}{4\sigma^2} \right)^{-d}.  \label{eq:variance_I}
\end{align}
Under the assumption that $\sigma=\sigma_0 n^{-1/d}$, 
it follows that $\var(I) \leq \expect{I}+c_1(d,\sigma_0)n $ 
for some $c_1(d,\sigma_0)$ that only depends on $d$ and $\sigma_0$. 
By Chebyshev's inequality, 
$$
\prob{ I \le \frac{1}{2} \expect{I} } \le \frac{4 \var(I) }{ \left(\expect{I}\right)^2  } 
\le \frac{4 (c_0+c_1)}{ c_0^2 n}. 
$$
Moreover, 
$$
|E| = \sum_{i<j} \sum_{k: k\neq i,j}  \left( I_{ij} I_{ik} + I_{ij} I_{jk} \right) 
$$
and hence
$$
\expect{|E|} = \sum_{i<j} \sum_{k: k\neq i,j} \left( \expect{I_{ij}I_{ik}} + \expect{I_{ij}I_{jk}}\right)
\le n^3\left(1+ \frac{3}{4\sigma^2} \right)^{-d}  \le  c_1(d, \sigma_0) n.
$$
By Markov's inequality, $|E| \le 2 \expect{|E|}$ with probability at least $1/2$. 
Therefore, with probability at least $1/2- 4 c_1 / (c_0^2 n)$, 
$$
|S| \ge \frac{ |V|^2 }{  2|E| + |V| } \ge  \frac{ \left(\expect{I}\right)^2/4 }{ 4 \expect{|E|} + \expect{I}/2 }
\ge \frac{ c_0^2 n^2 /4 }{ 4 c_1 n + c_0 n /2} \ge \delta_0 n,
$$
for some constant $\delta_0(d,\sigma_0)$ that only depends on $d$ and $\sigma_0$. 
\end{proof}
\begin{proof}[Proof of \prettyref{lmm:Pibad}]
By \prettyref{lmm:aug_cycle}, from $\delta_0 n$ such vertex-disjoint augmenting $2$-orbits, we choose $\delta_0 n/2$ many of them and  form a union of 
augmenting $2$-orbits with the total length $\delta_0 n /2 \times 2 = \delta_0 n$. There are $\binom{\delta_0 n}{\delta_0 n/2} $ many different unions,
and each of such union corresponds to a permutation $\Pi$ with $\diff(\Pi, \Pi^*)=\delta_0 n$ and $L(\Pi)\ge L(\Pi^*)$ in view of~\prettyref{eq:L_diff_decomp}. 
Therefore, for any $\delta \le \delta_0$, 
$$
\frac{\mu_{X,Y}(\Pibad)}{\mu_{X,Y}(\Pi^*)} \geq \binom{\delta_0 n}{\delta_0 n/2} \geq 2^{\delta_0 n/2}. 
$$
\end{proof}

\subsection{Impossibility of perfect recovery}
In this section, we prove an impossibility condition of perfect recovery. 
\begin{theorem}\label{thm:exact_nec}
Suppose that $\sigma^2 \le d/40$ and
\begin{align}
\frac{d}{4} \log \left( 1+ \frac{1}{\sigma^2} \right) -  \log n +  \log d \le C, \label{eq:exact_impossibility}
\end{align}
for a constant $C>0$. Then there exists a constant $c$ that only depends on $C$ such that for any estimator $\hat{\pi}$, 
$\prob{\hat{\pi} \neq \pi^*} \ge c$. 
\end{theorem}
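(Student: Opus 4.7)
The MLE $\hat{\pi}_{\mathrm{ML}}$ from~\prettyref{eq:LAP} minimizes the Bayes error under the uniform prior on $\pi^*$, so it suffices to show $\Prob[\hat{\pi}_{\mathrm{ML}} \neq \pi^*] \geq c$ for some constant $c = c(C) > 0$. Call an unordered pair $\{i,j\}$ an \emph{augmenting $2$-orbit} if the permutation $\pi^*_{ij}$ obtained from $\pi^*$ by swapping its values at $i$ and $j$ satisfies $L(\pi^*_{ij}) \geq L(\pi^*)$; equivalently, by the orbit decomposition~\prettyref{eq:L_diff_decomp}, $\Delta((i,j)) \geq 0$. A single augmenting $2$-orbit already forces $\hat{\pi}_{\mathrm{ML}} \neq \pi^*$, so writing $I_{ij} = \indc{\Delta((i,j)) \geq 0}$ and $I = \sum_{i<j} I_{ij}$, it suffices to prove $\Prob[I \geq 1] \geq c$.

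My strategy is the Paley--Zygmund inequality $\Prob[I \geq 1] \geq (\expect[I])^2/\expect[I^2]$, using the moment estimates developed in the proof of~\prettyref{lmm:aug_cycle}. The per-pair probability is lower bounded by $p = \expect[I_{ij}] \geq (1000\sqrt{d})^{-1}(1+1/\sigma^2)^{-d/2}$ via~\cite[Prop.~4.3]{kunisky2022strong}, while the shared-vertex correlation obeys $\expect[I_{12} I_{13}] \leq (1+3/(4\sigma^2))^{-d}$ via~\cite[Prop.~4.5]{kunisky2022strong} (a direct MGF calculation). Partitioning $\expect[I^2]$ by the pair overlap and using independence of disjoint pairs,
\[
\expect[I^2] \leq \expect[I] + (\expect[I])^2 + n^3(1+3/(4\sigma^2))^{-d}.
\]
Plugging~\prettyref{eq:exact_impossibility} into the first-moment bound gives $\expect[I] \geq e^{-2C} d^{3/2}/2000 \geq c_1(C) > 0$, which absorbs the first two summands against $(\expect[I])^2$.

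The substantive obstacle is absorbing the shared-vertex term $n^3(1+3/(4\sigma^2))^{-d}$ into $(\expect[I])^2 \gtrsim n^4(1+1/\sigma^2)^{-d}/d$; after rearrangement this is equivalent to bounding $\frac{d}{n}\bigl(1+\frac{1}{4\sigma^2+3}\bigr)^d$ by a constant. The key algebraic inequality I will prove is
\[
\left(1+\frac{1}{4\sigma^2+3}\right)^4 \leq 1 + \frac{1}{\sigma^2}, \qquad \sigma > 0,
\]
which via the substitution $u = 1/(4\sigma^2+3) \in (0, 1/3]$ reduces to the elementary identity $(1+u)^3(1-3u) = 1 - 6u^2 - 8u^3 - 3u^4 \leq 1$. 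Raising both sides to the $(d/4)$-th power and invoking~\prettyref{eq:exact_impossibility} then yields
\[
\left(1+\frac{1}{4\sigma^2+3}\right)^d \leq \left(1+\frac{1}{\sigma^2}\right)^{d/4} \leq e^C \cdot \frac{n}{d},
\]
so the shared-vertex contribution is at most $O(e^C) \cdot (\expect[I])^2$. Collecting the three bounds gives $\expect[I^2] = O((\expect[I])^2)$, and Paley--Zygmund delivers $\Prob[I \geq 1] \geq c(C)$, as required.
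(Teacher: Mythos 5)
Your proposal is correct and follows essentially the same route as the paper: reduce to the MLE under the uniform prior, count augmenting $2$-orbits, apply the second-moment (Paley--Zygmund) method with the first-moment bound from \cite[Prop.~4.3]{kunisky2022strong} and the shared-vertex correlation bound from \cite[Prop.~4.5]{kunisky2022strong}, and absorb the cross term using the comparison $\bigl(\tfrac{1+1/\sigma^2}{1+3/(4\sigma^2)}\bigr)^d \le (1+1/\sigma^2)^{d/4}$ together with \prettyref{eq:exact_impossibility}. Your polynomial-expansion proof of that comparison is just an equivalent rewriting of the paper's inequality $1+3x/4 \ge (1+x)^{3/4}$, so there is no substantive difference.
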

\prettyref{thm:exact_nec} immediately implies that if there exists an estimator that achieves perfect recovery with high probability, then 
\begin{align}
\frac{d}{4} \log \left( 1+ \frac{1}{\sigma^2} \right) -  \log n +  \log d \to +\infty. 
\label{eq:exact_nec}
\end{align}
In comparison, it is shown in~\cite[Theorem 1]{dai2019database} that perfect recovery is possible if $\frac{d}{4} \log \left( 1+ \frac{1}{\sigma^2} \right) -  \log n \to +\infty$. Thus our necessary condition agrees with their sufficient condition up to an additive $\log d$ factor.
Our necessary condition~\prettyref{eq:exact_nec} further specializes to 
\begin{itemize}
\item $d \ll \log n$: 
\begin{align*}
\sigma \le 
\begin{cases} 
o(n^{-2/d}) & \text{ if } d=O(1) \\
n^{-2/d} & \text{ if } d \gg 1
\end{cases}.
\end{align*}
This yields \prettyref{thm:opt}\eqref{opt1} and slightly improves over the necessary condition of MLE in~\cite[Theorem 1.1]{kunisky2022strong}, that is, $\sigma =O(n^{-2/d})$.
\item $d=\Theta(\log n)$:
$$
\sigma \le \frac{1}{\sqrt{n^{4/d} -1 }};
$$
\item $d \gg \log n$: 
$$
\sigma \le  \sqrt{ \frac{d}{4\log (n/d) +\omega(1) } }.
$$
\end{itemize}
Note that the previous work~\cite{dai2019database}  shows that 
$\frac{d}{4} \log \left( 1+ \frac{1}{\sigma^2} \right) \ge (1-\Omega(1)) \log n$ is necessary for perfect recovery, under the additional assumption that $1 \ll d =O(\log n)$. The analysis therein  is based on showing the existence of an augmenting $2$-orbit via the second-moment method. 
We follow a similar strategy, but our first and second moment estimates are sharper and thus yield a tighter condition. 

\begin{proof}
Recall that $I_{ij}$ denote the indicator that $(i,j)$ is an augmenting $2$-orbit
and $I=\sum_{i<j} I_{ij}$. 
For the purpose of lower bound, consider the Bayesian setting where $\pi^*$ is drawn uniformly at
random. Then the MLE $\hat{\pi}_{\rm ML}$ given in~\prettyref{eq:LAP} minimizes the probability of error.
Hence,
it suffices to bound from below $\prob{\hat{\pi}_{\rm ML} \neq \pi^*}$. Note that on the event $\{I >0\}$, 
there exists at least one permutation $\pi \neq \pi^*$ whose likelihood is at least as large as that of $\pi^*$ and hence
the error probability of MLE is at least $1/2$. Therefore, 
$$
\prob{\hat{\pi}_{\rm ML} \neq \pi^*}  \ge \frac{1}{2} \prob{I >0}.
$$
It remains to bound $\prob{I >0}$ from below. To this end, we first bound $ \var(I)/ \left(\expect{I}\right)^2 $. 
In view of~\prettyref{eq:variance_I}, 
$$
 \frac{\var(I) }{\left(\expect{I}\right)^2 } \le \frac{1}{\expect{I}} + \frac{1}{\left(\expect{I}\right)^2 } n^3 \left(1+ \frac{3}{4\sigma^2} \right)^{-d}.
$$ 
By assumption $\sigma^2 \le d/40$ and~\prettyref{eq:exact_impossibility}, 
it follows from~\prettyref{eq:mean_I} that
$$
\expect{I}  \gtrsim  \frac{n^2}{\sqrt{d}}  \left( 1+ \frac{1}{\sigma^2} \right)^{-d/2}  \ge \exp \left( \frac{3}{2} \log d -2 C\right)
\ge \exp \left( -2 C\right). 
$$
Moreover, 
$$
\frac{1}{\left(\expect{I}\right)^2 } n^3 \left(1+ \frac{3}{4\sigma^2} \right)^{-d}
\lesssim \frac{d}{n} \left( \frac{1+1/\sigma^2}{1+3/(4\sigma^2)} \right)^d
\overset{(a)}{\le} \frac{d}{n} \left( 1+ \frac{1}{\sigma^2} \right)^{d/4} \overset{(b)}{\le} e^C,
$$
where $(a)$ holds because $1+3x/4 \ge (1+x)^{3/4}$ for all $x \ge 0$
and $(b)$ holds due to assumption~\prettyref{eq:exact_impossibility},
Combining the last three displayed equation yields that $ \var(I)/ \left(\expect{I}\right)^2  \le c_0$ for some constant $c_0$
that only depends on $C$. 
By the Paley-Zygmund inequality, 
$$
\prob{I >0 } \ge \prob{I \ge \frac{1}{2} \expect{I}} \ge \frac{\left(\expect{I}\right)^2 }{4 \left( \var(I) + \left(\expect{I}\right)^2 \right) }  
\ge \frac{1}{4c_0+1}. 
$$
\end{proof}

\section{Recovery thresholds in the nonisotropic case}
\label{app:nonisotropic}

In this section we argue that \prettyref{thm:main} continues to hold under the same conditions in the nonisotropic case of $X_i\iiddistr \calN(0,\Sigma)$, provided that $ \Sigma \succ c I $ for some absolute constant $ c>0 $.
In the general nonisotropic case, we denote by $ p(\Sigma,\sigma,\Pi,Q) $ the moment generating function given by \prettyref{eq:MGF} to highlight the dependency on the covariance matrix $ \Sigma $ and the noise level $ \sigma $. As in the proof of Lemma \ref{lem:Estimate_MGF}, recall $ x=\vecc(X) $ denotes the vectorization of $ X $. Since $ X_i \iiddistr \calN(0,\Sigma) $, the vector $ x \in \R^{nd} $ has distribution $ x \sim \calN(0,I_n \otimes \Sigma) $. Note that $ I_n \otimes \Sigma \succ c I_{nd} $.
Modifying \prettyref{eq:MGF_Determinant} accordingly, we have
\begin{multline*}
p(\Sigma,\sigma,\Pi,Q) = \expect \exp \pth{ -\frac{1}{32 \sigma^2} x^\top H^\top H x } = \qth{ \det \pth{I+ \frac{1}{16 \sigma^2} H^\top H (I_n \otimes \Sigma)} }^{-\frac{1}{2}}\\
\leq \qth{ \det \pth{I+ \frac{c}{16 \sigma^2} H^\top H} }^{-\frac{1}{2}} = p(I,\sigma',\Pi,Q), 
\end{multline*}
where $ H=I_{nd} - Q^\top \otimes \Pi $ and $ \sigma'=\sigma/\sqrt{c} $. This shows that the MGF $ p(\Sigma,\sigma,\Pi,Q) $ satisfies the same estimates \prettyref{eq:ak_Estimate}, \prettyref{eq:a1_Estimate} and Lemma \ref{lem:Estimate_MGF} for the isotropic case with the original noise $ \sigma $ replaced by a constant multiple of it $ \sigma' $. This constant multiplicative factor keeps $ \sigma' $ satisfying the same noise threshold in Theorem \ref{thm:main}, which implies both prefect recovery and almost perfect recovery can still be achieved for the nonisotropic case under the same conditions, hence confirming our claim in \prettyref{sec:discuss}.

\section*{Acknowledgment}
The authors are grateful to Zhou Fan, Cheng Mao, and Dana Yang for helpful discussions.

Y.~Wu is supported in part by the NSF Grant CCF-1900507, an NSF CAREER award CCF-1651588, and an Alfred Sloan fellowship.
J.~Xu is supported in part by the NSF Grant CCF-1856424 and an NSF CAREER award CCF-2144593.

\bibliography{matching}
\bibliographystyle{alpha}
\end{document}